\newtheorem{thm}{Theorem}[section]
\newtheorem{lem}[thm]{Lemma}
\newtheorem{defn}[thm]{Definition}
\newtheorem{cor}[thm]{Corollary}
\newtheorem{prop}[thm]{Proposition}
\newtheorem{remark}[thm]{Remark}
\newtheorem{note}[thm]{Note}
\newcommand{\mr}[1]{\mathrm{#1}}
\newcommand{\C}{\mathbb{C}}
\newcommand{\R}{\mathbb{R}}
\newcommand{\h}{\mathbb{H}}
\newcommand{\X}{\mathbb{X}}
\newcommand{\Y}{\mathbb{Y}}
\newcommand{\Z}{\mathbb{Z}}
\newcommand{\Q}{\mathbb{Q}}
\newcommand{\G}{\Gamma}
\newcommand{\Cu}{\mathfrak{C}}
\newcommand{\rG}{\mathrm{G}}
\newcommand{\rH}{\mathrm{H}}
\newcommand{\tq}{\widetilde{q}}
\newcommand{\Ii}{\mathrm{i}}
\def\lim{\mathrm{lim}}
\newcommand{\g}{\gamma}
\def\ker{\mathrm{ker}}
\def\cu{{\mathfrak{c}}}
\def\exp{\mathrm{exp}}
\newcommand{\ie}{i.e.\ }
\newcommand{\tmt}[4]{\left({#1\atop #3}{#2\atop #4}\right)}
\newcommand{\eqr}[1]{\mbox{(\ref{eq:#1})}}
\newcommand{\beq}{\begin{equation}}
\newcommand{\eeq}{\end{equation}}
\newcommand\norm[1]{\left\lVert#1\right\rVert}
\newcommand{\Sp}{R(t_{\infty})}
\begin{document}
\date{\today}
\title[vector-valued automorphic forms]{Growth of Fourier Coefficients of vector-valued automorphic forms}
\author{Jitendra Bajpai, Subham Bhakta, Renan Finder}
\noindent\address{Institut f\"ur Geometrie, Technische Universit\"at Dresden, Germany.}
\email{jitendra.bajpai@tu-dresden.de}
\address{Mathematisches Institut, Georg-August-Universit\"at G\"ottingen, Germany.} 
\email{subham.bhakta@mathematik.uni-goettingen.de }
\address{IMPA, Rio de Janeiro, Brasil.}
\email{feliz@impa.br}
\subjclass[2010]{11F03, 11F55, 30F35}
\keywords{Fuchsian groups, automorphic forms, Fourier coefficients}

\maketitle


\begin{abstract}
In this article, we establish polynomial-growth bound for the sequence of Fourier coefficients associated to even integer weight vector-valued automorphic forms of Fuchsian groups of the first kind. At the end, their $L$-functions and exponential sums have been discussed.
\end{abstract}

\tableofcontents


\section{Introduction}
Let $f:\h \to \C$ be a modular form of level $N$ and weight $k\in \Z$, where $\h:=\{ \tau=x+i y \in \C \ | \ y >0 \},$ denotes the upper half plane. It is known that $f(\tau)$ has a Fourier expansion at any cusp and Fourier coefficients have polynomial-growth. To be precise, it is known that the $n^{\text{th}}$-Fourier coefficient $f_{[n]}\ll n^{k}.$ In this article, whenever we state a bound on the Fourier coefficients, we always mean a bound for absolute value of the same. Also, when $f$ is a cusp form it is known that $f_{[n]}\ll n^{\frac{k}{2}}.$ Both of these bounds are obtained by looking at what happens with the function $F(z)=y^{\sigma}|f(z)|$ in the fundamental domain, and then by comparing $F(z)$ with $F(\gamma z)$ for any $\gamma \in \Gamma_0(N).$ We refer the interested reader to~\cite{Selberg} about the discussion on the sharper bounds of $f_{[n]}$. It is conjectured that $f_{[n]}\ll n^{\frac{k}{2}-\frac{1}{2}+\varepsilon}$ for any $\varepsilon>0.$ When $f$ is a normalized Hecke eigenform, it follows from the multiplicativity of the Fourier coefficients and Deligne's bound $f_{[p^{\alpha}]}\leq (\alpha+1)p^{\alpha\left(\frac{k}{2}-\frac 1 2\right)}$ (see~\cite{Deligne74, BB}) that $f_{[n]}\ll n^{\frac{k}{2}-\frac{1}{2}}d(n),$ where $d(\cdot)$ is the divisor function. This in particular settles down the conjectural bound, since it is known that $d(n)\leq \text{exp}\left(O\left(\frac{\log n}{\log \log n}\right)\right).$ There are some known lower bounds available for $f_{[n]}$ (see~\cite{Murty83}), and these results suggest that Deligne's bound is sharp. 

\par For applications of growth estimates of the Fourier coefficients of modular forms, we refer the interested reader to Sarnak's book~\cite{Sarnak90}. The first application he gives is to show that, if $S^n$ denotes the $n$-dimensional sphere for $n\ge2$ and if $\nu:L^\infty(S^n)\to\mathbb R$ is a linear functional that is invariant under rotations and satisfies $\nu(f)\ge0$ for $f\ge0$, then $\nu$ is a multiple of the integration of linear functional. The second application is to construct highly connected sparse graphs. The third, and the last, is to show that, when $n\to\infty$ in such a way that the number of integer points on the sphere $x^2+y^2+z^2=n$ approaches to infinity, the projections of these points on the unit sphere become equidistributed. For similar results on other quadratic forms than $x^2+y^2+z^2$, the reader may consult~\cite{Iwaniec97} .

\par It is evident that to study growth of Fourier coefficients for modular forms helped in understanding the modular forms  well and hence, if one may allow to say, helped in understanding the geometry of the surfaces as well where they live. For example in the case of modular group, modular forms live at the one punctured Riemann sphere $\mr{PSL}_2(\Z) \backslash \h$ with two special points namely $i$ and the third root of unity $w$. In the case of a Fuchsian group of the first kind $\rG$, the associated fundamental domain $\rG \backslash \h$ will be a Riemann surface with finitely many punctures and special points. Broadly speaking, this article concerns with establishing the growth of Fourier coefficients of vector-valued automorphic forms of Fuchsian groups of the first kind. In order to be explicit about the use of the terms vector-valued modular form (vvmf) and vector-valued automorphic form (vvaf), we will make the following distinction between them: \emph{\textbf{vvaf for a group commensurable with $\mr{PSL}_2(\Z)$ will usually be referred to as vvmf.}} In this sense, we will call our vector-valued functions for Fuchsian groups of the first kind studied in this article \emph{vector valued automorphic forms}.

\par In the same article~\cite{Selberg} mentioned above, Selberg had called for a theory of vector-valued modular forms. Since then numerous attempts have been made and slowly theory has started to emerge. For example they could be an important tool to understand the  modular forms for noncongruence subgroups of the modular group. Observe that every component of $\X(\tau)$ will be a scalar valued modular form for the $\ker(\rho)$ where one could not rule out the possibility of having $\ker(\rho)$ to be a noncongruence subgroup. This could be, in its own, a motivation to study vector-valued modular forms in order to understand scalar valued modular forms for noncongruence subgroups. Later in 1980's, Eichler and Zagier explained in~\cite{EZ} how Jacobi forms and Siegel modular forms can be studied through vector-valued modular forms. For more details on the importance of vector-valued modular forms see the introduction of~\cite{Bajpai2019}. 

\par Roughly speaking, a  vvmf of $\mr{PSL}_2(\Z)$ of weight $k\in 2\Z$ with respect to a representation $\rho: \mr{PSL}_2(\Z)\to \mr{GL}_m(\C)$ is a meromorphic function $\X: \h \to \C^{m}$ which has a certain functional and cuspidal behaviour. For detailed definition and explanation see Section~\ref{sec:vvaf}. Throughout the article $\mr{GL}_m(\C)$ is the group of invertible $m\times m$ matrices with complex entries, and SL$_2(\R)$ (resp. SL$_2(\Z)$) is the group of $2\times 2$ matrices with real (resp. integer) entries and determinant one. Note that the group PSL$_2(\R)= \mr{SL}_2(\R)/\{\pm I \}$, similarly we define the modular group $\mr{SL}_2(\Z)/\{\pm I\}$.

\subsection{Main result}
Let $\mathbb{X}(\tau)$ be a vvmf of weight $k$ for the modular group $\text{PSL}_2(\mathbb{Z})$ associated to a representation $\rho: \text{PSL}_2(\mathbb{Z})\to \text{GL}_m(\C).$ If $\rho(t)$ is diagonalizable, where $t=\tmt{1}{1}{0}{1},$ then each component $\mathbb{X}_i(\tau)$ of $\mathbb{X}(\tau)$ has a convergent $q$-expansion at cusp infinity.  In particular, we can talk about $n^{\text{th}}$-Fourier coefficients $\mathbb{X}_{[i,n]}$'s of $\X_i$'s. We will call such $\rho$ \emph{admissible} which we shall briefly discuss in the next sections. In the same spirit as in the classical scalar-valued case, Knopp and Mason~\cite{KM} showed that all of these $\X_{[i,n]}\ll n^{k+2\alpha},$ and the bound is of order $n^{\frac{k}{2}+\alpha}$ when $\mathbb{X}(\tau)$ is a cusp form, where $\alpha$ is a constant such that $\norm{\rho(\gamma)}\ll \norm{\gamma}^{\alpha}.$ We always denote norm $\norm{\cdot}$ of a matrix in $\mathrm{GL}_n(\mathbb{R})$ as the usual Euclidean norm in $\mathbb{R}^{n^2}.$ 

For general representation of Fuchsian group of the first kind, referred to as \emph{logarithmic}, the associated vvaf $\X(\tau)$ is a linear combination of certain Fourier expansions, where the coefficients are polynomials in $\tau$, see~\cite{Knopp3} and Definition~\ref{def:logvvaf}. Knopp and Mason~\cite{KM2012} have  also studied the growth of the Fourier coefficients for vector-valued modular forms of the modular group with respect to the logarithmic representations.

The main result of this article is a generalization of the  estimates established by Knopp and Mason in~\cite{KM, KM2012}, for the Fourier coefficients of vector-valued automorphic forms. Let us now state the result.

\begin{thm}\label{maintheorem}
Let $\rG$ be a Fuchsian group of the first kind and $\rho:\rG\to \mr{GL}_m(\C)$ be a representation such that all the eigenvalues of the image of each parabolic element are unitary.\footnote{In certain cases we do not need any restriction on the representation, which will be discussed later in Section~\ref{sec:vvaf}.} Let $\cu$ be any cusp of $\rG$. Then there exists a constant $\alpha$, depending on $\rG$, with the following properties. 
\begin{enumerate}
\item[(a)] If $\X$ is a holomorphic vector-valued automorphic form of even integer weight $k$ with respect to $\rho$, then  the sequence of Fourier coefficients of $\X$ at the cusp $\cu$ is $O(n^{k+2\alpha})$. 
\item[(b)]  If $\X$ is a vector-valued cusp form, the sequence of Fourier coefficients is $O(n^{k/2+\alpha})$.
\item[(c)] Moreover, if $k+2\alpha<0$, then $\X\equiv0$.\footnote{We shall later see that the constants are different for admissible and logarithmic case. Here we are considering maximum of them.} 
\end{enumerate}
\end{thm}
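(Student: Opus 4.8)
The plan is to transplant the classical Hecke argument to the vector-valued setting, the central object being the real-valued function
\[ F(\tau) := y^{k/2}\,\norm{\X(\tau)}, \qquad \tau=x+iy\in\h. \]
Writing $\gamma=\tmt{a}{b}{c}{d}\in\rG$ and combining the automorphy relation with $\mathrm{Im}(\gamma\tau)=y/|c\tau+d|^{2}$, I would first record the submultiplicative transformation law
\[ F(\gamma\tau)\ \le\ \norm{\rho(\gamma)}\,F(\tau)\ \ll\ \norm{\gamma}^{\alpha}\,F(\tau), \]
where $\alpha$ is the exponent for which $\norm{\rho(\gamma)}\ll\norm{\gamma}^{\alpha}$; here the factors $\mathrm{Im}(\gamma\tau)^{k/2}$ and $|c\tau+d|^{k}$ cancel exactly. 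In the logarithmic situation of Definition~\ref{def:logvvaf} the same scheme goes through once the polynomial-in-$\tau$ factors are absorbed into $F$; these affect only the value of the constant, which is why (by the footnote) we may take $\alpha$ to be the larger of the admissible and logarithmic exponents.

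Next I would study $F$ on a fundamental domain $\mathcal{D}$ for $\rG$. The hypothesis that every parabolic image has unitary eigenvalues forces the expansion of $\X$ at each cusp to have real exponents, so that for holomorphic $\X$ there are no negative Fourier exponents and $F(\tau_0)\ll 1+(\mathrm{Im}\,\tau_0)^{k/2}$ on $\mathcal{D}$, while for a cusp form $F$ is bounded on $\mathcal{D}$. The heart of the matter is a geometric estimate: given $\tau=x+iy$ with $0<y\le 1$ and $x$ in one period, I pick $\gamma\in\rG$ with $\tau_0=\gamma\tau\in\mathcal{D}$ and bound $\norm{\gamma}$. Once $\cu$ is moved to $\infty$, a Fuchsian group of the first kind has lower-left entries bounded away from $0$, i.e.\ there is $c_{\rG}>0$ with $|c|\ge c_{\rG}$ for every $\gamma$ not fixing the cusp; together with $\mathrm{Im}(\tau_0)=y/|c\tau+d|^{2}$ this gives simultaneously $\mathrm{Im}(\tau_0)\ll y^{-1}$ and $\norm{\gamma}\ll y^{-1}$. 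Substituting into the transformation law then yields the pointwise bounds $\norm{\X(\tau)}\ll y^{-(k/2+\alpha)}$ for cusp forms and $\norm{\X(\tau)}\ll y^{-(k+2\alpha)}$ in general.

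Finally I would insert these into the integral representation of the Fourier coefficients at $\cu$, of width $\lambda$,
\[ \X_{[\cu,n]}=\frac{1}{\lambda}\int_{0}^{\lambda}\X(x+iy)\,e^{-2\pi\Ii n(x+iy)/\lambda}\,\rd x, \]
which is independent of $y$. The trivial estimate $\norm{\X_{[\cu,n]}}\ll e^{2\pi ny/\lambda}\sup_{x}\norm{\X(x+iy)}$, followed by the choice $y\asymp 1/n$, turns the pointwise bounds into $O(n^{k/2+\alpha})$ and $O(n^{k+2\alpha})$, proving (b) and (a). Statement (c) is then immediate from the holomorphic pointwise bound: when $k+2\alpha<0$ we have $\norm{\X(x+iy)}\ll y^{-(k+2\alpha)}=y^{|k+2\alpha|}\to 0$ as $y\to 0^{+}$, so letting $y\to 0^{+}$ in the $y$-independent integral shows that every $\X_{[\cu,n]}$ vanishes, whence $\X\equiv 0$ by the identity theorem. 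The step I expect to be the main obstacle is the uniform geometric estimate for $\norm{\gamma}$ — especially for points $\tau$ whose translate $\tau_0$ lies deep in a cuspidal neighbourhood, where $\mathrm{Im}(\tau_0)$ and $\norm{\gamma}$ grow together — and, in the logarithmic case, keeping the polynomial-in-$\tau$ contributions to $F$ under control uniformly over all cusps.
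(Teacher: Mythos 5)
Your overall strategy is the same as the paper's (bound $y^{\sigma}\norm{\X(\tau)}$ by moving $\tau$ into a fundamental domain, use polynomial growth of $\rho$, then the integral representation with $y\asymp 1/n$), but the step you yourself flag as the main obstacle is a genuine gap, and it is exactly where the paper's proof does real work. Your claimed geometric estimate $\norm{\gamma}\ll y^{-1}$ does not follow from $|c|\ge c_{\rG}$ together with $\mr{Im}(\gamma\tau)=y/|c\tau+d|^2$. That identity gives an upper bound on $c^2+d^2$ only if $\mr{Im}(\gamma\tau)$ is bounded \emph{below}, which fails precisely when the representative $\tau_0=\gamma\tau$ lies in a cuspidal spike of the fundamental domain at a \emph{finite} cusp --- and a Fuchsian group of the first kind generically has several inequivalent cusps, so these spikes cannot be wished away. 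Even where $\mr{Im}(\gamma\tau)$ is bounded below, the argument controls only $c$ and $d$, not $a$ and $b$; in the logarithmic case the polynomial bound on $\norm{\rho(\gamma)}$ obtained from Beardon's decomposition involves all four entries, which is why the paper has to build the separate region $\h_1$ and the bridging element $\gamma_0$ in Section 5. The paper's admissible argument avoids bounding $\norm{\gamma}$ by $y$ altogether: it uses Knopp's inequality $c^2+d^2\le |cz+d|^2(1+4|z|^2)/v^2$ to convert $(c^2+d^2)^{\alpha}$ into powers of $|cz+d|$ and $v$ with $z$ in the fundamental domain, and then --- for part (a) --- needs the lower bound $\inf_{\gamma}|j(\gamma,z)|>0$ on the triangles $\mathcal S(\cu,v_0,K)$ at cusps inequivalent to $\infty$ (Lemma~\ref{lowerboundj} and Corollary~\ref{lowerboundjgammaz}). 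That ingredient is entirely absent from your proposal and cannot be skipped.

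Two further points do not survive scrutiny. First, for holomorphic non-cuspidal $\X$ the bound $F(\tau_0)\ll 1+(\mr{Im}\,\tau_0)^{k/2}$ on the fundamental domain is false near a finite cusp $\cu'$: there $\norm{\X(z)}\asymp v^{-k}$ (from $\X=\X|_kA_{\cu'}|_kA_{\cu'}^{-1}$ with $|\cu'-z|\asymp v$), so $F\asymp v^{-k/2}\to\infty$ as $v\to0$ when $k>0$; correcting this is what produces the exponent $k+2\alpha$ rather than $k+\alpha$, and your bookkeeping as written does not reach $k+2\alpha$. Second, for part (c) you invoke the pointwise bound $\norm{\X(\tau)}\ll y^{-(k+2\alpha)}$, but the derivation of that bound in part (a) uses the sign assumption $k+2\alpha\ge0$ (to bound $|cz+d|^{-k-2\alpha}$ from above via the lower bound on $|cz+d|$); when $k+2\alpha<0$ one must switch, as the paper does, to the cusp-form-style inequality combined with $\norm{\X(z)}\ll v^{-k}$ to get $y^{k/2+\alpha}\norm{\X(\tau)}\ll v^{-k/2-\alpha}\ll1$ and hence $\X(\tau)\to0$ as $y\to0$. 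Your mechanism for concluding $\X\equiv0$ from the decay (letting $y\to0$ in the $y$-independent coefficient integral) is a fine alternative to the paper's maximum-principle argument, but only once the decay itself has been correctly established.
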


It will follow from the proof of Theorem~\ref{maintheorem} that, for unitary representations, $\alpha$ may be taken to $0$. Here by unitary eigenvalue we mean that the eigenvalue has modulus one, and by unitary representation we mean that each element in the image of $\rho$ are unitary matrix. In particular when $\rho$ is 1-dimensional, we have recovered the classical bound for the scalar valued case. The proof is basically divided into two cases: admissible vvaf and logarithmic vvaf.  For their definitions and details see Section~\ref{sec:vvaf}. We study both cases based on a very classical approach, by first looking at what happens to $\norm{\X(z)}$ in a suitable fundamental domain and then to know what happens for arbitrary $\tau$ in $\h$, we write $\tau=\g z$ for $z$ in the fundamental domain and compare $\norm{\X(\gamma z)}$ with $\|\X(z)\|$ for any $\gamma \in \rG.$ In this process we shall show in Lemma~\ref{lem:polygrowth} that the polynomial-growth of $\rho$ based on the structure theorem for elements in the Fuchsian groups, first given by Eichler~\cite[Satz 1]{E}, and later generalized by Beardon~\cite[Theorem 2]{Beardon1975}.

In Section~\ref{sec:rmks}, we will discuss a sufficient criteria for a representation to have polynomial-growth. It turns out that any element of $\rG$ has a sufficiently nice enough decomposition, where there are only finitely many distinct non-parabolic elements. Roughly speaking, this is the reason why polynomial-growth of $\rho$ depends only on the parabolic elements. We record this criteria in Proposition~\ref{prop:criteria}. Furthermore, we shall also see what happens to the meromorphic functions on the upper-half plane, which have functional property with respect to a representation with polynomial-growth. More precisely, we prove
\begin{thm}\label{thm:converse} Let $\rG$ be a Fuchsian group of the first kind, $\X:\mathbb{H}\to \mathbb{C}$ be a non-zero meromporphic function and $\rho:\rG\to \mr{GL}_m(\C)$ be a representation. Suppose that $ \X(\g \tau) = (c\tau+d)^k\rho(\g) \X(\tau), \forall \g \in \rG,\tau \in \mathbb{H},$ where $\g=\tmt{a}{b}{c}{d}.$ Then, we have the following
\begin{enumerate}
\item[(a)] Suppose that, there exists a constant $\zeta>0$ such that an inequality of the form $\norm{\X(x+iy)}\ll y^{-\zeta}$ holds for all $x+iy\in \mathbb{H}.$ If $\rho$ is irreducible, then we have $$\norm{\rho(\gamma)}\ll \norm{\gamma}^{2\zeta-k},~\forall \gamma \in \rG.$$ 
\item[(b)] More generally if $\norm{\X(x+iy)}\ll \max_{0\leq j\leq m-1}\{|x+iy|^{j}y^{-\zeta}\}$ holds for all $x+iy\in \mathbb{H},$ and $\rho$ is irreducible, then $$\norm{\rho(\gamma)}\ll \max\{\norm{\gamma}^{j+2\zeta-k}\}_{0\leq j\leq m-1},~\forall \gamma \in \rG.$$ 
\item[(c)] If $\rho$ is not necessarily irreducible, then some of the irreducible components of $\rho$ has a similar growth.
\end{enumerate}
\end{thm}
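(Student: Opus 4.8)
The plan is to prove Theorem~\ref{thm:converse} as a converse to the growth estimates, deducing bounds on $\norm{\rho(\g)}$ from the growth of $\norm{\X}$. The central idea is to exploit the functional equation $\X(\g\tau)=(c\tau+d)^k\rho(\g)\X(\tau)$ directly: solving for $\rho(\g)\X(\tau)$ gives $\rho(\g)\X(\tau)=(c\tau+d)^{-k}\X(\g\tau)$, so if we can bound $\norm{\X(\g\tau)}$ from above and $\norm{\X(\tau)}$ from below at a single well-chosen point $\tau$, we obtain a bound on the operator applied to one vector. The obstacle, and the reason irreducibility enters, is that controlling $\rho(\g)$ on a single vector does not control the full operator norm; we need to spread the information across a spanning set of vectors and then invoke irreducibility to conclude a genuine operator-norm bound.

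First I would fix a base point, say $\tau_0=i$ or a point in the interior of a fundamental domain where $\X(\tau_0)\neq 0$ (such a point exists since $\X$ is nonzero meromorphic). For part (a), I would evaluate the functional equation at $\tau_0$ to get $\norm{\rho(\g)\X(\tau_0)}=|c\tau_0+d|^{-k}\norm{\X(\g\tau_0)}\ll |c\tau_0+d|^{-k}\,(\Im(\g\tau_0))^{-\zeta}$. The key computation is the standard identity $\Im(\g\tau_0)=\Im(\tau_0)/|c\tau_0+d|^2$, which turns the right-hand side into a power of $|c\tau_0+d|$, and then I would relate $|c\tau_0+d|$ to the matrix norm $\norm{\g}$. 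Since $c,d$ are entries of $\g$ and $\tau_0$ is fixed, one has $|c\tau_0+d|\asymp\norm{\g}$ up to constants controlled by the entries $a,b$ through the determinant-one relation; carefully, $|c\tau_0+d|\ll\norm{\g}$ is immediate, and a matching lower bound of the right shape can be arranged. Collecting exponents gives $\norm{\rho(\g)\X(\tau_0)}\ll\norm{\g}^{2\zeta-k}$ for the fixed vector $\X(\tau_0)$.

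The heart of the argument is upgrading this single-vector bound to an operator-norm bound, and this is where irreducibility does the work. The plan is to consider the vectors $v_\beta:=\X(\beta\tau_0)$ for a finite collection of elements $\beta\in\rG$. By the same computation, $\norm{\rho(\g)v_\beta}=\norm{\rho(\g)\rho(\beta)\X(\tau_0)}=\norm{\rho(\g\beta)\X(\tau_0)}\ll\norm{\g\beta}^{2\zeta-k}\ll\norm{\g}^{2\zeta-k}$ for each fixed $\beta$ (absorbing $\norm{\beta}$ into the implied constant, using submultiplicativity $\norm{\g\beta}\le\norm{\g}\norm{\beta}$ and noting $2\zeta-k$ may be taken positive, or handling the sign by the reverse submultiplicativity $\norm{\g}\le\norm{\g\beta}\norm{\beta^{-1}}$ as needed). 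Because $\rho$ is irreducible, the orbit $\{\rho(\beta)\X(\tau_0):\beta\in\rG\}$ spans $\C^m$: the span is a nonzero $\rho$-invariant subspace, hence all of $\C^m$. Therefore I can select finitely many $\beta_1,\dots,\beta_m$ so that $\{v_{\beta_1},\dots,v_{\beta_m}\}$ is a basis, and express any unit vector as a bounded linear combination of these basis vectors; applying $\rho(\g)$ and using the per-vector bounds yields $\norm{\rho(\g)}\ll\norm{\g}^{2\zeta-k}$, as claimed.

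For part (b) the argument is identical in structure, except that the upper bound on $\norm{\X}$ now contains the factor $\max_{0\le j\le m-1}\{|x+iy|^j y^{-\zeta}\}$; tracking $|\g\tau_0|$ and $\Im(\g\tau_0)$ through the functional equation contributes the extra factor $\norm{\g}^{j}$, producing the stated bound $\norm{\rho(\g)}\ll\max\{\norm{\g}^{j+2\zeta-k}\}_{0\le j\le m-1}$. Part (c) follows formally: decompose the representation into irreducible pieces (or pass to a composition series) and observe that $\X$ projects nontrivially onto at least one irreducible constituent on which the hypotheses persist, so the bound from (a) or (b) applies to that constituent. The main obstacle throughout is the spanning/irreducibility step, both in verifying that the chosen $v_{\beta_i}$ can be taken to form a well-conditioned basis (so that the change-of-basis constants do not depend on $\g$) and in ensuring the exponent bookkeeping when absorbing $\norm{\beta}$ factors respects the sign of $2\zeta-k$; everything else reduces to the elementary $\Im$ and norm estimates for the $\mr{SL}_2$-action.
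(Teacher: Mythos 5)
Your proposal is correct and follows essentially the same route as the paper: the paper also uses irreducibility to show that the values of $\X$ span $\C^m$ (it takes $\mathcal{W}=\mathrm{Span}_{\C}\{\X(\tau)\}$, notes it is $\rho$-invariant, picks points $\tau_1,\dots,\tau_m$ with $\X(\tau_i)$ a basis, and bounds $\norm{\rho(\g)}$ by $\sup_i\norm{\rho(\g)\X(\tau_i)}/\norm{\X(\tau_i)}$ via the functional equation and the $\mr{Im}(\g\tau)=\mr{Im}(\tau)/|c\tau+d|^2$ identity). Your variant of spanning by the orbit $\{\rho(\beta)\X(\tau_0)\}$ of a single vector is an equivalent way of producing the same basis, and parts (b) and (c) are handled in the same manner.
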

The reader may consider this as a converse to Theorem~\ref{maintheorem}. In particular, this shows that the assumption on representation $\rho$ is necessary and sufficient in Theorem~\ref{maintheorem}.

\subsection{Overview of the article} We will now quickly summarize the content of each section. In Section~\ref{sec:fg}, the theory of Fuchsian groups is briefly outlined. There are many resources available on the subject and nothing original is guaranteed in this section. Rather, it serves to the reader a quick introduction of the subject and provides a notational setup for the article. Section~\ref{structure} will prove to be an important tool to establish our main result. In Section~\ref{sec:vvaf}, we discuss in detail the theory of vector-valued automorphic forms for any Fuchsian group of the first kind. Mostly, the Fourier expansions of the automorphic forms at finite and infinite cusps are explained with details to familiarize the reader with the general aspects of the theory of automorphic forms. Section~\ref{af} and Section~\ref{se:logvvaf}, consist the proof of our main Theorem~\ref{maintheorem}. The proof is divided into two parts: in the first part, in Section~\ref{af}, we study the growth of Fourier coefficients of admissible vvaf; whereas in the second part, that is in Section~\ref{se:logvvaf}, we do this for logarithmic vvaf. In Section~\ref{sec:rmks}, we prove our second main result Theorem~\ref{thm:converse}. More precisely, we establish a sufficient criteria for a representation to have polynomial growth. In Section~\ref{L-funct}, we made our attempt to study the $L$-functions attached to vvaf $\X$ and their analytic continuation. We end this article with Section~\ref{sec:app}, discussing some applications and consequences of Theorem~\ref{maintheorem}. The reader can view the content of last two sections as an immediate consequences of our main theorem combining with the classical analytic tools.

\subsection{Notations}\label{notations} 
Every constant appearing in the draft depends on the Fuchsian group, unless otherwise specified. We write $f \ll g$ for $|f|\leq c|g| $ where $c$ is a constant irrespective of the domains of $f$ and $g$, often $f=O(g)$ is written to denote the same. This constant depend on $\rG$ and the representation associated to it, and often only on the Fuchsian group $\rG$. We also write $f\sim g,$ if their domain is in $\mathbb{R},$ and they are asymptotically equivalent, that is, if $\lim_{x\to \infty}\frac{f(x)}{g(x)}=1.$ In the up-coming sections we shall write $z=u+iv$ as a point in any fundamental domain of $\rG$ and $\tau=x+iy$ a point in $\mathbb{H}.$


\section{Fuchsian groups}\label{sec:fg}
The group of all orientation-preserving isometries of the upper half plane $\h$ for the  Poincar\'e metric $ds^2=\frac{dx^2+dy^2}{y^2}$,  coincides with the group PSL$_2(\R)$. A Fuchsian group is a discrete subgroup $\rG$ of PSL$_2(\R)$ for which $\rG\backslash \h$ is topologically a Riemann surface with finitely many punctures. For a quick exposition on the theory of Fuchsian groups, we refer the reader to Chapter 1 of~\cite{Shimura1}. A group $\rG$ in PSL$_2(\R)$ is called discrete, if $\rG$ is a discrete subgroup of PSL$_2(\R)$ with respect to the induced topology of PSL$_2(\R)$.  More explicitly, to define the discreteness of a subgroup $\rG$ of PSL$_2(\R)$, we mean: \\ 
\emph{given any matrix $A \in \rG$, there is an $\epsilon_{A}>0$ such that all the matrices $B (\neq A)$ in $\rG$ have $\mr{dist}(A, B)>\epsilon_{A}$, where $$\mr{dist}(A, B) = \mr{min}\bigg\{\sum_{i,j}\mid A_{ij}-B_{ij}\mid, \sum_{i,j}\mid A_{ij}+B_{ij}\mid \bigg\}\, .$$} The action of any subgroup of SL$_2(\R)$  on $\h$ is the M\"obius action, defined by  
\begin{equation}\label{mobius}
\left({a \atop c}{b \atop d}\right) \cdot \tau = \frac{a\tau+b}{c\tau+d}.
\end{equation}  
Define $\h^{*}=\h \cup \R \cup \{\infty \}$ to be the extended upper half plane of $\mr{PSL}_2(\R)$ and this action can be extended to $\h^{*}$. For any $\g=\pm\tmt{a}{b}{c}{d} \in \mr{PSL}_2(\R)$, the action of $\g$ on $\infty$ is defined as follows: 
\begin{equation}\label{eq:ginfty}
\g \cdot \infty=\mr{lim}_{\tau \mapsto \infty} \frac{a\tau+b}{c\tau+d}=\frac{a}{c} \in \R \cup \{ \infty\},
\end{equation} 
 and for any $x\in \R$, the action is defined similarly by taking the limit $\tau \mapsto x$ in~\eqr{ginfty}.

The elements of $\mr{PSL}_2(\R)$ can be divided into three classes: elliptic, parabolic and hyperbolic elements. An element $\g \in \mr{PSL}_2(\R)$ is elliptic, parabolic or hyperbolic, if the absolute value of the trace of $\g$ is respectively less than, equal to or greater than 2. A point $\tau \in \h^{*}$ is said to be a fixed point of $\g \in \mr{PSL}_2(\R)$ if $\g \cdot \tau=\tau.$ If $\g=\pm\tmt{a}{b}{c}{d}$ is a parabolic element then its fixed point $\tau= \frac{a \mp 1}{c}$ when $a+d=\pm 2$ and $c\neq 0$, in addition $\tau=\infty$ when $c=0$. 
\begin{note}\label{Ac-matrix}\rm $\mr{PSL}_2(\R)$ acts on $\R \cup \{ \infty \}$. Note that in $\h^{*}$ there is only one notion of $\infty$ usually denoted by $i \infty$ but for notational convenience it will be written $\infty$. Following~\eqr{ginfty}, for any $x \in \R$ it is observed that there exists an element $\g=\pm\tmt{x}{-1}{1}{\ 0}$ such that $\g \cdot \infty=x$ which means PSL$_2(\R)$ acts transitively on $\R \cup \{ \infty \}$. For any $x\in \R$, such $\g$ is denoted by $A_{{x}}$.
\end{note}

\begin{defn}\rm
Let $\rG$ be a subgroup of $\mr{PSL}_2(\R)$. A point $\tau \in \h$ is called an elliptic fixed point of $\rG$ if it is fixed by some nontrivial elliptic element of $\rG$, and $\cu \in \R \cup \{\infty\}$ is called a cusp (respectively hyperbolic fixed point) of $\rG$ if it is fixed by some nontrivial parabolic (respectively hyperbolic) element  of $\rG$. Let $\Cu_{\rG}$ denote the set of all cusps of $\rG$ and we define $\h_{\rG}^{*}=\h \cup \Cu_{\rG}$ to be the extended upper half plane of $\rG$.
\end{defn}
For example: if $\rG=\mr{PSL}_2(\R)$ then $\Cu_{\rG}=\R \cup \{ \infty \}$ and
 if $\rG=\mr{PSL}_2(\Z)$ then $\Cu_{\rG}=\Q \cup \{ \infty \}$ consists of the $\rG$-orbit of cusp $\infty$. For any $\tau \in \h^{*}_{\rG}$, let $\rG_{\tau}=\{ \g \in \rG | \g \cdot \tau =\tau \}$ be the stabilizer subgroup of $\tau$ in G.  For any $\cu \in \Cu_{\rG}$, $\rG_{\cu}$ is an infinite order cyclic subgroup of G.  If $\cu =\infty$ then $\rG_{\infty}$ is generated by $t_{\infty} = \pm\left({1 \atop 0}{h_{{\infty}} \atop 1}\right)=t^{h_{{\infty}}}$ for a unique real number $h_{{\infty}} > 0$ called the cusp width of the cusp $\infty$.  In case of $\cu \neq \infty$, $\rG_{\cu}$ is generated by $t_{\cu}= A_{\cu} t^{h_\cu} A_{\cu}^{-1}$ for some smallest real number $h_{\cu} >0$, called the cusp width of the cusp $\cu$ such that $t_{\cu} \in \rG$ where $A_{\cu}= \pm\left({\cu \atop 1}{-1 \atop \ 0}\right) \in \mr{PSL}_2(\R)$ so that $A_{\cu}(\infty)=\cu$, as defined in the Note~\ref{Ac-matrix}. From now on for convenience $h_{{\infty}}$ will be denoted by $h$. For every $\cu \in \Cu_\rG \backslash \{\infty \}$, the elements of $\rG_\cu$ depend on $\cu$. Since $\cu \in \R \cup \{\infty\}$,  there are two possibilities: $\cu \in \R$ or $\cu=\infty$. Consider $\cu \in \R$ and let $\g$ be any element in $\rG_{\cu}$ then $\g= (t_\cu)^{r}$ for some integer $r$, that is, $\g=  A_{\cu} (t^{h_\cu})^{r} A_{\cu}^{-1}$.

\subsection{Fuchsian groups of the first kind}  
The class of all Fuchsian groups is divided into two categories, namely Fuchsian groups of the first and of the second kind. To distinguish between them, a fundamental domain of Fuchsian groups is defined. The fundamental domain, denoted by $\mr{F}_{\rG}$, exists for any discrete group $\rG$ acting on $\h$ and is defined as follows:

\begin{defn}\label{domain}\rm
Let $\rG$ be any discrete subgroup of $\mr{PSL}_2(\R)$. Then a domain (connected open set) $\mr{F}_{\rG}$ in $\h$ is called the fundamental domain of $\rG$, if \begin{enumerate}
\item no two elements of $\mr{F}_{\rG}$ are equivalent with respect to $\rG$,
\item any point in $\h$ is equivalent to a point in the closure of $\mr{F}_{\rG}$ with respect to $\rG$, that is, any $\rG$-orbit in $\h$ intersects with the closure of $\mr{F}_{\rG}$.
\end{enumerate}
\end{defn}

The hyperbolic area of $\mr{F}_{\rG}$ may be finite or infinite. When $\mr{F}_{\rG}$ has finite area then such $\rG$ is a Fuchsian group of the first kind otherwise of the second kind. For example: $\rG=\langle t=\tmt{1}{1}{0}{1} \rangle$ is the simplest example of a Fuchsian group of the second kind. In this article, we are mainly concerned with Fuchsian groups of the first kind. A Fuchsian group $\rG$ will have several different fundamental domains but it can be observed that their area will always be the same. From $\mr{F}_{\rG}$ a (topological) surface $\Sigma_{\rG}$ is obtained by identifying the closure $\widehat{\mr{F}}_{\rG}$ of $\mr{F}_{\rG}$ using the action of $\rG$ on $\widehat{\mr{F}}_{\rG}$, \ie $\Sigma_{\rG}= \widehat{\mr{F}}_{\rG} / {\sim}$ (equivalently $\Sigma_{\rG}= \rG \backslash \h^{*}_{\rG}$).

 \subsection{Structure of words in Fuchsian groups}\label{structure} 
We say a word in a Fuchsian group is an element of the form $C_1C_2\cdots C_s,$ where each $C_i\in \rG.$ A theorem of Eichler~\cite[Satz 1]{E} asserts that there exists a finite set $\rG_{\text{Eichler}}\subset \rG$ such that any $\gamma$ in $\rG$ equals to a product $C_1C_2\ldots C_L$ for some $C_1,C_2
\ldots,C_L$ so that $L$ is bounded by a linear function of order $\log\|\gamma\|$ and each $C_i$ either belongs to $\rG_{\text{Eichler}}$ or is a power of a parabolic element of $\rG_{\text{Eichler}}$. However, this result will not be sufficient for our purpose because we will need to control the powers of parabolic elements appearing in the Eichler's decomposition. 
\par Along the same lines, Beardon~\cite{Beardon1975} gave a decomposition where each $C_i$ is written as a product of elements from a geometrically chosen set of generators. Following Beardon's notations, the number of such elements coming in the product is denoted by $|C_i|.$ These generators, say $\rG^*,$ are precisely the side pairings of a convex fundamental domain. Let $\mr{D}_{\rG}$ be such a convex fundamental domain of $\rG$. We need to understand these $C_i$’s in more details for the work in Section~\ref{se:logvvaf}. It is known that $\widehat{\mr{D}}_\rG$ has finitely many vertices. We say that two vertex of $\widehat{\mr{D}}_\rG$ are equivalent, if and only if they differ by an element of $\rG,$ and denote $v_1\sim v_2.$ We call a vertex as parabolic vertex, if it is a fixed point of a parabolic element of $\rG.$ It is known that the stabilizer of any parabolic vertex is an infinite cyclic group. 
\begin{lem}\label{lem:beardon} There exists a constant $c$ (possibly depending on $\rG$) and a finite subset $\rG_0$ of $\rG$ such that any $C_i$ with $|C_i|>c$, can be written as a product of a parabolic element with an element of $\rG_0.$ Here the parabolic element is of the form $t_{\cu}^{n},$ for some cusp $\cu\in \rG,$ and integer $n.$
\end{lem}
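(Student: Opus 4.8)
The plan is to combine Beardon's geometric decomposition with a careful analysis of the side-pairing transformations near a parabolic vertex. Beardon's theorem writes each element $C_i$ as a product of $|C_i|$ side-pairings of the convex fundamental domain $\mr{D}_\rG$, and geometrically $|C_i|$ measures how many times the geodesic path associated to $C_i$ crosses the tessellation. The key observation is that a long string of side-pairings forcing $|C_i|$ to be large can only arise by repeatedly ``winding around'' a cusp, since away from the parabolic vertices the fundamental domain has only finitely many vertices and the incidence geometry of the tessellation is locally finite and uniformly controlled.

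First I would fix the convex fundamental domain $\mr{D}_\rG$ and list its finitely many vertices, separating the parabolic vertices from the non-parabolic ones. Around each non-parabolic vertex the tessellation is finite (the stabilizer is finite, being either trivial or an elliptic cyclic group), so there is a uniform bound $N$ on the number of copies of $\mr{D}_\rG$ meeting any such vertex. Next I would examine what it means for $|C_i|$ to exceed a threshold: the word $C_i$ traces a chain of adjacent tiles, and if this chain does not accumulate at a parabolic vertex, then by local finiteness its length is bounded by a constant depending only on $N$ and the number of vertices. Therefore the only way to have $|C_i|$ arbitrarily large is for the side-pairing sequence to pass repeatedly through the cuspidal region attached to some parabolic vertex $\cu$, where the stabilizer $\rG_\cu = \langle t_\cu\rangle$ is infinite cyclic.

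The heart of the argument is then to show that, for $|C_i|>c$ with $c$ chosen larger than the non-parabolic bound above, the excess side-pairings collapse into a single parabolic power. Concretely, inside a horocyclic neighbourhood of a cusp $\cu$ the side-pairings that identify the two cusp-sides are exactly the generator $t_\cu$ of the stabilizer and its inverse; any long excursion through this neighbourhood is the transformation $t_\cu^{n}$ for some integer $n$ (whose magnitude grows with $|C_i|$), possibly pre- and post-composed with a bounded number of side-pairings needed to enter and exit the horoball. I would collect all these finitely many ``entry/exit'' side-pairing products, together with all the $C_i$ of bounded length, into the finite set $\rG_0$. Then $C_i = t_\cu^{n}\cdot g$ with $g\in\rG_0$, which is exactly the claimed decomposition.

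The main obstacle I expect is the bookkeeping that guarantees the ``remainder'' after factoring out the parabolic power genuinely lies in a \emph{finite} set independent of $|C_i|$, uniformly over all cusps and all words. This requires showing that the geometry near each cusp stabilizes: once the side-pairing chain enters the horoball deeply enough, every subsequent step is forced to be the parabolic generator, so that all the non-parabolic ``overhead'' occurs within a compact collar of bounded combinatorial size. Making this precise means invoking the structure of $\rG_\cu$ as an infinite cyclic group acting by translation on the horocycle (via conjugation by $A_\cu$ to the standard cusp at $\infty$, where $t_\cu = A_\cu t^{h_\cu} A_\cu^{-1}$), and checking that the finitely many tiles in a fundamental collar for this cyclic action exhaust all possible entry/exit configurations. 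Once this uniform finiteness is established, taking $\rG_0$ to be the union over the finitely many cusp-orbits of these configurations, and $c$ the maximum of the associated combinatorial lengths, completes the proof.
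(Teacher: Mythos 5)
Your proposal is correct in outline and lands on the same decomposition, but it takes a more self-contained geometric route than the paper, and at one point it overclaims. The paper does not re-derive the ``winding around a cusp'' dichotomy: it cites Beardon's Theorem 3 as a black box for the statement that when $|C_i|>c$ the tiles $\widehat{\mr{D}}_\rG, A_{n_i+1}\widehat{\mr{D}}_\rG,\dots,A_{n_i+1}\cdots A_{n_{i+1}}\widehat{\mr{D}}_\rG$ all share a common parabolic vertex $v$, and then finishes in two lines of algebra: the full block $C_i$ sends some vertex $w$ of $\widehat{\mr{D}}_\rG$ to $v$, so after composing with one element $C_{v,w}$ chosen once and for all for each of the finitely many pairs of equivalent vertices, the result fixes $v$ and hence lies in the infinite cyclic stabilizer $\langle P_v\rangle$; thus $C_i=P_v^{k}C_{v,w}^{-1}$ with $C_{v,w}^{-1}$ ranging over a finite set. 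Your horoball analysis is aiming at the same finiteness (the tiles containing $v$ form finitely many orbits under $\langle t_\cu\rangle$), but your intermediate assertion that ``once the chain enters the horoball deeply enough, every subsequent step is forced to be the parabolic generator'' is false in general: when $\widehat{\mr{D}}_\rG$ has several $\rG$-equivalent ideal vertices, the side-pairings in the chain around $v$ cycle through conjugating elements and only the product over a full period equals the parabolic generator. Your closing remark about ``finitely many entry/exit configurations in a fundamental collar for the cyclic action'' is the correct repair, and once made precise it is equivalent to the paper's stabilizer argument --- but the paper's version avoids all of that bookkeeping, at the cost of leaning on Beardon's Theorem 3 rather than re-proving it from local finiteness. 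Net: your approach buys self-containedness and geometric intuition; the paper's buys brevity and avoids the trap you partially stepped into.
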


\begin{proof}  
It follows from Theorem 3 of~\cite{Beardon1975} that there exist a constant $c$ (possibly depending on $\rG$) such that any $C_i$ with $|C_i|>c$ can be written as $A_{n_i+1}\cdots A_{n_{i+1}}$ such that $$\widehat{\mr{D}}_\rG,A_{n_i+1}\widehat{\mr{D}}_\rG,\dots, A_{n_i+1}\cdots A_{n_{i+1}}\widehat{\mr{D}}_\rG$$ share a common parabolic vertex, say $v.$ Therefore, we get a sequences to vertices $\{v_j\}_{1\leq j \leq n_{i+1}-n_i}$ in  $\widehat{\mr{D}}_\rG$ such that \[A_{n_i+1}A_{n_i+2}\cdots A_{n_{i}+j}(v_j)=v,~\forall~1\leq j \leq n_{i+1}-n_i.\] For each pair $(v_1,v_2)$ of equivalent vertices, we fix an element $C_{v_1,v_2}\in \rG$ which takes $v_1$ to $v_2.$ We then have
\[A_{n_i+1}A_{n_i+2}\cdots A_{n_{i+1}}C_{v,v_{n_{i+1}-n_i}}(v)=v.\]
In particular, we can write $A_{n_i+1}A_{n_i+2}\cdots A_{n_{i+1}}C_{v,v_{n_{i+1}-n_i}}=P_v^{k},$ where $P_v$ is the parabolic element in $\rG^{*}$ fixing $v.$ This is because, the stabilizer subgroup (in $\rG$) of any parabolic vertex is a cyclic group. The proof is now complete because $\{C_{(v_1,v_2)\mid v_1\sim v_2}\}$ is a finite set. Moreover, this parabolic element $P_v$ is a power of $t_{\cu}$ for some cusp $\cu,$ because the parabolic vertex $v$ is also a cusp by definition.
\end{proof}

\section{Vector-valued automorphic forms}\label{sec:vvaf}

This section reviews the basics of vector-valued automorphic forms that we need to understand and prove Theorem~\ref{maintheorem} and Theorem~\ref{thm:converse}. Rather recently, theory of vector-valued modular forms for modular group has witnessed a fair amount of their development and interest, see the references mentioned in~\cite{Bajpai2019}. Hence, there are a few resources available which could be used to review the fundamental concepts of vector-valued automorphic forms. However, our treatment to vector-valued automorphic forms in this section closely follow~\cite{BajpaiThesis, Bajpai2019, Gannon1, KM, KM2012}.

Let $j: \mr{PSL}_2(\R) \times \C \to \C$ be the function such that for every $\g=\pm\tmt{a}{b}{c}{d}$ in  $\mr{PSL}_2(\R)$ and $\tau \in \C$,  $j(\g, \tau)=c\tau+d.$ One of the important properties of the  $j$-function, which we will make use of, is \begin{equation}\label{eq:property-j} j(\g_1 \g_2, \tau)=j(\g_1, \g_2 \tau) \cdot j(\g_2, \tau)\end{equation} where $\g_1, \g_2 \in \rG$ and $\tau \in \C$\, are such that $\g_2\tau\neq\infty$.
\begin{defn} \label{stroke_operator} \rm If $\X:\h\to\C^m$ is a vector-valued meromorphic function, $\gamma\in \mr{PSL}_2(\R)$ and $k$ is an even integer, we define a vector-valued meromorphic function $\X|_k\gamma$ on $\h$ by setting $\X|_k\gamma(\tau)=j(\gamma,\tau)^{-k}\X(\gamma\tau).$
\end{defn}

It is easy to check that $\X|_k\gamma_1|_k\gamma_2=\X|_k(\gamma_1\gamma_2)$, so the stroke operator induces a right group action on the space of vector-valued meromorphic functions on $\h$. Moreover, if $T\in \mr{GL}_m(\C)$, then $T(\X|_k\gamma)=(T\X)|_k\gamma$. This plays an important role in our article, as it allows us to relate the behaviors of the automorphic forms when we move from one cusp to another. 

\begin{defn} \rm Let $\X:\h\to\C^m$ be a vector-valued meromorphic function. Then
\begin{itemize} 
\item  We say $\X$ has \emph{moderate growth at $\infty$} when there exist $\nu \in \R$ and $Y>0$ such that $\|\X(\tau)\|< \exp(\nu y)$ when $y>Y$. Recall that we are denoting $y=\mr{Im}\tau.$
\item We say $\X$ has \emph{moderate growth at $\cu \in \R$} with respect to $k\in 2\mathbb Z$ when $\X|_kA_\cu$ has moderate growth at $\infty$.
\end{itemize}
\end{defn}

\begin{remark}\label{growth_f_stroke}\rm If $\X$ has moderate growth at $\cu$ with respect to $k$ and $\gamma\in \mr{PSL}_2(\R)$ sends $\infty$ to $\cu$, then $\X|_k\gamma$ also has moderate growth at $\infty$. This can be shown using the equality $\X|_k\g=\X|_kA_\cu|_kA_\cu^{-1}\g$ and the fact that $A_\cu^{-1}\g$ fixes $\infty$, so it is of the form $\tmt{*}{*}{0}{*}$.
\end{remark}
We are going to define what is a vector-valued automorphic form (vvaf) with respect to a representation $\rho:\rG\to \mr{GL}_m(\C)$. To motivate the general definition, we consider initially the case of admissible $\rho$. 
\begin{defn} \rm
Let $\rho:\rG\to \mr{GL}_m(\C)$ be a representation. We say $\rho$ is an {\it admissible representation } of $\rG$ if  $\rho(\gamma)$ is diagonalizable  for every parabolic element $\gamma\in \rG$. Otherwise, we say $\rho$ is a \emph{logarithmic representation }. 
\end{defn}
\begin{remark}\rm\label{rem:unitary} For the admissible case, moderate growth is actually same as saying that $\X$ is meromorphic at $\infty.$ This condition forces all eigenvalues of $\rho(t_{\infty})$ to be unitary. This is because, suppose $\rho(t_{\infty})$ has one eigenvalue with modulus greater than $1$, say the modulus is $r$. That means $|\X_i(\tau+nh)|=r^n|\X_i(\tau)|,~\forall n\in \mathbb{Z}, \tau\in \h$ and some component $\X_i $ of $\X.$ However for any $n, \tau+n h$ and $\tau$ have the same imaginary parts and this contradicts the moderate growth condition.
\end{remark}

\subsection{Admissible vvaf} \label{admissible_vvaf}
\begin{defn} \rm Let $\rG$ be a Fuchsian group of the first kind, $k$ be an even integer, $\rho:\rG\to \mr{GL}_m(\C)$ be an admissible representation and $\X:\h\to\C^m$ be a vector-valued meromorphic function. Then we say that $\X$ is an \emph{admissible vvaf} of weight $k$ with respect to $\rho$ provided that there are finitely many poles of $\X$ in the closure of a fundamental domain and $\X(\tau)$ satisfies following functional and growth conditions.

\begin{enumerate}
\item $ \X|_k\g = \rho(\g) \X, \quad \forall \g \in \rG,$\label{functional}
\item For any cusp $\cu$ of $\rG$, the function $\X$ has moderate growth at $\cu$ with respect to $k$.
\end{enumerate}
A vvaf is called \emph{holomorphic} if it has no poles in $\h$ and, for any cusp $\cu$ of $\rG$, the function $\X|_kA_\cu$ is bounded in some half-plane (contained in $\h$). It is called a \emph{cusp form} if, for any cusp $\cu$, the function $\X|_kA_\cu(\tau)$ approaches to $0$ as $y\to\infty$.
\end{defn}

\begin{remark}\rm
Usually the representation $\rho$ is referred to as the {\emph{representation }} of the automorphic forms of $\rG$. This article mainly explores the theory of automorphic forms with respect to the higher rank representation s $\rho$, that is for $m > 1$, and calls them vvaf.
\end{remark}

\begin{lem}\label{mero-at-c} Let $\X$ be a meromorphic vvaf of weight $k$ for the Fuchsian group $\rG$ with respect to $\rho$ and $\g\in \mr{PSL}_2(\R)$. Then $\X|_k\g$ is a meromorphic vvaf of weight $k$ for $\g^{-1}\rG\g$ with respect to the representation $\g^{-1}\delta\g\mapsto\rho(\delta)$.
\end{lem}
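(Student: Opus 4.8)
The plan is to verify the three conditions defining a meromorphic vvaf directly, transporting each structural property along the conjugation $\g^{-1}\rG\g$. Writing $\Y:=\X|_k\g$ and $\rho'(\delta'):=\rho(\g\delta'\g^{-1})$ for $\delta'=\g^{-1}\delta\g\in\g^{-1}\rG\g$, I would first observe that $\rho'$ is a genuine representation: since $\delta\mapsto\g^{-1}\delta\g$ is a group isomorphism $\rG\to\g^{-1}\rG\g$ and $\rho$ is a homomorphism, so is $\rho'$. The conjugate group $\g^{-1}\rG\g$ is again a Fuchsian group of the first kind, as conjugation by an element of $\mr{PSL}_2(\R)$ is a homeomorphism of $\h$ preserving discreteness and hyperbolic area.

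Next I would check the functional equation \eqref{functional} for $\Y$. For any $\delta'\in\g^{-1}\rG\g$, write $\delta'=\g^{-1}\delta\g$ with $\delta\in\rG$. Using that the stroke operator is a right action (stated just after Definition~\ref{stroke_operator}, $\X|_k\g_1|_k\g_2=\X|_k(\g_1\g_2)$), I compute
\[
\Y|_k\delta' = \X|_k\g|_k(\g^{-1}\delta\g) = \X|_k(\delta\g) = \X|_k\delta|_k\g = (\rho(\delta)\X)|_k\g.
\]
Invoking the other property noted there, that scalar-matrix multiplication commutes with the stroke operator ($T(\X|_k\g)=(T\X)|_k\g$ for $T\in\mr{GL}_m(\C)$), the right-hand side equals $\rho(\delta)(\X|_k\g)=\rho(\delta)\Y=\rho'(\delta')\Y$, which is exactly the required functional equation. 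This is the conceptual heart of the lemma, and it falls out cleanly from the two formal properties of the stroke operator.

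It then remains to transfer the analytic conditions, and this is the step I expect to require the most care. For the finiteness of poles: $\Y(\tau)=j(\g,\tau)^{-k}\X(\g\tau)$, so the poles of $\Y$ in a fundamental domain of $\g^{-1}\rG\g$ correspond, via the biholomorphism $\tau\mapsto\g\tau$, to poles of $\X$ in a fundamental domain of $\rG$ (the factor $j(\g,\tau)^{-k}$ contributes no poles in $\h$ since $j(\g,\tau)\neq0$ there), and the latter are finite by hypothesis. For moderate growth: I must show $\Y$ has moderate growth at every cusp $\cu'$ of $\g^{-1}\rG\g$. The cusps of $\g^{-1}\rG\g$ are exactly $\g^{-1}\cu$ for cusps $\cu$ of $\rG$. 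Using Remark~\ref{growth_f_stroke} together with the identity $\Y|_kA_{\cu'}=\X|_k(\g A_{\cu'})$ and the fact that $\g A_{\cu'}$ sends $\infty$ to the cusp $\cu=\g\cu'$ of $\rG$ at which $\X$ already has moderate growth, the moderate-growth bound for $\X$ transports to one for $\Y$. The main obstacle is bookkeeping the normalizing matrices $A_{\cu}$ versus $A_{\cu'}$ correctly: $A_{\cu'}$ need not equal $\g^{-1}A_{\cu}$, but the two differ by an element fixing $\infty$ (hence of the form $\tmt{*}{*}{0}{*}$), which is precisely the situation handled by the argument in Remark~\ref{growth_f_stroke}, so the exponential growth estimate is preserved under this correction.
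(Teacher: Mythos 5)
Your proposal is correct and follows essentially the same route as the paper's own proof: the functional equation via the right-action property of the stroke operator, the pole count via the biholomorphism onto the conjugated fundamental domain, and moderate growth via $\X|_k\g|_kA_{\cu'}=\X|_k(\g A_{\cu'})$ combined with Remark~\ref{growth_f_stroke}. The only additions are your explicit checks that $\rho'$ is a homomorphism and that $\g^{-1}\rG\g$ is again Fuchsian of the first kind, which the paper leaves implicit.
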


\begin{proof}
From Definition~\ref{stroke_operator}, the function $\X|_k\gamma$ is meromorphic on $\h$, with poles only at the points $\g$ sends to the poles of $\X$. Hence, if $\mr{F}_\rG$ is a fundamental domain for $\rG$, then $\X|_k\g$ has finitely many poles in $\g^{-1}\widehat{\mr{F}_\rG}=\widehat{\g^{-1}\mr{F}}_{\rG }$, which is a fundamental domain for $\g^{-1}\rG\g$. This is the pole behaviour we should have. We now consider the functional equation. Let $\g^{-1}\delta\g\in \g^{-1} \rG\g$. Then $\X|_k\gamma|_k\g^{-1}\delta\g=\X|_k\delta|_k\gamma=(\rho(\delta)\X)|_k\gamma=\rho(\delta)(\X|_k\g)$, as we desired. Finally, let $\cu$ be a cusp of $\g^{-1}\rG\g$. We show that $\X|_k\g$ has moderate growth at $\cu$ with respect to $k$. By definition, this means $\X|_k\g|_kA_\cu=\X|_k\g A_\cu$ has moderate growth at $\infty$. Since $\g A_\cu$ sends $\infty$ to a cusp of $\rG$, this follows from Remark~\ref{growth_f_stroke}.
\end{proof}
\begin{note}\rm$\;$
If $\X$ is a holomorphic vvaf, so is $\X|_k\g$. As a consequence of growth condition and functional behaviour,  $\X(\tau)$ has an infinite series expansion at any cusp $\cu \in \widehat{\Cu}_{\rG}$. These expansions, which are essentially Laurent series expansions, will be referred to as ``Fourier series expansions''. Often these expansions are referred to as $\tq_{\cu}$-expansions with respect to $\cu \in \Cu_{{\rG}}$, where $\tq_{\cu}=\exp\left(\frac{2\pi i A_{{\cu}}^{-1}\tau}{h_{\cu}}\right)$. In addition, for notational convenience we will always use $\tq$ to denote $\tq_{\infty}$.
\end{note}
\begin{lem}\label{fouriersca} Let $f(\tau)$ be a scalar-valued meromorphic function on $\h$ which has no poles when $y \geq Y$ for some $Y>0$ and obeys $f(\tau+h)=\exp(2\pi i \Lambda) f(\tau)$ for every $\tau \in \h$ for some $\Lambda \in \R.$ Suppose that $f(\tau)$ has moderate growth at $\infty$. Then  \begin{equation}\label{eq:fourier-at-infty} \tq^{^{\ -\Lambda}} f(\tau)=\sum_{n=-M}^{\infty} f_{[n]}\ \tq^{^{\ n}},\end{equation} for some $f_{{[n]}}\in \mathbb{C},M\in \mathbb{Z}$, and this sum converges absolutely in $y > Y$.
\end{lem}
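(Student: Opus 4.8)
The plan is to multiply $f$ by a suitable exponential factor so as to convert the quasi-periodicity into genuine $h$-periodicity, and then read off a Fourier expansion by passing to the punctured disc via $\tq=\exp(2\pi i \tau/h)$. Concretely, set $g(\tau)=\tq^{-\Lambda}f(\tau)=\exp(-2\pi i \Lambda \tau/h)\,f(\tau)$, which is well defined and single valued on $\h$ because $\tau\mapsto 2\pi i\tau/h$ is holomorphic on the simply connected domain $\h$. A direct computation using $f(\tau+h)=\exp(2\pi i\Lambda)f(\tau)$ shows $g(\tau+h)=g(\tau)$, so $g$ is genuinely $h$-periodic; moreover $g$ is meromorphic on $\h$ and, by the hypothesis on poles, holomorphic on the half-plane $y>Y$.

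First I would descend $g$ to the punctured disc. The map $\tau\mapsto \tq=\exp(2\pi i\tau/h)$ identifies the half-plane $y>Y$ with the punctured disc $0<|\tq|<R$, where $R=\exp(-2\pi Y/h)$, and realizes $\tau\mapsto\tau+h$ as the deck transformation. Since $g$ is $h$-periodic and holomorphic on $y>Y$, it factors through this quotient as a single-valued holomorphic function $\widetilde g$ on $0<|\tq|<R$. Standard Laurent theory then gives an expansion $\widetilde g(\tq)=\sum_{n\in\Z} c_n\tq^n$ converging absolutely and locally uniformly on the punctured disc, with $c_n=\frac{1}{2\pi i}\oint_{|\tq|=r}\widetilde g(\tq)\,\tq^{-n-1}\,\rd\tq$ for any $0<r<R$.

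The remaining point, and the only place where moderate growth enters, is to show $c_n=0$ for all sufficiently negative $n$. From moderate growth there are $\nu\in\R$ and $Y_0>0$ with $|f(\tau)|<\exp(\nu y)$ for $y>Y_0$. Since $|\tq^{-\Lambda}|=\exp(2\pi\Lambda y/h)$, we get $|g(\tau)|\le \exp(Cy)$ with $C=2\pi\Lambda/h+\nu$ for $y>Y_0$, and rewriting $y=-\tfrac{h}{2\pi}\log|\tq|$ this reads $|\widetilde g(\tq)|\le |\tq|^{-D}$ with $D=Ch/(2\pi)$ for all sufficiently small $|\tq|$. Feeding this into the Cauchy estimate yields $|c_n|\le r^{-D-n}$ for small $r$, and letting $r\to0$ forces $c_n=0$ whenever $n<-D$. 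Hence with $M=\lceil D\rceil$ we obtain $\widetilde g(\tq)=\sum_{n=-M}^\infty c_n\tq^n$, i.e. $\tq^{-\Lambda}f(\tau)=\sum_{n=-M}^\infty f_{[n]}\,\tq^{\,n}$ with $f_{[n]}=c_n$, the asserted absolute convergence in $y>Y$ being inherited from that of the Laurent series on $0<|\tq|<R$.

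I expect the only genuinely delicate step to be verifying that the descent to $\widetilde g$ is legitimate — that $g$ really is single valued and holomorphic after the change of variables — together with the careful translation of the growth bound into the vanishing of the negative Laurent coefficients; the periodicity check and the Cauchy estimate itself are routine.
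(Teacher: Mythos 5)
Your proof is correct and follows essentially the same route as the paper: both convert the quasi-periodicity into genuine $h$-periodicity via the factor $\tq^{-\Lambda}$, descend to the punctured disc $0<|\tq|<\exp(-2\pi Y/h)$, and use moderate growth to truncate the principal part. The only cosmetic difference is that the paper first multiplies by $\tq^{M}$ and invokes the removable singularity theorem to get a Taylor series, whereas you take the full Laurent expansion and kill the coefficients with $n<-D$ by Cauchy estimates; these are interchangeable versions of the same argument.
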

\begin{proof} Since $f(\tau)$ has moderate growth at $\infty$, there is an integer $M$ such that $F(\tau)=\tq^{^{\ M-\Lambda}} f(\tau)$ approaches to $0$ as $y \to \infty$ for $0\leq x \leq h$. Note that $F(\tau+h)=F(\tau)$ therefore $g(\tq)=F(\tau)$ is a well defined and holomorphic function in the punctured disc $0 < |\tq|< \exp(-\frac{2\pi Y}{h})$, about $\tq =0$  and is bounded there (because it approaches to $0$ as $\tq$ goes to $0$). This means that $\tq=0$ is a removable singularity thus defining $g(0)=0$ gives $g(\tq)$ is holomorphic in the disc $|\tq|< \exp(-\frac{2\pi Y}{h})$. This means that $g(\tq)$ has a Taylor expansion in $\tq$ which converges absolutely in that disc.
\end{proof}
For each eigenvalue $\lambda$ of $\rho(t_{\infty}),$ we denote $\mu(\lambda)$ to be the unique real number such that $\lambda=\exp(2\pi i\mu(\lambda))$ and $0\le\mu(\lambda)<1$. 

\begin{prop}\label{fourier}
Let $\rG$ be a Fuchsian group of the first kind with a cusp at $\infty$ and $k$ be an even integer. Let $\X$ be a meromorphic vvaf of weight $k$ with respect to the admissible representation $\rho:\rG\to \mr{GL}_m(\C)$. Let $\rho(t_\infty)=P\mathrm{diag}\left(\lambda_1,\lambda_2,\cdots,\lambda_m\right)P^{-1}$. Then, at the cusp $\infty$,
\begin{equation}\label{fourier_series_vvaf}
    \X(\tau) = P \tq^{\Lambda} P^{-1}\sum_{n=-M}^\infty \X_{[n]} \tq^{n} 
\end{equation}
where $\X_{[n]}\in\C^m$ and $M\in\mathbb Z$. Here $\tq^{\Lambda}$ is denoted to be the diagonal matrix $\mathrm{diag}\left(\tq^{\mu(\lambda_1)}, \tq^{\mu(\lambda_2)},\cdots, \tq^{\mu(\lambda_m)}\right)$ 
\end{prop}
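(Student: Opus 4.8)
The plan is to reduce the vector-valued statement to $m$ scalar applications of Lemma~\ref{fouriersca}, after diagonalizing the action of the parabolic generator $t_\infty$. First I would feed the generator $t_\infty=\pm\tmt{1}{h}{0}{1}$ of $\rG_\infty$ into the functional equation. Since $j(t_\infty,\tau)=1$, the stroke operator degenerates to a translation, $\X|_kt_\infty(\tau)=\X(\tau+h)$, so condition $(1)$ of the admissible vvaf definition gives the clean relation $\X(\tau+h)=\rho(t_\infty)\X(\tau)$ for all $\tau\in\h$. Writing $\rho(t_\infty)=P\,\mathrm{diag}(\lambda_1,\dots,\lambda_m)P^{-1}$ and setting $\Y:=P^{-1}\X$, this becomes $\Y(\tau+h)=\mathrm{diag}(\lambda_1,\dots,\lambda_m)\Y(\tau)$, that is, componentwise $\Y_i(\tau+h)=\lambda_i\Y_i(\tau)$. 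By Remark~\ref{rem:unitary} each $\lambda_i$ is unitary, so $\mu(\lambda_i)$ is a well-defined real number with $\lambda_i=\exp(2\pi i\,\mu(\lambda_i))$, and the scalar relation reads $\Y_i(\tau+h)=\exp(2\pi i\,\mu(\lambda_i))\Y_i(\tau)$.

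Next I would apply Lemma~\ref{fouriersca} to each $\Y_i$ with $\Lambda=\mu(\lambda_i)$, checking its three hypotheses. Each $\Y_i$ is a fixed linear combination of the components of $\X$, hence meromorphic on $\h$; its moderate growth at $\infty$ is inherited from that of $\X$ via $\norm{\Y(\tau)}\le\norm{P^{-1}}\,\norm{\X(\tau)}$; and the very same moderate-growth bound $\norm{\X(\tau)}<\exp(\nu y)$ valid for $y>Y$ forbids any pole in that half-plane, since a pole there would make $\norm{\Y_i}$ unbounded on a region where it is bounded. Lemma~\ref{fouriersca} then yields $\tq^{-\mu(\lambda_i)}\Y_i(\tau)=\sum_{n=-M_i}^\infty c_{i,n}\tq^{n}$, converging absolutely for $y>Y$.

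To assemble the vector form I would set $M=\max_i M_i$ (padding the shorter expansions with zero coefficients) and collect the scalar expansions into $\Y(\tau)=\tq^{\Lambda}\sum_{n=-M}^\infty \Y_{[n]}\tq^{n}$, where $\Y_{[n]}\in\C^m$ has $i$-th entry $c_{i,n}$ and $\tq^{\Lambda}=\mathrm{diag}(\tq^{\mu(\lambda_1)},\dots,\tq^{\mu(\lambda_m)})$; the diagonal placement of $\tq^{\mu(\lambda_i)}$ is precisely what makes the $i$-th entry of the right-hand side reproduce $\Y_i$. Multiplying on the left by $P$ and inserting $P^{-1}P=I$ gives $\X(\tau)=P\tq^{\Lambda}P^{-1}\sum_{n=-M}^\infty (P\Y_{[n]})\tq^{n}$, and defining $\X_{[n]}:=P\Y_{[n]}$ produces the asserted expansion~\eqref{fourier_series_vvaf}.

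The work here is essentially bookkeeping rather than a deep obstacle; the two points deserving care are, first, that a single moderate-growth hypothesis must be made to supply simultaneously both the growth bound and the pole-freeness demanded by Lemma~\ref{fouriersca}, and second, that the $P^{-1}$ in the stated formula is merely the relabeling recording the Fourier coefficients of $\X$ itself rather than of its diagonalized transform $\Y$. I would also note in passing that the unitarity of the $\lambda_i$, invoked from Remark~\ref{rem:unitary}, is exactly what guarantees that each $\mu(\lambda_i)$ is real, so that $\tq^{\mu(\lambda_i)}$ makes sense as written.
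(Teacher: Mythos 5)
Your proof is correct and follows essentially the same route as the paper's: conjugate by $P$ to reduce to the scalar relation $\Y_i(\tau+h)=\lambda_i\Y_i(\tau)$, apply Lemma~\ref{fouriersca} componentwise, take a common $M$, and multiply back by $P$ to define $\X_{[n]}=P\Y_{[n]}$. Your additional care in verifying the hypotheses of Lemma~\ref{fouriersca} (pole-freeness in a half-plane and moderate growth inherited through $P^{-1}$) is a welcome elaboration of what the paper leaves implicit.
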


\begin{proof}
We have $P^{-1}\X(\tau+h)=\mathrm{diag}\left(\lambda_1,\lambda_2,\cdots,\lambda_m\right)P^{-1}\X(\tau)$. Hence, each component of the function $\tau\mapsto P^{-1}\X(\tau)$ satisfies the hypotheses of Lemma~\ref{fouriersca}. Applying this, we get 
\begin{equation}\label{eqn:expansion adm}
P^{-1}\X(\tau)=\tq^{\Lambda} \sum_{n=-M}^\infty v_n \tq^{n} 
\end{equation}
for some vector-valued sequence $v_n$. Now we multiply both sides of the last equation by $P$ and define $\X_{[n]}=Pv_n$.
\end{proof}

\begin{remark}\rm
If $v$ is an eigenvector of $\rho(t_{\infty})$ with eigenvalue $\lambda$, then $v$ is an eigenvector of $P\tq^{\Lambda}P^{-1}$  
with eigenvalue $ \tq^{\mu(\lambda)}$. 
 Since the eigenvectors of $\rho(t_{\infty})$ span $\C^m$, this implies that the Fourier expansion at~(\ref{eqn:expansion adm}) does not depend on the choice of the diagonalizing matrix.
\end{remark}

If $\X$ is a holomorphic vvaf then all terms of the sum~\eqref{fourier_series_vvaf} with $n<0$ must vanish. Indeed, if some of these terms did not vanish, the infinite series would grow at least as $\exp\left(2\pi y/h\right)$ as $y\to\infty$, so $\X(\tau)$ would tend to $\infty$ as $y\to\infty$, contradicting our definition of holomorphic automorphic forms.
Similarly, if $\X$ is a cusp form, we may take $\mu(\lambda)$ such that $0<\mu(\lambda)\le1$ for each eigenvalue $\lambda$ of $\rho(t_{\infty})$ (so now $\mu(\lambda)$ might be $1$, but not $0$). Then all the terms with $n\leq0$ vanish. So, for cusp forms, the infinite series in~\eqref{fourier_series_vvaf} is bounded, while the matrix $P\tq^\Lambda P^{-1}$ approaches to $0$ exponentially as $y\to\infty$. Hence $\X(\tau)\to0$ exponentially as $y\to\infty$.

\begin{defn} \rm
The \emph{Fourier expansion of $\X$ at $\infty$} (with respect to the choice of $\mathrm{diag}\left(\mu(\lambda_1),\mu(\lambda_2),\cdots,\mu(\lambda_m)\right)$ is given by~\eqref{fourier_series_vvaf}. The coefficients $\X_{[n]}$ are known as \emph{Fourier coefficients of} $\X$. The \emph{Fourier coefficients of $\X$ at a cusp $\cu\in\R$} are defined to be the Fourier coefficients of $\X|_kA_\cu$. 
\end{defn}

\subsection{Logarithmic vvaf}  In this section, we shall generalize the notion of admissible vvaf and study their growth. Following the usual set up, let $\rG$ be a Fuchsian group of the first kind, $k$ be an even integer, $\rho:\rG\to \mr{GL}_m(\C)$ be a representation and $\X:\h\to\C^m$ be a vector-valued meromorphic function. Suppose that $\X(\tau)$ satisfies the functional behavior $ \X|_k\g = \rho(\g) \X,~\forall \g \in \rG.$ We are interested in the case when $\rho$ is not necessarily admissible and all eigenvalues of $\rho(\gamma)$ are unitary for every parabolic element $\gamma\in \rG$. Such a vvaf $\X$ will be called as \emph{logarithmic vvaf}. We shall be discussing the properties and features of logarithmic vvaf, following~\cite{Knopp3}, required to prove Theorem~\ref{maintheorem}. Let us first consider the space $$W=\text{Span}_{\mathbb{C}}\left\{\X_i(\tau)\mid 0\leq i\leq m-1\right\}.$$ 
Note that $W$ has dimension at most $m$ over $\mathbb{C},$ and it is invariant the action of $t_{\infty}.$ In other words, we can consider $\rho(t_{\infty}):  W\to W$ defined by  $ \X_{i}(\tau) \mapsto \X_{i}(\tau+h).$ With respect to the basis $\left\{\X_i(\tau)\mid 0\leq i\leq m-1\right\}_{0\leq 1\leq m-1}$, or possibly a subset of this if they are linearly dependent, we may assume that $\rho(t_{\infty})$ is in the Jordan canonical form
\[ \begin{psmallmatrix}
   J_{m(\lambda_1),\lambda_1} & & \\
    & J_{m(\lambda_2),\lambda_2} & &\\
    & & \ddots &\\ 
    & & & J_{m(\lambda_k),\lambda_k}\\ 
  \end{psmallmatrix},\]
  where the Jordan block $J_{m_i,\lambda_i}$ is defined to be 
{\small $$\begin{psmallmatrix}
   \lambda_i & & &  &\\
    \lambda_i &  \ddots&  & &\\ 
     & \ddots &  & \ddots &\\
     &   & \lambda_i &  &\lambda_i\\ 
 \end{psmallmatrix},$$ }
which is conjugate to the canonical Jordan block, and $m(\lambda)$ is the multiplicity of the eigenvalue $\lambda.$ We shall denote $\Sp$  to be the set of all eigenvalues of $\rho(t_{\infty}).$

\begin{lem}\label{lem:LC3} Let $\X$ be a holomorphic function on $\mathbb{H}$ such that $\X(\tau+h)=\rho(t_\infty)\X(\tau)$, then for each eigenvalue $\lambda$ of $\rho(t_{\infty})$ there are $\tq$-expansions $h_{\lambda,j}(\tau)=\sum_{n\in \mathbb{Z}} \X_{[\lambda,j,n]}\tq^{n+\mu(\lambda)},~0\leq j\le m(\lambda)-1$ such that 
\begin{equation} \label{fourier_expansion_X_i}
\X(\tau)=\sum_{\lambda \in \Sp}\sum_{j=0}^{m(\lambda)-1}(\log \tq)^{j}h_{\lambda,j}(\tau),
\end{equation}
\end{lem}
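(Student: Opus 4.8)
The plan is to reduce the functional equation $\X(\tau+h)=\rho(t_\infty)\X(\tau)$ to a genuine periodicity statement by absorbing the monodromy $\rho(t_\infty)$ into an explicit matrix-valued prefactor, and then to read off the logarithmic terms from the Jordan structure of that prefactor. Since $\rho(t_\infty)\in\mr{GL}_m(\C)$ is invertible, I would first choose a logarithm adapted to its Jordan decomposition. On the generalized eigenspace $V_\lambda$ attached to $\lambda\in\Sp$ one has $\rho(t_\infty)|_{V_\lambda}=\lambda(I+\mathcal N_\lambda)$ with $\mathcal N_\lambda$ nilpotent of index at most $m(\lambda)$; setting $N_\lambda=\frac{1}{2\pi\Ii}\log(I+\mathcal N_\lambda)$, which is nilpotent because the logarithm of a unipotent matrix is a nilpotent polynomial, and $M=\sum_{\lambda\in\Sp}(\mu(\lambda)I+N_\lambda)\Pi_\lambda$, where $\Pi_\lambda$ is the projection onto $V_\lambda$, one checks directly that $\exp(2\pi\Ii M)=\rho(t_\infty)$ and that $M$ has eigenvalues exactly the $\mu(\lambda)$.

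Next I would define $G(\tau)=\tq^{-M}\X(\tau)$, where $\tq^{-M}:=\exp(-M\log\tq)=\exp(-2\pi\Ii M\tau/h)$. Since $\log\tq=2\pi\Ii\tau/h$ is single-valued and holomorphic on $\h$, the matrix $\tq^{-M}$ is single-valued and holomorphic, so $G$ is holomorphic on $\h$. The key computation is periodicity: using $\exp(-2\pi\Ii M)=\rho(t_\infty)^{-1}$ together with the functional equation gives
\[G(\tau+h)=\tq^{-M}\exp(-2\pi\Ii M)\rho(t_\infty)\X(\tau)=\tq^{-M}\X(\tau)=G(\tau),\]
where all factors commute since they are functions of $M$. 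Being holomorphic and $h$-periodic, $G$ descends to a holomorphic function on the punctured disc $0<|\tq|<1$ and hence admits a Laurent expansion $G(\tau)=\sum_{n\in\Z}G_n\tq^{n}$ with $G_n\in\C^m$; note that no growth hypothesis is required here, so the sum genuinely runs over all $n\in\Z$, matching the statement.

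Finally I would recover $\X=\tq^{M}G$ and expand the prefactor on each generalized eigenspace. On $V_\lambda$ one has $\tq^{M}=\tq^{\mu(\lambda)}\exp(N_\lambda\log\tq)=\tq^{\mu(\lambda)}\sum_{j=0}^{m(\lambda)-1}\frac{(\log\tq)^{j}}{j!}N_\lambda^{\,j}$, the sum being finite because $N_\lambda^{\,m(\lambda)}=0$. Summing over $\lambda\in\Sp$ and grouping by the power of $\log\tq$ yields $\X(\tau)=\sum_{\lambda\in\Sp}\sum_{j=0}^{m(\lambda)-1}(\log\tq)^{j}h_{\lambda,j}(\tau)$ with $h_{\lambda,j}(\tau)=\frac{1}{j!}\tq^{\mu(\lambda)}N_\lambda^{\,j}\Pi_\lambda G(\tau)$; since $\Pi_\lambda G(\tau)=\sum_{n}\Pi_\lambda G_n\tq^{n}$, this is exactly of the claimed form $\sum_{n\in\Z}\X_{[\lambda,j,n]}\tq^{n+\mu(\lambda)}$ upon setting $\X_{[\lambda,j,n]}=\frac{1}{j!}N_\lambda^{\,j}\Pi_\lambda G_n$.

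The step I expect to be the main obstacle is the careful construction of $M$ and the verification of periodicity: one must ensure the chosen logarithm respects the generalized-eigenspace decomposition so that the scalar exponents are precisely the $\mu(\lambda)$ and the nilpotency index of $N_\lambda$ does not exceed $m(\lambda)$, which is what forces the range $0\le j\le m(\lambda)-1$ and keeps the $\exp(N_\lambda\log\tq)$ expansion finite. A cleaner but equivalent route, avoiding matrix logarithms, is to reduce to a single Jordan block and induct on its size, peeling off the scalar equation $f(\tau+h)=\lambda f(\tau)$ for the leading component and introducing one factor of $\log\tq$ at each step to solve the inhomogeneous recursion for the remaining components.
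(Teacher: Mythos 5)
Your proof is correct, but it takes a genuinely different route from the paper. You absorb the monodromy into a matrix power $\tq^{M}$ with $\exp(2\pi\Ii M)=\rho(t_\infty)$, chosen block-by-block as $\mu(\lambda)I+N_\lambda$ with $N_\lambda$ nilpotent of index at most $m(\lambda)$, so that $G=\tq^{-M}\X$ is genuinely $h$-periodic and admits a Laurent expansion; the finite expansion of $\exp(N_\lambda\log\tq)$ then produces the powers $(\log\tq)^j$, $0\le j\le m(\lambda)-1$. The paper instead follows Knopp and Mason: it writes out the component recursion $\X_{i,\lambda}(\tau+h)=\lambda(\X_{i,\lambda}(\tau)+\X_{i-1,\lambda}(\tau))$ for a single Jordan block, introduces the polynomial integrating factors $\tilde h_{i,\lambda}(\tau)=\sum_{j=0}^{i}(-1)^{j}\binom{\tau/h+j-1}{j}\X_{i-j,\lambda}(\tau)$ (citing Knopp--Mason for the fact that these are periodic up to the scalar $\lambda$ and hence have $\tq$-expansions), inverts to get $\X_{i,\lambda}=\sum_j\binom{\tau}{j}\tilde h_{i-j,\lambda}$, and then uses that the span of the binomial polynomials $\binom{\tau}{j}$ equals the span of the powers $(\log\tq)^j$ since $\log\tq=2\pi\Ii\tau/h$. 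The two are essentially the scalar and matrix incarnations of the same idea (your $\frac{1}{j!}N_\lambda^{j}$ plays the role of their binomial coefficients), but your version is more self-contained, avoids the appeal to an external reference, and is cleanly faithful to the statement as given --- in particular you correctly note that no growth hypothesis is needed, so the Laurent index genuinely runs over all of $\Z$, whereas the paper's proof quietly imports a lower bound $n+\mu(\lambda)\ge0$ from the cited argument. What the paper's route buys is the explicit intermediate functions $\tilde h_{i,\lambda}$ and the identity \eqref{eqn:matrix}, which are reused later in the growth estimates of Section~\ref{se:logvvaf}; if you adopted your proof you would need to restate those bounds in terms of $N_\lambda^{j}\Pi_\lambda G$.
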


\begin{proof} We start by writing
\[\Big\{\X_{i}(\tau)\Big\}_{0\leq i\leq m-1}=\bigsqcup_{\substack{\lambda\in \Sp, \\ 0\leq i\leq m(\lambda)-1}} \Big\{\X_{i, \lambda}(\tau)\Big\},\]
 such that for each eigenvalue $\lambda,$ we have $\rho(t_{\infty})$ is the single block $J_{m(\lambda),\lambda}$ when acting on the space generated by $\{\X_{i,\lambda}\}_{0\leq i\leq m(\lambda)-1}$. 

For each $\lambda$ we can now write,
\[\X_{i,\lambda}(\tau+h)=\lambda(\X_{i,\lambda}(\tau)+\X_{i-1,\lambda}(\tau)),~0\leq i\leq m(\lambda)-1,\]
where we set $\X_{-1,\lambda}=0.$ Define,
\[\tilde{h}_{i,\lambda}(\tau)=\sum_{j=0}^{i}(-1)^{j}\binom{\tau/h+j-1}{j}\X_{i-j,\lambda}(\tau),~0\leq i \leq m(\lambda)-1.\]
Following the argument at page 265 of~\cite{Knopp3}, we see that each $\tilde{h}_{i,\lambda}$ has a convergent $\tq$-expansion of type $\sum_{n\in \mathbb{Z},n+\lambda\geq 0} a_i(n)\tq^{n+\mu(\lambda)}, 0\leq i \leq m(\lambda)-1$ and
\begin{equation}\label{eqn:matrix}
    \X_{i,\lambda}(\tau)=\sum_{j=0}^{i}\binom{\tau}{j}\tilde{h}_{i-j,\lambda}(\tau),~0\leq i\leq m(\lambda)-1.
\end{equation}
Now note that,
\begin{equation*}\label{eqn:span}
\mathrm{Span}_{\mathbb{C}}\left\{\binom{\tau}{j} \mid 0\leq j\leq m(\lambda)-1\right\}=
\mathrm{Span}_{\mathbb{C}} \left\{(\log \tq)^j \mid 0\leq j\leq m(\lambda)-1\right\},
\end{equation*}
because $2\pi i\tau/h=\log \tq.$ It now follows from~(\ref{eqn:matrix}) that there exists a matrix $H_{\lambda}(\tau),$ whose entries are written in terms of $\tilde{h}_{j,\lambda}$'s, such that 
\begin{align*}
    \begin{pmatrix}
           \X_{0,\lambda}(\tau) \\
           \X_{1,\lambda}(\tau) \\
           \vdots \\
           \X_{m(\lambda)-1,\lambda}(\tau)
         \end{pmatrix} &= H_{\lambda}(\tau) A\begin{pmatrix}
           1 \\
           (\log \tq) \\
           \vdots \\
           (\log \tq)^{m(\lambda)-1}
         \end{pmatrix},
  \end{align*}
for some $A\in \text{GL}_m(\mathbb{C}).$ We can therefore write for eigenvalue $\lambda$ and $i\in\{0,1,\cdots,m(\lambda)-1\}$ that
\[\X_{i,\lambda}(\tau)=\sum_{j=0}^{m(\lambda)-1}(\log \tq)^{j}h_{i,j,\lambda}(\tau),\]
where each $h_{i,j,\lambda}(\tau)$ is of form $\sum_{n\geq 0}\X_{[i,j,\lambda,n]}\widetilde{q}^{n+\mu(\lambda)}.$ The result is now proved by taking $h_{\lambda,j}(\tau)=\sum_{0\leq i\leq m(\lambda)-1} h_{i,j,\lambda}(\tau)e_{\lambda, i},$ where $e_{\lambda, i}$ is an element of the canonical basis of $\C^m$.
 \end{proof}
 \begin{note}\rm$\;$
 The lemma above shows that how to get a logarithmic expansion of $\X(\tau)$ at $\infty$ when $\rho(t_{\infty})$ is in the Jordan canonical form. For a general $\rho$, let $P\rho(t_\infty)P^{-1}$ be in the Jordan canonical form, then $\Y=P\X$ is a logarithmic vvaf for $P\rho P^{-1}$. Since $P\rho(t_\infty)P^{-1}$ is in the Jordan canonical form, we have 
$$\Y(\tau)=\sum_{\lambda \in \Sp} \sum_{j=0}^{m(\lambda)-1}(\log \tq)^j\sum_{n=-M}^\infty\tq^{n+\mu(\lambda)}v_{[\lambda,j,n]}$$ 
where $v_{[\lambda,j,n]}$ denotes a vector. Then we need only to multiply this by $P^{-1}$. Since multiplying by $P^{-1}$ will mix the components of $v_{[\lambda,j,n]}$, we have $\tq^{n+\mu(\lambda)}$ for all the eigenvalues $\lambda$.

\par To get such an expansion around other cusp $\cu,$ one needs to get an expansion of $\X|_kA_{\cu}$ around $\infty$, as we did in the admissible case. Having this in our hand, we are now ready to describe the vector-valued automorphic forms when the corresponding representation is not admissible. 
\end{note}
In Lemma~\ref{lem:LC3} and the remark above, we have seen how to get a logarithmic expansion of $\X(\tau).$ We are interested to study the growth when all the components $\X_i(\tau)$ are holomorphic at the cusps. Equivalently, we now have the following definition, which will be used in the rest of the article.
\begin{defn}\label{def:logvvaf} We say $\X$ is holomorphic at the cusp $\infty$ when it has an expansion of the form
$$\X(\tau)=\sum_{\lambda\in \Sp}\sum_{j=0}^{m(\lambda)-1}(\log \tq)^j\sum_{n=0}^\infty\X_{[\lambda,j,n]} \tq^{n+\mu(\lambda)}$$ 
where  each $\X_{[\lambda,j,n]}$ is a vector. We say $\X$ is holomorphic at the cusp $\cu$ when $\X|_kA_\cu$ is holomorphic at the cusp $\infty$. If $\X$ is holomorphic at all cusps, then we say that $\X$ is a holomorphic vvaf. Moreover, we say that $\X(\tau)$ vanishes at the cusp $\infty$ if it has an expansion of the form
$$\X(\tau)=\sum_{\lambda \in \Sp} \sum_{j=0}^{m(\lambda)-1}(\log\tq)^j\sum_{n=1}^\infty\X_{[\lambda,j,n]} \tq^{n+\mu(\lambda)}.$$ 
In other words, all of the associated $\widetilde{q}$-expansions of $\X(\tau)$ have exponential decay as the imaginary part of $\tau$ goes to $\infty.$ Similarly, we say that $\X(\tau)$ vanishes at the cusp $\cu$ if $\X|_kA_{\cu}$ vanishes at the cusp $\infty,$ and then we say that $\X(\tau)$ is a cusp form if $\X(\tau)$ vanishes at all cusps of $\rG.$
\end{defn}


\section{Growth for admissible vector-valued automorphic forms}\label{af}
Before proving Theorem~\ref{maintheorem}, we briefly summarize our strategy. As the cusp may be moved to $\infty$ using $A_\cu$, we may assume that $\cu = \infty$. Applying a theorem of Eichler, we shall show the existence of $\alpha$ such that $$\|\rho(\gamma)\|=\norm{\rho\tmt{a}{b}{c}{d}}\ll(c^2+d^2)^\alpha$$ when $\rho$ is admissible. We choose a bounded fundamental domain for $\rG$. An arbitrary $\tau\in\h$ is picked, with the aim of bounding $\|\X(\tau)\|$. Then we take $\gamma\in \rG$ and $z$ in the fundamental domain such that $\tau = \gamma z$. The vectors $\X(\tau)$ and $\X(z)$ are related via the functional equation, in which there appears $\rho(\gamma)$, whose norm  will be estimated. Using the Fourier expansion at any cusp of $\rG$, one can estimate $\X(z)$ as $z$ approaches to the cusp within the fundamental domain. It turns out that, in the case of cusp forms, $\X(z)$ decays sufficiently fast to ensure that $y^{k/2+\alpha}\X(\tau)$ is bounded in $\h$, where $y=\mr{Im}\tau$. Then from the integral representation of the Fourier coefficients we conclude the proof.

In the case of holomorphic vector-valued automorphic forms, $\X(z)$ may grow like $(\mr{Im}z)^{-k}$ as $z$ approaches to a cusp, so we get a weaker bound, that is, $y^{k+2\alpha}\X(\tau)$ is bounded when $y$ is sufficiently small. In addition, $j(\gamma, z)$ appears in the computations, so we need Corollary~\ref{lowerboundjgammaz} to complete the proof. Also, because of this corollary which only holds for cusps inequivalent to $\infty$, the case of $\infty$ must be treated separately.

Our proof of the main theorem depends on the following result. 

\begin{lem}\label{lemma1}\cite[Lemma 6]{K}
For any $\gamma=\tmt{a}{b}{c}{d}$ in $\rG$, there exists an integer $n$ such that the real numbers $\tilde a$ and $\tilde b$ defined by $\gamma=\tmt{1}{h}{0}{1}^n\tmt{\tilde a}{\tilde b}{c}{d}$ satisfy $\tilde a^2+\tilde b^2\le k_{1}(c^2+d^2)$, where $k_{1}$ is a constant depending only on $\rG$.
\end{lem}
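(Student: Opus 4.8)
The plan is to reduce the statement to an elementary optimization in the integer $n$ together with a single geometric input about $\rG$. First I would perform the matrix multiplication: since $\tmt{1}{h}{0}{1}^n=\tmt{1}{nh}{0}{1}$, the prescription $\gamma=\tmt{1}{h}{0}{1}^n\tmt{\tilde a}{\tilde b}{c}{d}$ leaves the bottom row $(c,d)$ untouched and forces $\tilde a=a-nhc$ and $\tilde b=b-nhd$. Hence
\[
\tilde a^2+\tilde b^2=(a-nhc)^2+(b-nhd)^2=:f(n)
\]
is a quadratic polynomial in $n$ with leading coefficient $h^2(c^2+d^2)$, and the whole problem becomes: choose $n\in\Z$ so that $f(n)\le k_1(c^2+d^2)$.

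Next I would minimize $f$ over the reals. Completing the square produces the real minimizer $n_0=\frac{ac+bd}{h(c^2+d^2)}$, and, crucially, the minimum value collapses once we use $ad-bc=1$: Lagrange's identity $(a^2+b^2)(c^2+d^2)-(ac+bd)^2=(ad-bc)^2=1$ gives $f(n_0)=\frac{1}{c^2+d^2}$. Taking $n$ to be the nearest integer to $n_0$, so that $|n-n_0|\le\tfrac12$, and using $f(n)=f(n_0)+h^2(c^2+d^2)(n-n_0)^2$, I obtain
\[
\tilde a^2+\tilde b^2=f(n)\le\frac{1}{c^2+d^2}+\frac{h^2}{4}(c^2+d^2).
\]

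The main obstacle, and the only place where genuine information about $\rG$ enters, is controlling the term $\frac{1}{c^2+d^2}$: I must produce a constant $\delta>0$, depending only on $\rG$, with $c^2+d^2\ge\delta$ for every $\tmt{a}{b}{c}{d}\in\rG$. Since $c^2+d^2=|ci+d|^2=\bigl(\mr{Im}(\gamma\cdot i)\bigr)^{-1}$, this is equivalent to the orbit $\rG\cdot i$ having bounded imaginary part, and here I would use that $\infty$ is a cusp of $\rG$. Standard cusp-neighborhood theory gives a height $Y_0$ for which the horoball $\{\,\mr{Im}\,\tau>Y_0\}$ is precisely invariant under the cyclic stabilizer $\rG_\infty=\langle t^h\rangle$, so its image in $\rG\backslash\h^{*}_{\rG}$ is an embedded cusp neighborhood; enlarging $Y_0$ if necessary, this neighborhood misses the interior point $\pi(i)$. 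If some $\gamma i$ had imaginary part exceeding $Y_0$ it would lie in this horoball, forcing $\pi(i)=\pi(\gamma i)$ into the cusp neighborhood, a contradiction. Hence $\mr{Im}(\gamma i)\le Y_0$ and $c^2+d^2\ge\delta:=1/Y_0$. (For the modular group this step is trivial, since $c,d\in\Z$ already give $c^2+d^2\ge1$; the content is in the passage to a general Fuchsian group of the first kind, where discreteness alone is not enough and one genuinely needs $\infty$ to be a cusp rather than a conical limit point.)

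Finally, with the lower bound in hand the proof finishes quickly: because $c^2+d^2\ge\delta$ we have $\frac{1}{c^2+d^2}\le\frac{1}{\delta^2}(c^2+d^2)$, so
\[
\tilde a^2+\tilde b^2\le\Bigl(\frac{1}{\delta^2}+\frac{h^2}{4}\Bigr)(c^2+d^2),
\]
and $k_1:=\frac{1}{\delta^2}+\frac{h^2}{4}$ depends only on $\rG$, through its cusp width $h$ and the orbit-height constant $\delta$. This completes the argument.
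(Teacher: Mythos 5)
Your argument is correct. Note that the paper itself gives no proof of this lemma --- it is quoted verbatim from Knopp \cite[Lemma 6]{K} --- so what you have supplied is a self-contained proof along the standard lines: the computation $\tilde a=a-nhc$, $\tilde b=b-nhd$, the minimization of the quadratic $f(n)$ via Lagrange's identity $(a^2+b^2)(c^2+d^2)-(ac+bd)^2=(ad-bc)^2=1$, and the choice of $n$ as the nearest integer to $n_0$ are all exactly right, and the whole lemma is correctly reduced to the single geometric input $c^2+d^2\ge\delta>0$ uniformly over $\rG$. Two small remarks on that last step. First, your horoball argument is fine but leans on more than ``precise invariance'': what you actually use is that the images $\pi(\{\mr{Im}\,\tau>Y\})$ form a neighborhood basis of the cusp in the Hausdorff quotient $\rG\backslash\h^*_{\rG}$, which is what lets you enlarge $Y_0$ until $\pi(i)$ is excluded; precise invariance alone says nothing about orbits of points outside the horoball. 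It would be cleaner (and more in the spirit of this paper, which already invokes it in the proof of Lemma~\ref{lowerboundj}) to quote Lemma 1.7.3 of~\cite{Miyake}: for $\gamma=\tmt{a}{b}{c}{d}\in\rG$ either $c=0$, in which case $\gamma\in\rG_\infty=\langle t_\infty\rangle$ and $d=\pm1$, or $|c|\ge h^{-1}$; either way $c^2+d^2\ge\min\{1,h^{-2}\}$. Second, your constant $k_1=\delta^{-2}+h^2/4$ is slightly wasteful --- $1/(c^2+d^2)\le\delta^{-1}$ already suffices after absorbing it into the other term --- but this is harmless since only the existence of some $k_1$ depending on $\rG$ is needed.
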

Consequently, we have a polynomial-growth of $\rho$ as follows.
\begin{lem}\label{lem:alpha} For any $\gamma=\tmt{a}{b}{c}{d}$ in $\rG$, we have $\norm{\rho(\gamma)} \ll (c^2+d^2)^{\alpha},$
where $\alpha=O\left(\log (M_\rG)\right)$ and $M_\rG=\max\left\{\norm{\rho(\gamma)}\right\}_{\gamma \in \rG_{\mathrm{Eichler}}}.$ 
\end{lem}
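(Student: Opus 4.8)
The plan is to combine Eichler's structure theorem with the admissibility of $\rho$, which keeps $\rho$ bounded on the cyclic subgroups generated by parabolic elements, and then to feed in Lemma~\ref{lemma1} to upgrade a bound in terms of $\norm{\gamma'}$ to the sharper bound involving only $c^2+d^2$.

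First I would establish an auxiliary estimate of the shape $\norm{\rho(\delta)}\ll\norm{\delta}^{\alpha_0}$, valid for every $\delta\in\rG$, with $\alpha_0=O(\log M_\rG)$. By Eichler's theorem write $\delta=C_1C_2\cdots C_L$ with $L\le c_1\log\norm{\delta}+c_2$, where each $C_i$ either lies in $\rG_{\mathrm{Eichler}}$ or is a power of a parabolic element of $\rG_{\mathrm{Eichler}}$. Submultiplicativity of the (Frobenius) norm gives $\norm{\rho(\delta)}\le\prod_{i=1}^L\norm{\rho(C_i)}$, so it suffices to bound each factor by a single constant $M$ independent of $\delta$. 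For $C_i\in\rG_{\mathrm{Eichler}}$ this is immediate with $\norm{\rho(C_i)}\le M_\rG$. For $C_i=P^{m}$ a power of a parabolic $P\in\rG_{\mathrm{Eichler}}$, admissibility forces $\rho(P)$ to be diagonalizable and the hypothesis of the theorem forces its eigenvalues to be unitary; writing $\rho(P)=Q_P\,\mathrm{diag}(\cdots)\,Q_P^{-1}$ with unitary diagonal part, we get $\norm{\rho(P^m)}\le\norm{Q_P}\,\norm{Q_P^{-1}}$ uniformly in $m$. Taking $M$ to be the maximum of $M_\rG$ and these finitely many condition numbers yields $\norm{\rho(\delta)}\le M^L\ll\norm{\delta}^{c_1\log M}$, using $M^{\log\norm{\delta}}=\norm{\delta}^{\log M}$. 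This is the claimed estimate with $\alpha_0=c_1\log M=O(\log M_\rG)$, the parabolic condition numbers being absorbed into the constant.

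Next I would apply Lemma~\ref{lemma1} to $\gamma=\tmt{a}{b}{c}{d}$: there is an integer $n$ with $\gamma=t_\infty^{\,n}\gamma'$, where $\gamma'=\tmt{\tilde a}{\tilde b}{c}{d}$ satisfies $\tilde a^2+\tilde b^2\le k_1(c^2+d^2)$, so that $\norm{\gamma'}^2=\tilde a^2+\tilde b^2+c^2+d^2\le(k_1+1)(c^2+d^2)$. Since $t_\infty$ is parabolic and $\rho$ is admissible, $\rho(t_\infty)$ is diagonalizable with unitary eigenvalues, whence $\norm{\rho(t_\infty)^n}$ is bounded by a constant independent of $n$, exactly as for the parabolic factors above. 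Therefore, applying the auxiliary estimate to $\gamma'$,
\[\norm{\rho(\gamma)}\le\norm{\rho(t_\infty)^n}\,\norm{\rho(\gamma')}\ll\norm{\gamma'}^{\alpha_0}\ll(c^2+d^2)^{\alpha_0/2},\]
and setting $\alpha=\alpha_0/2=O(\log M_\rG)$ gives the assertion.

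The main obstacle is conceptual rather than computational: Eichler's decomposition leaves the exponents of the parabolic factors uncontrolled, which would normally destroy the product bound $M^L$. The resolution, and the reason admissibility is precisely the right hypothesis here, is that a diagonalizable matrix with unitary eigenvalues has powers of uniformly bounded norm, so each parabolic factor contributes only an absolute constant. This is exactly what fails in the logarithmic setting, where non-trivial Jordan blocks make $\norm{\rho(P^m)}$ grow polynomially in $m$; that is why the logarithmic case needs the separate and more delicate analysis of Section~\ref{se:logvvaf}, built on the refined decomposition of Lemma~\ref{lem:beardon}.
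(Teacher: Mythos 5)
Your proof is correct and follows essentially the same route as the paper: reduce to $\gamma'=\tmt{\tilde a}{\tilde b}{c}{d}$ via Lemma~\ref{lemma1} and uniform boundedness of $\norm{\rho(t_\infty)^n}$, then apply Eichler's decomposition to $\gamma'$ and take $\alpha=O(\log M_\rG)$. If anything, you are slightly more careful than the paper in one spot, namely in explicitly bounding the parabolic-power factors $\norm{\rho(P^m)}$ inside the Eichler decomposition by the condition number of the diagonalizing matrix (using admissibility together with unitarity of the eigenvalues), a point the paper's bound $\norm{\rho(\gamma)}\ll M_\rG^{L}$ passes over silently.
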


\begin{proof}
From the previous lemma we can write $\gamma=\tmt{1}{h}{0}{1}^n\tmt{\tilde a}{\tilde b}{c}{d},$
such that $\tilde a^2+\tilde b^2 \leq k_1 (c^2+d^2),$ where $k_1$ is some constant depending on $\rG.$ The admissibility of $\rho$ implies that the powers of $\norm{\rho\left(\tmt{1}{h}{0}{1}\right)}$ are uniformly bounded, and in particular, $\norm{\rho(\gamma)}\ll \norm{\rho\left(\tmt{\tilde a}{\tilde b}{c}{d}\right)}.$ Now applying the result of Eichler on $\tmt{\tilde a}{\tilde b}{c}{d},$ we get $\norm{\rho(\gamma)}\ll M_G^{L},$
where $L\leq C_1\log (\tilde a^2+\tilde b^2+c^2+d^2)+C_2\leq C_1\log \left((k_{1}+1)(c^2+d^2)\right)+C_2,$ and $C_1,C_2$ are some constants depending on $\rG.$ In particular, we then have
\[\norm{\rho(\gamma)}\leq M_\rG^{C_2}\times \left((k_{1}+1)(c^2+d^2)\right)^{C_1 \log \left(M_\rG\right)}.\]
In particular, we can now take $\alpha=C_1\log (M_\rG)$ to complete the proof.
\end{proof}

Now, we establish the bound for the Fourier coefficients of cusp forms as stated in Theorem~\ref{maintheorem}.

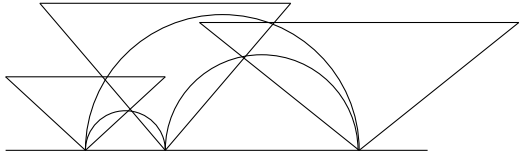
\begin{figure}
\centering
\begin{tikzpicture}[scale=1.5]
\draw (-1.7,0)--(2,0); 
\draw (-1.0, 0) arc [radius=1.2, start angle=180, end angle=0];
\draw (-1.0, 0) arc [radius=0.35, start angle=180, end angle=0]; 
\draw (-0.3, 0)  arc [radius=0.845, start angle=180, end angle=0];
\draw (-1.0, 0)--(-1.7, 0.65);
\draw (-1.0, 0)--(-0.3, 0.65);
\draw (-1.7, 0.65)--(-0.3, 0.65);
\draw (-0.3, 0)--(-1.4, 1.3);
\draw (-0.3, 0)--(0.8, 1.3);
\draw (-1.4, 1.3)--(0.8, 1.3);
\draw (1.4, 0)--(0, 1.13);
\draw (1.4, 0)--(2.8, 1.13);
\draw (0, 1.13)--(2.8, 1.13);
\end{tikzpicture}
\caption{Fundamental domain of a Fuchsian group covered by triangles $\mathcal S(\cu,v_0,K)$.}\label{ttypes}
\end{figure}

\subsection{Proof of part (b) of Theorem~\ref{maintheorem} for vector-valued cusp forms}
With the polynomial-growth of $\rho$ obtained from the previous lemma (together with the functional equation), we want to relate $\X(\gamma z)$ to $\X(z)$. Denoting $\tau = \gamma z$, $\|\X(\tau)\|=\|(cz+d)^k\rho(\g)\X(z)\| \ll|cz+d|^k(c^2+d^2)^\alpha\|\X(z)\|$. Let $z=u+iv$. Using the elementary inequality $c^2+d^2\le|cz+d|^2(1+4|z|^2)/v^2$, proven by Knopp in~\cite[lemma 4]{K}, one obtains
\begin{equation}\label{bound_X}
\|\X(\tau)\|\ll|cz+d|^{k+2\alpha}(1+4|z|^2)^\alpha v^{-2\alpha}\|\X(z)\|.
\end{equation}
Applying the identity $y=\mr{Im}\gamma z = \frac{v}{|cz+d|^2}$, we get 
\begin{equation}\label{bound_product}
y^{k/2+\alpha}\|\X(\tau)\|\ll(1+4|z|^2)^\alpha v^{k/2-\alpha}\|\X(z)\|.
\end{equation}
At this point, it is convenient to restrict $z$ to a fundamental domain of $\rG$ which does not depend on $\tau$. Since $\rG$ is a Fuchsian group of the first kind, $\rG\backslash \h_{\rG}^{*}$ is compact and there are only finitely many equivalence classes of cusps, and a bounded fundamental domain that may be partitioned into a finite set of pieces, see Figure 1. More precisely, the constants $K,v_0,$ and a finite set of cusps $\cu_{\rG}$ such that each such piece is contained in a triangle of the type $\{z\in\h:v<v_0\text{ and }|u-\cu|\leq Kv\}$, which we denote by $\mathcal S(\cu,v_0,K)$, where $\cu \in \Cu_{\rG}.$  Then, for $z$ in this fundamental domain,
\begin{equation}\label{another_bound}
y^{k/2+\alpha}\|\X(\tau)\|\ll v^{k/2-\alpha}\|\X(z)\|.
\end{equation}

Since $\X$ is a cusp form, if $\cu$ is any cusp, $\X|_kA_\cu$ decays exponentially as the imaginary part of its argument goes to infinity. We shall show this implies that, for any real number $\beta$, $\|\X(z)\|\ll v^\beta$ in $\mathcal S(\cu,v_0,K)$. To do so, let $\mathbb{Y}_\cu = \X|_kA_\cu$. Then $$\X(z) = \mathbb{Y}_\cu|_kA_\cu^{-1}(z)=(\cu-z)^{-k}\mathbb{Y}_\cu\left(\frac{1}{\cu-z}\right).$$ Since $|\cu-z|$ is comparable to $v$, $$\|\X(z)\|\ll v^{-k}\|\mathbb{Y}_\cu\left(\frac{1}{\cu-z}\right)\|.$$ Since $\mathbb{Y}_\cu$ decays more rapidly than the $(k+\beta)^{\text{th}}$-power of the imaginary part of its argument, we get $$\|\X(z)\|\ll  v^{-k}\left(\mr{Im}\frac{1}{\cu-z}\right)^{-k-\beta}.$$ Note that $$\mr{Im}\frac{1}{\cu-z}=\frac{v}{|\cu-z|^2}=\frac{v}{(\cu-u)^2+v^2},$$ whence, from the definition of $\mathcal S(\cu,v_0,K)$, $$\frac{1}{v}\ge \mr{Im}\frac{1}{\cu-z}\ge\frac{v}{K^2v^2+v^2}\gg\frac{1}{v}.$$ Therefore, $$\X(z)\ll v^{-k}v^{k+\beta}=v^\beta, ~\forall z \in \mathcal S(\cu,v_0,K).$$ Since the fundamental domain we chose is contained in a finite union of these sets, the bound holds in the fundamental domain as well. Taking $\beta = \alpha-k/2$ and using~\eqref{another_bound}, we see that $y^{k/2+\alpha}\|\X(\tau)\|$ is bounded in $\h$. Now note that the $i^{\text{th}}$-component of the $n^{\text{th}}$-Fourier coefficient is
\[\mathbb{X}_{[i,n]}=\frac{1}{h}\int_{0}^{h}\mathbb{X}_i(x+iy)\tq^{(-n-\mu(\lambda_{i}))} dx.\]
In particular, we then have
\[\mathbb{X}_{[i,n]} \ll y^{-k-2\alpha} e^{2\pi y(n+\mu(\lambda_{i})/h}.\]
Taking $y=\frac{1}{n+\mu(\lambda_{i})}$ we get the desired result.\qed

\subsection{Proof of Theorem~\ref{maintheorem} for  holomorphic vvaf}\label{se:vvhaf}
We now establish the growth for admissible holomorphic vvaf. To complete the proof of the theorem, we need the following lemma.

\begin{lem}\label{lowerboundj}
Let $\cu$ and $\infty$ be the cusps of the Fuchsian group $\rG$. Then either $\cu$ and $\infty$ are equivalent cusps or $\inf_{\gamma\in \rG} |j(\gamma,\cu)|>0$.
\end{lem}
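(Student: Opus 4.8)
The plan is to translate the analytic quantity $|j(\g,\cu)| = |c\cu+d|$ into the geometry of horoballs at the two cusps. First I would reduce to the case $\cu \in \R$ with $\cu \not\sim \infty$: if $\cu = \infty$ the two cusps are trivially equivalent, so the first alternative holds and there is nothing to prove. I would then record the basic dichotomy that drives everything: for $\g = \tmt{a}{b}{c}{d} \in \rG$ one has $j(\g,\cu) = c\cu + d = 0$ exactly when $\g\cu = \infty$, that is, exactly when $\cu$ and $\infty$ lie in the same $\rG$-orbit. Hence, assuming $\cu\not\sim\infty$, all of the numbers $j(\g,\cu)$ are nonzero, and the content of the lemma is to extract a \emph{uniform} positive lower bound for them.

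The main tool is a standard fact for Fuchsian groups of the first kind: since $\cu$ and $\infty$ are inequivalent cusps, they determine distinct punctures of the compact quotient $\rG\backslash\h^*_\rG$, and therefore admit disjoint embedded cusp neighbourhoods. Concretely, I would fix a horoball $B_\infty = \{\tau\in\h : \mr{Im}\,\tau > Y\}$ at $\infty$ and a horoball $B_\cu$ at $\cu$ of Euclidean diameter $\delta$, chosen small enough that $\g B_\cu \cap B_\infty = \emptyset$ for every $\g\in\rG$. The key computation is to track the image $\g B_\cu$. Writing $B_\cu = A_\cu\{\mr{Im}\,\tau > 1/\delta\}$ with $A_\cu = \pm\tmt{\cu}{-1}{1}{0}$ (whose lower-left entry is $1$), and observing that the lower-left entry of $\g A_\cu$ equals $c\cu+d$, one finds that $\g B_\cu$ is a horoball based at $\g\cu \in \R$ (it is not $\infty$, since $\cu\not\sim\infty$) of Euclidean diameter $\delta/|c\cu+d|^2$; this uses the elementary fact that a matrix with lower-left entry $C\neq 0$ sends $\{\mr{Im}\,\tau > Y\}$ to a horoball of diameter $1/(C^2Y)$.

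To finish, I would note that a horoball based at a real point reaches maximal height equal to its Euclidean diameter, so the disjointness $\g B_\cu\cap B_\infty = \emptyset$ forces $\delta/|c\cu+d|^2 \le Y$ for every $\g\in\rG$. Rearranging gives $|j(\g,\cu)| = |c\cu+d| \ge \sqrt{\delta/Y}$ uniformly in $\g$, whence $\inf_{\g\in\rG}|j(\g,\cu)| \ge \sqrt{\delta/Y} > 0$, which is the second alternative. (One may also remark that in the equivalent case $\cu\sim\infty$ the infimum is in fact attained and equal to $0$, so the dichotomy is sharp.)

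The step I expect to be the genuine obstacle is the existence of the disjoint horoball neighbourhoods, i.e.\ that $B_\cu$ and $B_\infty$ can be chosen with $\g B_\cu\cap B_\infty=\emptyset$ for \emph{all} $\g\in\rG$; everything afterwards is a direct computation. This is exactly where the hypothesis that $\rG$ be of the first kind enters (so the quotient is cofinite and inequivalent cusps genuinely have disjoint horoball neighbourhoods), and it can be justified either via the embeddedness of sufficiently small cusp neighbourhoods in $\rG\backslash\h^*_\rG$, or by a contradiction argument: a sequence $\g_n$ with $|c_n\cu+d_n|\to 0$ would make the horoballs $\g_n B_\cu$ have Euclidean diameters tending to $\infty$, so they would eventually meet $B_\infty$, contradicting the discreteness of $\rG$.
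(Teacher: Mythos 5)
Your proof is correct, but it takes a genuinely different route from the paper's. The paper argues algebraically: it first reduces to $\cu=0$ by conjugating $\rG$ by the translation $\tau\mapsto\tau-\cu$, notes $j(\g,0)=d$, and then multiplies $\g$ on the right by powers of the parabolic $\tmt{1}{0}{h'}{1}$ fixing $0$ to force the lower-left entry into the window $|c+nh'd|\le|h'd|$; combining this with the Shimizu-type dichotomy from Miyake (either $c=0$ or $|c|\ge|h|^{-1}$, plus $|d|=1$ in the degenerate case) yields the explicit bound $|d|\ge\min\{1,|hh'|^{-1}\}$. You instead read $|c\cu+d|^{-2}$ as the Euclidean diameter of the translated horoball $\g B_\cu$ and derive the uniform lower bound from the disjointness of horoball neighbourhoods of inequivalent cusps; your diameter computation via the lower-left entry of $\g A_\cu$ is correct, and the conclusion $|c\cu+d|\ge\sqrt{\delta/Y}$ follows. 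The trade-off is that the paper's argument is self-contained modulo two concrete citations and produces an explicit constant in terms of the cusp widths, whereas yours outsources the real work to the standard fact that inequivalent cusps of a cofinite Fuchsian group admit $\rG$-disjoint horoball neighbourhoods --- a fact whose usual proof runs through exactly the Shimizu-type estimates the paper invokes, so the two arguments are ultimately cousins. One small caution: of your two suggested justifications for the disjointness, the first (embedded, disjoint cusp neighbourhoods in $\rG\backslash\h^*_\rG$) is solid, but the second is circular as stated --- a translate $\g_nB_\cu$ merely meeting $B_\infty$ does not by itself contradict discreteness; that is precisely the non-trivial content you are trying to establish. Rely on the first justification.
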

\begin{proof}
We begin with the case $\cu=0$. Note that, if $\gamma=\tmt{a}{b}{c}{d}$, then $j(\gamma,0)=d$. Now assume that $0$ and $\infty$ are not equivalent cusps of $\rG$. Then $d\neq0$ whenever $\tmt{a}{b}{c}{d}\in \rG$. Since $\infty$ is a cusp of $\rG$, there is a parabolic element in $\rG\backslash\{I\}$  whose lower left entry vanishes. Such an element necessarily equals $\tmt{1}{h}{0}{1}$ for some nonzero $h$. Using that $0$ is a cusp of $\rG$, we similarly obtain $\tmt{1}{0}{h'}{1}\in \rG$ for some nonzero $h'$. For any integer $n$, $$\tmt{a}{b}{c}{d}\tmt{1}{0}{h'}{1}^n=\tmt{a+nh'b\,\,}{b}{c+nh'd\,\,}{d}.$$ Since $h'd\neq0$, there is $n$ such that $|c+nh'd|\leq|h'd|$, namely the integer part of $-c/h'd$. By Lemma 1.7.3 of~\cite{Miyake}, either $c+nh'd=0$ or $|c+nh'd|\ge|h|^{-1}$. If $c+nh'd=0$, by part 2 of Theorem 1.5.4~ in ~\cite{Miyake} with $x=\infty$ and $\sigma$ equal to the identity, we have $|d|=1$. If $|c+nh'd|\ge|h|^{-1}$, by our choice of $n$, we get $|d|\geq|hh'|^{-1}$. So $\inf_{\gamma\in \rG} |j(\gamma,0)|\ge\min\{1,|hh'|^{-1}\} > 0$, as desired.

Now we show the claim for a general $\cu$. We shall move the cusp to the origin by means of the translation $B_\cu(\tau) = \tau - \cu.$ Observe that $0$ is a cusp of the Fuchsian group $B_\cu \rG B_\cu^{-1}$. Indeed, let $\delta\in \rG$ be a parabolic element such that $\delta\cu = \cu$. Then $B_\cu\delta B_\cu^{-1}$ is parabolic and $B_\cu\delta B_\cu^{-1}0 = B_\cu\delta\cu = B_\cu\cu = 0$. Similarly, $\infty$ is a cusp of $B_\cu \rG B_\cu^{-1}$. If $0$ and $\infty$ were equivalent cusps of this new group, there would exist $\delta\in B_\cu \rG B_\cu^{-1}$ such that $\delta0 = \infty$. Then we would have $B_\cu^{-1}\delta B_\cu\in \rG$ and $B_\cu^{-1}\delta B_\cu \cu = B_\cu^{-1}\delta0 = B_\cu^{-1}\infty = \infty$, so $\cu$ would be equivalent to $\infty$ as a cusp of $\rG$, a contradiction. We have established $0$ and $\infty$ are inequivalent cusps. From the case we have already proven, 
\begin{equation}\label{lower_bound_j}
\inf_{\gamma\in B_\cu \rG B_\cu^{-1}}|j(\gamma,0)|>0.
\end{equation}
Now let $\gamma\in \rG$ and $\tilde\gamma = B_\cu\gamma B_\cu^{-1}$. Then $\gamma\cu \neq\infty$ and $\tilde \gamma B_\cu = B_\cu\gamma$. Hence, using the definition of $j$ and~\eqref{eq:property-j}, we find that $ j(\tilde\gamma, 0) = j(\gamma, \cu)$.  Combining this with inequality~\eqref{lower_bound_j}, we conclude the proof.
\end{proof}

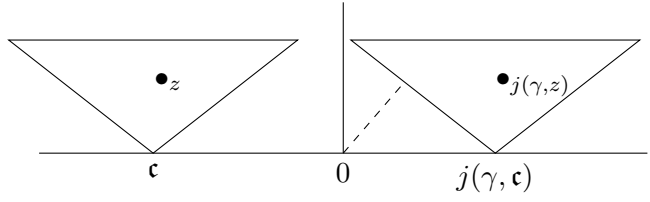
\begin{figure}
\centering
\begin{tikzpicture}
\draw (-4,0)--(4,0);
\draw (0.1,1.5) node[anchor=north]{\nonumber}
  -- (2,0) node[anchor=north]{$j(\gamma,\cu)$}
  -- (3.9,1.5) node[anchor=south]{\nonumber}
  -- cycle;
\draw (2.5,1.2) node[anchor=north]{$\bullet_{j(\gamma,z)}$};
\draw (0,0) node[anchor=north]{${0}$}--(0,2);  
\draw[dashed] (0,0)--(0.82,0.96);
\draw (-2.5,0) node[anchor=north]{${\cu}$}
  -- (-4.4,1.5) node[anchor=north]{\nonumber}
  -- (-0.6,1.5) node[anchor=south]{\nonumber}
  -- cycle;
\draw (-2.3,1.2) node[anchor=north]{$\bullet_z$};
\end{tikzpicture}
\caption{If $z$ lies in a given triangle $\mathcal S(\cu,v_0,K)$, then $j(\g,z)$ lies in a similar triangle, which helps to estimate its distance to the origin.}
\end{figure}

\begin{cor}\label{lowerboundjgammaz}
Let $\cu$ and $\infty$ be inequivalent cusps of the Fuchsian group $\rG$. Let $K$ and $v_0$ be positive real numbers. Then $|j(\gamma,z)|$ is bounded from below, for any $\gamma\in \rG$ and any $z\in\mathcal S(\cu, v_0, K)$.
\end{cor}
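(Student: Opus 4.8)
The plan is to leverage Lemma~\ref{lowerboundj} together with the geometric picture suggested by Figure~2, which shows that as $z$ ranges over the triangle $\mathcal S(\cu,v_0,K)$, the point $j(\gamma,z)=cz+d$ stays within a fixed cone around the value $j(\gamma,\cu)=c\cu+d$ that is bounded away from the origin. First I would observe that the map $z\mapsto cz+d$ is an affine transformation of the plane, so it sends the triangle $\mathcal S(\cu,v_0,K)$ to a translated and scaled triangle with one vertex at $j(\gamma,\cu)$. Writing $j(\gamma,z)-j(\gamma,\cu)=c(z-\cu)=c(u-\cu)+icv$, and using the constraints $|u-\cu|\le Kv$ and $0<v<v_0$ that define $\mathcal S(\cu,v_0,K)$, one controls the displacement of $j(\gamma,z)$ from $j(\gamma,\cu)$.

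The key quantitative step is to show the displacement is small relative to $|j(\gamma,\cu)|$. From Lemma~\ref{lowerboundj}, since $\cu$ and $\infty$ are inequivalent, we have $|j(\gamma,\cu)|=|c\cu+d|\ge\kappa$ for some constant $\kappa>0$ independent of $\gamma$. I would aim to bound $|j(\gamma,z)|$ below by comparing the imaginary parts: since $\mathrm{Im}\,j(\gamma,z)=cv$ while $\mathrm{Im}\,j(\gamma,\cu)=0$ (as $\cu$ is real), the relevant estimate must instead come from controlling the horizontal spread. The natural approach is to write $|j(\gamma,z)|^2=(cu+d)^2+(cv)^2$ and compare this with $|j(\gamma,\cu)|^2=(c\cu+d)^2$, showing that the ratio $|j(\gamma,z)|/|j(\gamma,\cu)|$ is bounded below by a constant depending only on $K$ and $v_0$. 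Concretely, $|cu+d|\ge|c\cu+d|-|c||u-\cu|$, and one must rule out cancellation; here the extra term $(cv)^2$ in $|j(\gamma,z)|^2$ helps when $|c|$ is large, while for $|c|$ bounded the term $|c||u-\cu|\le |c|Kv$ is controlled. I would split into cases according to whether $|c|v$ dominates or not, so that in each regime one of the two summands $(cu+d)^2$ or $(cv)^2$ furnishes a uniform lower bound proportional to $\kappa^2$.

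The main obstacle will be handling the interaction between the two summands when $j(\gamma,z)$ could in principle sit near the origin due to near-cancellation in the real part $cu+d$; I expect the resolution is precisely that whenever the real part nearly cancels, the imaginary part $cv$ must be comparably large (since $c\cu+d$ is bounded away from zero forces $c$ away from zero when $u$ is close to $\cu$), so the cone picture of Figure~2 is genuinely needed rather than a crude triangle inequality. Once the lower bound $|j(\gamma,z)|\gg\kappa$ is established uniformly in $\gamma\in\rG$ and $z\in\mathcal S(\cu,v_0,K)$, with the implied constant depending only on $K$ and $v_0$, the corollary follows immediately. Since the bound is uniform over all $\gamma$, no further work with the group structure is required beyond the single application of Lemma~\ref{lowerboundj}.
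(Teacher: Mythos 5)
Your proposal is correct and follows essentially the same route as the paper: both reduce to Lemma~\ref{lowerboundj} and then exploit the fact that $z\mapsto cz+d$ sends $\mathcal S(\cu,v_0,K)$ into a cone with apex at $j(\gamma,\cu)$, which is bounded away from the origin. Your case split on whether $|c|v$ dominates (giving $|cu+d|\ge\kappa-|c|Kv\ge\kappa/2$ in one regime and $|cv|\ge\kappa/(2K)$ in the other) is just a computational rendering of the paper's trigonometric bound $|j(\gamma,z)|\ge|j(\gamma,\cu)|/\sqrt{K^2+1}$, and it closes the argument correctly.
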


\begin{proof}
When $z$ varies in $\mathcal S(\cu,v_0,K)$, the point $j(\gamma,z)=cz+d$ varies in a similar triangle to $\mathcal S(\cu,v_0,K)$, with a vertex at $j(\gamma,\cu)$ (see Figure 2). In particular, $j(\gamma,z)$ lies between the straight lines through $j(\gamma, \cu)$ with slopes $1/K$ and $-1/K$. So the distance from $j(\gamma,z)$ to the origin is greater than the distance from some of these straight lines to the origin. From trigonometry, the latter is $|j(\gamma,\cu)|/\sqrt{K^2+1}$. Therefore $$|j(\gamma,z)|\ge\frac{|j(\gamma,\cu)|}{\sqrt{K^2+1}}.$$
The claim now follows from Lemma~\ref{lowerboundj}.
\end{proof}

We now give a proof of the bound of order $O(n^{k+2\alpha})$ in Theorem~\ref{maintheorem}.
\subsubsection{Proof of part (a) of Theorem~\ref{maintheorem}: holomorphic vvaf}
We begin with the case $k+2\alpha\ge0$. Let $\mathcal S(\cu,v_0,K)=\{u+iv\in\h:v<v_0\text{ and }|u-\cu|\leq Kv\}$. There exists constants $v_0,v_1,K$ and a fundamental domain $\mr{F}_\rG$ that is contained in a finite union of sets of type $\mathcal S(\cu,v_0,K)$, where $\cu$ is a cusp of $\rG$ that is not equivalent to $\infty$, and of a set $\{u+iv\in\h:0\leq u<h\text{ and }v>v_1\}$. Take $\tau$ in $\h$ such that $\mr{Im}\tau=y<v_1$. There exists $z=u+iv$ in $\mr{F}_\rG$ such that $\gamma z=\tau$. From inequality~\eqref{bound_X},
$$y^{k+2\alpha}\|\X(\tau)\|\ll|cz+d|^{-k-2\alpha}(1+4|z|^2)^\alpha v^k\|\X(z)\|$$

From the fact that $\X|_kA_\cu$ is bounded near $\infty$, one can show that $v^k\|\X(z)\|$ is bounded in any set of type $\mathcal S(\cu,v_0,K)$. Therefore, in such a set,
\begin{align*}
y^{k+2\alpha}\|\X(\tau)\|\ll |cz+d|^{-k-2\alpha}
\end{align*}
By Corollary~\ref{lowerboundjgammaz}, $|cz+d|$ has a lower bound independent of $\gamma$ and $z$, for any $z$ in $\mathcal{S}(\cu, v_0, K)$. This implies that $y^{k+2\alpha}\X(\tau)$ is bounded, since we are assuming that the exponent $-k-2\alpha$ is negative.

It remains to consider the case in which $0\leq u<h$ and $v>v_1$. Now $\frac{1+4|z|^2}{v^2}$ is bounded, so that
$$y^{k+2\alpha}\|\X(\tau)\|\ll\left(\frac{v}{|cz+d|}\right)^{k+2\alpha}\leq |c|^{-k-2\alpha}.$$
By Lemma 1.7.3 of~\cite{Miyake} and the hypothesis that the exponent $-k-2\alpha$ is negative, this has an upper bound unless $\gamma = \tmt{1}{h}{0}{1}^n$. But $\gamma = \tmt{1}{h}{0}{1}^n\implies y=v>v_1$, contradicting our choice of $\tau$. Therefore, $y^{k+2\alpha}\|\X(\tau)\|$ is bounded if $y$ is sufficiently small.  As in the proof for cusp forms, one obtains the bound for the Fourier coefficients of $\X$.

\subsubsection{Proof of part (c) of Theorem~\ref{maintheorem}: case $k+2\alpha<0$}
To treat the case $k+2\alpha<0$, we work with a fundamental domain that is contained in a finite union of sets of the type $\mathcal S(\cu,v_0,K)$. In such a region, we employ inequality~\eqref{bound_product} together with the bound $\X(z)\ll v^{-k}$, and get
$$y^{k/2+\alpha}\|\X(\tau)\|\ll v^{-k/2-\alpha}\ll 1,$$
since $v$ is bounded. This implies that $\|\X(\tau)\|\to0$ as $y\to 0$. Therefore, each component of $\X(\tau)$ approaches to $0$ as $y\to 0$. Multiplying the $i^{\text{th}}$-component of $\X$ by $\tq^{1-\mu(\lambda_{i})}$, we get a power series in $\tq$ which approaches to $0$ as $|\tq|\to1$. By the maximum principle applied to circles with radius close to $1$, we conclude that each component of $\X(\tau)$ vanishes.\qed


\section{Growth for logarithmic vector-valued automorphic forms}\label{se:logvvaf}
In the previous section we studied growth of admissible case. Recall that our definition included moderate growth for this case. We mentioned in Remark~\ref{rem:unitary} that such a growth implies that the representation  has only unitary eigenvalues. We initiated the discussion about the logarithmic vvaf in Section~\ref{sec:vvaf}. Now, in this general setting, we are not imposing the moderate growth condition. In this case, we are assuming that all the eigenvalues of the image of each parabolic element are unitary.

\subsection{Polynomial-growth of the representation } One of the big advantages of assuming $\rho$ admissible was that, $\norm{\rho(t_{\cu}^n)}=O_{\cu}(1),$ for any cusp $\cu$ of $\rG$ and $n\in \mathbb{Z}$. However, the same may not hold when $\rho$ is not admissible. We have the following lemma to overcome that obstacle.
\begin{lem}\label{lem:parabolic} For any integer $n\neq 0$, and any parabolic element $t_{\cu}\in \rG,$ we have the following estimate
\[\norm{\rho(t_{\cu}^n)}\ll_{\cu,m} |n|^{m-1}.\]
\end{lem}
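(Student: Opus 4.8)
The plan is to reduce the bound on $\norm{\rho(t_{\cu}^n)}$ to an elementary estimate on powers of a matrix all of whose eigenvalues are unitary. By hypothesis, $t_{\cu}$ is a parabolic element of $\rG$, so by our standing assumption every eigenvalue of $\rho(t_{\cu})$ has modulus one. First I would put $\rho(t_{\cu})$ into Jordan canonical form, writing $\rho(t_{\cu}) = P J P^{-1}$ where $J$ is block diagonal with Jordan blocks $J_{s,\lambda}$, each of size $s \le m$ and with $|\lambda| = 1$. Since $\norm{\rho(t_{\cu}^n)} = \norm{P J^n P^{-1}} \le \norm{P}\,\norm{P^{-1}}\,\norm{J^n}$ and $\norm{P}, \norm{P^{-1}}$ depend only on $\cu$ (through the fixed diagonalizing matrix for that cusp), it suffices to bound $\norm{J^n}$, and this reduces to bounding $\norm{J_{s,\lambda}^n}$ for a single Jordan block.

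The key computation is the explicit form of the $n$-th power of a Jordan block. For a block $J_{s,\lambda} = \lambda I + N$ with $N$ the nilpotent shift satisfying $N^s = 0$, the binomial theorem gives
\begin{equation}\label{eq:jordanpower}
J_{s,\lambda}^n = \sum_{r=0}^{s-1}\binom{n}{r}\lambda^{n-r}N^r,
\end{equation}
since $N$ and $\lambda I$ commute and all terms with $r \ge s$ vanish. The $(i,j)$ entry of $J_{s,\lambda}^n$ is therefore either $0$ or of the form $\binom{n}{j-i}\lambda^{n-(j-i)}$. Because $|\lambda| = 1$, each such entry has modulus $\left|\binom{n}{j-i}\right|$, which for fixed block size $s \le m$ is bounded by $\binom{|n|}{r}$ with $0 \le r \le s-1 \le m-1$. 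The dominant term is the binomial coefficient of largest index, and $\binom{|n|}{r} \ll_m |n|^r \le |n|^{m-1}$ for $|n|$ large (and trivially for all $n \neq 0$ after adjusting the implied constant). Summing the finitely many entries preserves this bound, so $\norm{J_{s,\lambda}^n} \ll_m |n|^{m-1}$.

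Combining these two steps yields $\norm{\rho(t_{\cu}^n)} \ll_{\cu,m} |n|^{m-1}$, as claimed, where the dependence on $\cu$ enters only through the conjugating matrix $P$ for that particular cusp and the dependence on $m$ enters through the maximal Jordan block size. I do not expect a genuine obstacle here; the only point requiring a little care is that the estimate $\binom{|n|}{r} \ll_m |n|^r$ must be uniform in the block and must absorb the contribution of $\lambda^{n-r}$, which is harmless precisely because the eigenvalues are unitary—this is exactly where the hypothesis on the parabolic elements is used. If the eigenvalues were allowed modulus $\neq 1$, the factor $\lambda^{n}$ would produce exponential rather than polynomial growth, which is why the unitarity assumption is essential and cannot be dropped.
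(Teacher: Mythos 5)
Your proposal is correct and follows essentially the same route as the paper: reduce to a single Jordan block via the unitarity hypothesis, expand $(\lambda I+N)^n$ by the binomial theorem using $N^s=0$, and bound the resulting binomial coefficients by $|n|^{m-1}$. Your treatment is in fact slightly more careful than the paper's on two minor points (tracking the conjugating matrix $P$ explicitly, and keeping $\lambda^{n-r}$ inside the sum rather than factoring out $\lambda^n$), but the argument is the same.
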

\begin{proof} Due to the assumption, we may assume that $\rho(t_{\cu})$ is conjugate to a matrix in the Jordan canonical form. Now it is enough to bound norms of the corresponding Jordan blocks. Let $J_{m_t,\lambda_t}$ be one of such blocks. We can write
\[J_{m_t,\lambda_t}^{n}=\lambda_t^{n}(I_{m_t}+N)^{n}=\lambda_t^n\sum_{0\leq i\leq m_t}\binom{n}{i}N^i,\]
because $N^i=0$ for any $i\geq m_t.$ In particular, we then have
\[\norm{J^n_{m_t,\lambda_t}}\ll_{m_t} n^{m_t-1},\]
because $|\lambda|=\norm{N}=1$ and $\sum_{0\leq i\leq m_t-1} \binom{n}{i} \ll_{m_t} n^{m_t-1}.$ The result now follows by varying the Jordran blocks.
\end{proof}
We start with considering the decomposition of $\gamma$ given by Beardon, as discussed in Section~\ref{structure}. 

\begin{lem}\label{lem:power} The product of the powers of the parabolic elements coming in the word $\gamma=C_1C_2\cdots C_s$ is at most $\norm{\gamma}^{\alpha_1},$ for some constant $\alpha_1$ depending on $\rG.$

\end{lem}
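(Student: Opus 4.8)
The plan is to interpret Beardon's decomposition geometrically and to read the parabolic powers off the way the geodesic realizing $\gamma$ travels through the cusp neighbourhoods. Fix a base point $z_0$ in the interior of $\widehat{\mr{D}}_\rG$ (so away from every cusp) and let $\sigma$ be the hyperbolic geodesic from $z_0$ to $\gamma z_0$, with hyperbolic distance $d$. As in Section~\ref{structure}, the ordered sequence of tiles that $\sigma$ crosses gives the word $\gamma=C_1C_2\cdots C_s$, and by Lemma~\ref{lem:beardon} every long block has the form $C_i=t_{\cu_i}^{n_i}g_i$ with $g_i$ in the fixed finite set $\rG_0$. The sub-arc $\sigma_i$ of $\sigma$ corresponding to such a block runs through the fan of consecutive tiles sharing the parabolic vertex $v_i$ fixed by $t_{\cu_i}$; geometrically this is an excursion of $\sigma$ into a horoball based at $v_i$. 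Since the $\sigma_i$ are consecutive sub-arcs, they partition $\sigma$, so $\sum_i \mr{length}(\sigma_i)=\mr{length}(\sigma)=d(z_0,\gamma z_0)\le 2\log\norm{\gamma}+O(1)$, using $\cosh d(i,\gamma i)=\tfrac12\norm{\gamma}^2$ together with $d(z_0,i)=O(1)$.

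The heart of the argument is the lower bound $\mr{length}(\sigma_i)\ge 2\log|n_i|-O(1)$ for each parabolic block. To establish it I would conjugate $\cu_i$ to $\infty$, so that $t_{\cu_i}$ becomes $\tmt{1}{h_{\cu_i}}{0}{1}$ and the entry and exit sides of the fan crossed by $\sigma_i$ are related, up to the bounded element $g_i$, by the translation $w\mapsto w+n_ih_{\cu_i}$. As $\sigma_i$ is itself a geodesic segment, its length equals the distance between its endpoints, and a point $w$ on a fixed horosphere satisfies $d(w,t_{\cu_i}^{n_i}w)=\mr{arccosh}\!\left(1+\tfrac{(n_ih_{\cu_i})^2}{2(\mr{Im}\,w)^2}\right)=2\log|n_i|+O(1)$. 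Absorbing the bounded factor $g_i$ into the error by the triangle inequality (and using that the crossing points sit in a fixed compact region, since $z_0$ is away from the cusps) then yields the claimed bound.

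Combining the two estimates, and writing the parabolic blocks as $i=1,\dots,r$, I get $\sum_{i=1}^r\bigl(2\log|n_i|-O(1)\bigr)\le 2\log\norm{\gamma}+O(1)$. By the theorem of Eichler quoted in Section~\ref{structure}, the number of blocks, and in particular $r$, is $O(\log\norm{\gamma})$, so the accumulated additive errors $O(r)$ are themselves $O(\log\norm{\gamma})$. Rearranging gives $\sum_i\log|n_i|\le \alpha_1\log\norm{\gamma}$ for a constant $\alpha_1$ depending only on $\rG$, which is exactly the assertion $\prod_i|n_i|\le\norm{\gamma}^{\alpha_1}$ after exponentiating. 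This is precisely the form needed to feed into Lemma~\ref{lem:parabolic}, since $\prod_i\norm{\rho(t_{\cu_i}^{n_i})}\ll\prod_i|n_i|^{m-1}=\bigl(\prod_i|n_i|\bigr)^{m-1}$.

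The step I expect to be the main obstacle is the uniform excursion bound of the second paragraph: one must ensure that the implicit constants in ``the endpoints of $\sigma_i$ are related by $t_{\cu_i}^{n_i}$ up to a bounded element'' and in the depth-versus-length comparison do not depend on how deep the fan penetrates the cusp, nor on which of the finitely many cusps $\cu_i$ is involved. Fixing a $\rG$-invariant system of pairwise disjoint horoballs about the cusps, and exploiting the finiteness of $\rG_0$ and of the set of cusp widths, is exactly what is needed to make these constants uniform over all $\gamma\in\rG$; verifying this carefully is the technical core of the proof.
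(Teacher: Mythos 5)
Your proposal is essentially correct but takes a genuinely different, more geometric route than the paper. The paper's proof is two lines of algebra: for each long block $C_i=t_{\cu_i}^{n_i}g_i$ supplied by Lemma~\ref{lem:beardon} it observes that $|n_i|\ll\norm{C_i}$ (since $g_i$ ranges over a finite set and $\norm{t_{\cu_i}^{n_i}}\gg|n_i|$), notes that the finitely many possible short blocks contribute only bounded parabolic powers, and then invokes Beardon's Theorem~2 directly for the multiplicative bound $\prod_i\norm{C_i}\ll\norm{\gamma}^{O(1)}$. You instead re-derive that bound from scratch by turning it into an additive statement about the geodesic from $z_0$ to $\gamma z_0$: total length at most $2\log\norm{\gamma}+O(1)$, each parabolic fan costing at least $2\log|n_i|-O(1)$, with the $O(r)$ error absorbed by the block count $s=O(\log\norm{\gamma})$ (which the paper also takes from Beardon's Theorem~2, not from Eichler). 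This buys a self-contained and conceptually transparent argument --- each factor $|n_i|$ is paid for by a cusp excursion of length about $2\log|n_i|$ --- at the price of having to verify the uniform excursion estimate, which is essentially the quantitative content of Beardon's theorem that the paper simply cites. One local correction to the step you rightly flag as the technical core: the reason the entry and exit points of the fan sit at bounded height (after conjugating $\cu_i$ to $\infty$ and pulling back by $C_1\cdots C_{i-1}$) is not that $z_0$ avoids the cusps; the tile-crossing points of $\sigma$ can in general lie arbitrarily deep in a cusp. Rather, the sides through which $\sigma$ enters and leaves the fan cannot terminate at the parabolic vertex $v_i$ --- otherwise the adjacent tiles would themselves belong to the fan --- and the finitely many sides of $\widehat{\mr{D}}_\rG$ not ending at a given cusp have bounded imaginary part in the normalized picture. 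With that fix your argument closes.
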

\begin{proof} It follows from Lemma~\ref{lem:beardon} that there exists a constant $c$ such that whenever $|C_i|>c,$ we have $C_i$ is a product of a power of $t_{\cu}$, (where $\cu$ is a cusp of $\rG$) with an element coming from a finite subset $\rG^{*}$ of $\rG.$ In this case, $\norm{C_i} \gg $ the power of $t_{\cu}$ appearing in $C_i.$ On the other hand, the number of $C_i$ with $|C_i|\leq c$ is bounded by $|\rG^{*}|^{c}=O(1).$ In particular, all of the powers of parabolic elements appearing in such $C_i$’s are also $O(1)$. Therefore, the desired product of the powers of the parabolic elements coming in $\gamma$ is bounded by $O\left(\prod_{i,\mid C_i\mid >c} \norm{C_i}\right).$ The proof is now complete by~\cite[Theorem 2]{Beardon1975}.
\end{proof}
As a consequence we have the following growth result on $\rho.$ It is not hard to see that, we do not have such a nice growth if image of some parabolic element have non unitary eigenvalues.  
\begin{lem}\label{lem:polygrowth} Let $\gamma:=\tmt{a}{b}{c}{d} \in \rG$ be an arbitrary element. Then we have that
$\norm{\rho(\gamma)}\ll (a^2+b^2+c^2+d^2)^{\alpha'},$
for some constant $\alpha'$ depending on $\rG.$
 \end{lem}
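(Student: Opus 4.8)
The plan is to feed $\gamma$ through Beardon's word decomposition and bound its image under $\rho$ factor by factor, exploiting the submultiplicativity of the Frobenius norm. By Lemma~\ref{lem:beardon} I would write $\gamma = C_1 C_2 \cdots C_s$, where each long factor (one with $|C_i| > c$) has the shape $C_i = t_{\cu_i}^{n_i} g_i$ with $g_i$ in the finite set $\rG_0$, while each short factor (one with $|C_i| \le c$) ranges over a finite set. Since $\norm{\cdot}$ is the Euclidean norm on matrices it is submultiplicative, $\norm{AB}\le\norm{A}\norm{B}$, so applying $\rho$ yields
\[\norm{\rho(\gamma)} \le \prod_{i=1}^{s}\norm{\rho(C_i)}.\]
It then suffices to bound each factor and multiply.

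The key idea is to turn each factor bound into a \emph{pure power} of $\norm{C_i}$, carrying no leading constant, so that the submultiplicative product cannot accumulate constants. For a long factor, Lemma~\ref{lem:parabolic} together with the hypothesis that every parabolic eigenvalue is unitary gives $\norm{\rho(t_{\cu_i}^{n_i})}\ll|n_i|^{m-1}$, and since $\norm{\rho(g_i)}=O(1)$ uniformly over the finite set $\rG_0$, we get $\norm{\rho(C_i)}\ll|n_i|^{m-1}$; as $\norm{C_i}\gg|n_i|$ (the parabolic power is dominated by $\norm{C_i}$, as observed in the proof of Lemma~\ref{lem:power}), this is $\ll\norm{C_i}^{m-1}$. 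For a short factor, $\norm{\rho(C_i)}=O(1)$ since $C_i$ takes finitely many values. Now every element of $\rG$ satisfies $\norm{C_i}\ge\sqrt2>1$ (indeed $a^2+b^2+c^2+d^2\ge 2|ad-bc|=2$), so any fixed constant is itself bounded by a fixed power of $\norm{C_i}$. Choosing one exponent $M'$, depending only on $\rG$, $\rho$ and $m$, large enough to absorb every implied constant, I would reach the uniform bound $\norm{\rho(C_i)}\le\norm{C_i}^{M'}$, valid for all factors.

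With this uniform bound the assembly is immediate:
\[\norm{\rho(\gamma)} \le \prod_{i=1}^{s}\norm{C_i}^{M'} = \Big(\prod_{i=1}^{s}\norm{C_i}\Big)^{M'}.\]
By \cite[Theorem 2]{Beardon1975}, invoked exactly as in the proof of Lemma~\ref{lem:power} (where the short factors contribute only $O(1)$ to the product), we have $\prod_i\norm{C_i}\ll\norm{\gamma}^{\beta}$ for a fixed $\beta$. Hence $\norm{\rho(\gamma)}\ll\norm{\gamma}^{M'\beta}$, and since $\norm{\gamma}^2=a^2+b^2+c^2+d^2$ this is the asserted bound with $\alpha'=M'\beta/2$.

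The step I expect to be most delicate is precisely the conversion of each factor bound into a pure power with no leading constant. Because submultiplicativity forces a product over a number of factors that itself grows with $\norm{\gamma}$, any surviving multiplicative constant $>1$ attached to individual factors would compound into a super-polynomial term; the lower bound $\norm{C_i}\ge\sqrt2$ is what lets me fold the constants harmlessly into the exponent. This is also the point where the unitarity of the parabolic eigenvalues is indispensable: it keeps $\norm{\rho(t_{\cu}^{n})}$ merely polynomial in $n$ through Lemma~\ref{lem:parabolic}. Were some parabolic eigenvalue non-unitary, $\norm{\rho(t_{\cu}^{n})}$ would grow exponentially in $n$ and no polynomial bound in $\norm{\gamma}$ could survive.
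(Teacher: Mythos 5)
Your proof is correct and follows essentially the same route as the paper: Beardon's decomposition $\gamma=C_1\cdots C_s$, submultiplicativity of the norm, Lemma~\ref{lem:parabolic} for the parabolic powers, and Lemma~\ref{lem:power} together with Beardon's Theorem 2 to control the resulting product. The only difference is bookkeeping — you absorb the per-factor constants into the exponent via $\norm{C_i}\ge\sqrt2$, whereas the paper keeps the factor $M_\rG^{s}$ explicitly and uses $s=O(\log\norm{\gamma})$ to see that it is polynomial; both are valid.
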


\begin{proof} 
We first consider the decomposition $\gamma=\prod_{i=1}^{s}C_i$ as given by Beardon, and we obtain $\norm{\rho(\gamma)}\leq \prod_{i=1}^{s}\norm{\rho(C_i)}.$ Once again, since $\rG^*$ is finite, the terms with $| C_i|< c$ do not contribute much. On the other hand, following Lemma~\ref{lem:beardon}, $|C_i|>c$ implies that there exists a cusp $\cu_i$ of $\rG$ such that $C_i$ is a product of $t_{\cu_i}^{n_i}$ with an element from $\rG^{*}.$ In particular, it follows from Lemma~\ref{lem:parabolic} that 
$$\norm{\rho(C_i)}\leq M_{\rG}\norm{\rho(t_{\cu_i}^{n_i})}\ll_{\cu_i,m}M_{\rG}|n_i|^{m-1},$$
where $M_{\rG}$ is the maximum of $\norm{\rho(\gamma)}_{\gamma \in \rG^{*}}.$ Now each such $\cu_i$ is in fact a vertex of $\widehat{\mr{D}}_\rG,$ and also the rank $m$ is fixed, hence we can actually write $\norm{\rho(C_i)}\ll M_{\rG}|n_i|^{m-1}.$ We then have the following estimate 
\[\norm{\rho(\gamma)}\leq \prod_{i=1}^{s}\norm{\rho(C_i)} \ll \prod_{i,| C_i| >c}\norm{\rho(C_i)} \ll M_{\rG}^{s}\prod_{\substack{i,\mid C_i\mid >c\\ t_{\cu_i}^{n_i}\in C_i}}| n_i|^{m-1}.\]
Now we get the desired bound by applying Lemma~\ref{lem:power} and the bound $s=O(\log(a^2+b^2+c^2+d^2))$ from~\cite[Theorem 2]{Beardon1975}.

\end{proof}

\subsection{Recipe to bridge two certain regions in $\mathbb{H}$}
We need another ingredient to finish our preparation for the logarithmic case of Theorem~\ref{maintheorem}. As in the proof of admissible case, we shall need a relation between $|cz+d|^2$ and $c^2+d^2$, where $(c,d)$ is the last row of a matrix in $\rG$. However, Lemma~\ref{lem:polygrowth} gives a bound in terms of $a^2+b^2+c^2+d^2$. For this reason it is useful to have an inequality of the form $a^2+b^2 \ll c^2+d^2$. We shall shortly see that there exists a region where vvaf has the desired growth, and the following result gives us an element of $\rG,$ which serves as a bridge to the region $1\leq x \leq h, 0<y<1.$

\begin{lem} Let $\gamma_0 = \tmt{a_0}{b_0}{c_0}{d_0}$ with $c_0\neq0$. Let $\gamma\in \mr{PSL}_2(\R)$ have rows $r_1$ and $r_2$. Let $\tilde\gamma = \gamma_0\gamma$ have rows $\tilde r_1$ and $\tilde r_2$. Then either $\norm{r_1}\le\max\left\{1, 2\left|\frac{d_0}{c_0}\right|\right\}\norm{r_2}$ or $\|\tilde r_1\|\le2\frac{|a_0|+|b_0|}{|c_0|}\|\tilde r_2\|,$ where  $\norm{\cdot}$ of the rows are defined as the usual norm in $\mathbb{R}^2.$ 
\end{lem}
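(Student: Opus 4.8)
The plan is to make everything completely explicit: compute the rows of $\tilde\gamma = \gamma_0\gamma$ in terms of $r_1,r_2$, and then argue by a quantitative dichotomy, showing that if the first alternative fails then its failure is strong enough to force the second. Writing $r_1,r_2$ for the top and bottom rows of $\gamma$, a direct matrix multiplication gives
\[
\tilde r_1 = a_0 r_1 + b_0 r_2, \qquad \tilde r_2 = c_0 r_1 + d_0 r_2.
\]
The strategy is then to suppose that the first alternative fails, i.e.\ that $\norm{r_1} > \max\left\{1, 2\left|\frac{d_0}{c_0}\right|\right\}\norm{r_2}$, and to deduce the second. This single hypothesis unpacks into the two inequalities $\norm{r_2} < \norm{r_1}$ (from the factor $1$ in the maximum) and $|d_0|\norm{r_2} < \frac{|c_0|}{2}\norm{r_1}$ (from the factor $2|d_0/c_0|$, using $c_0\neq 0$). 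These are exactly what is needed to bound $\tilde r_1$ from above and $\tilde r_2$ from below.

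I would first bound $\norm{\tilde r_1}$ by the triangle inequality,
\[
\norm{\tilde r_1} \le |a_0|\norm{r_1} + |b_0|\norm{r_2} \le \left(|a_0| + |b_0|\right)\norm{r_1},
\]
where the second step uses $\norm{r_2} < \norm{r_1}$. Next I would bound $\norm{\tilde r_2}$ from below by the reverse triangle inequality,
\[
\norm{\tilde r_2} \ge |c_0|\norm{r_1} - |d_0|\norm{r_2} > |c_0|\norm{r_1} - \frac{|c_0|}{2}\norm{r_1} = \frac{|c_0|}{2}\norm{r_1},
\]
so that $\norm{r_1} < \frac{2}{|c_0|}\norm{\tilde r_2}$. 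Chaining the two estimates yields
\[
\norm{\tilde r_1} \le \left(|a_0| + |b_0|\right)\norm{r_1} < 2\,\frac{|a_0| + |b_0|}{|c_0|}\norm{\tilde r_2},
\]
which is precisely the second alternative, completing the argument.

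There is no genuine analytic difficulty here; the entire content lies in choosing the right quantitative form of the dichotomy. The point I expect to require the most care is the calibration of the threshold: the factor $2$ in $2|d_0/c_0|$ is tuned exactly so that the term $|d_0|\norm{r_2}$ can consume at most half of the leading term $|c_0|\norm{r_1}$ in the lower bound for $\norm{\tilde r_2}$, leaving the clean quantity $\frac{|c_0|}{2}\norm{r_1}$. The only case meriting a separate glance is $d_0 = 0$, but then $\max\left\{1, 2|d_0/c_0|\right\} = 1$ and the inequality $|d_0|\norm{r_2} < \frac{|c_0|}{2}\norm{r_1}$ holds trivially, so the same chain of estimates goes through without modification.
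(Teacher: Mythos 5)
Your proof is correct and follows essentially the same route as the paper's: negate the first alternative, bound $\|\tilde r_1\|$ above by the triangle inequality and $\|\tilde r_2\|$ below by the reverse triangle inequality, with the factor $2$ calibrated so that $|d_0|\|r_2\|$ absorbs at most half of $|c_0|\|r_1\|$. The only cosmetic difference is that the paper works with non-strict inequalities, which is immaterial since the two alternatives are allowed to overlap.
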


\begin{proof} Assume $\|r_1\|\ge\max\{1, 2\left|\frac{d_0}{c_0}\right|\}\|r_2\|$. From the definition of matrix multiplication, $\tilde r_1 = a_0r_1+b_0r_2$ and $\tilde r_2 = c_0r_1+d_0r_2$. From the triangle inequality, 
\begin{equation}\label{r_tilde_bound}
    \|\tilde r_1\|\le|a_0|\cdot\|r_1\|+|b_0|\cdot\|r_2\|\le(|a_0|+|b_0|)\|r_1\|.
\end{equation}
Applying the triangle inequality, the hypothesis $\|r_1\|\ge2\left|\frac{d_0}{c_0}\right|\|r_2\|$ and~\eqref{r_tilde_bound}, gives us
$$ \|\tilde r_2\| \ge|c_0|\cdot\|r_1\|-|d_0|\cdot\|r_2\| \ge\frac{|c_0|}{2}\|r_1\| \ge\frac{1}{2}\cdot\frac{|c_0|}{|a_0|+|b_0|}\|\tilde r_1\|,$$
which is equivalent to the stated inequality.
\end{proof}

\subsection{Proof of Theorem~\ref{maintheorem}: Logarithmic case}
Let us first consider the case when $\X(\tau)$ is a cusp form. We claim that there exists $\gamma_0 = \tmt{a_0}{b_0}{c_0}{d_0}\in \rG$ such that $c_0>0$ and $d_0>0$. The existence of a matrix $\g_1=\tmt{a_1}{b_1}{c_1}{d_1}$ such that $c_1\neq0$ follows from~\cite[Theorem 1.5.4]{Miyake} and the assumption that $\rG$ is of the first kind. Since we are in $\mr{PSL}_2(\R)$ rather than in $\mr{SL}_2(\R)$, we may assume $c_1>0$. Let $n$ be so large that $d_1+nhc_1>0$. Then we can take $\gamma_0=\gamma_1t_\infty^n=\tmt{a_1}{b_1+nha_1}{c_1}{d_1+nhc_1}$. From now on we fix such a $\gamma_0$.

We consider $\mr{F}_{\rG}$ to be a fundamental domain of $\rG$ that is union of the sets of type $\mathcal{S}(\cu,v_0,K).$ We now consider 
\[\rG_1=\left\{\tmt{a}{b}{c}{d}\mid \|r_1\|\le\max\left\{1, 2\left|\frac{d_0}{c_0}\right|\right\}\|r_2\|\right\},\]
and denote $\mathbb{H}_1=\rG_1\mathbb{F}_{\rG}.$ The previous lemma, and~(\ref{another_bound}) implies that for any $\tau\in \mathbb{H}_1,$
\begin{equation}\label{eqn:logcusp}
y^{k/2+\alpha'}\norm{\X(\tau)}\ll v^{k/2-\alpha'}\norm{\X(z)},
\end{equation}
where $\tau=\gamma z$ for some $\gamma \in \rG_1$ and $z$ is in the union of $\mathcal{S}(\cu, v_0,K)$'s. Since $\X$ is a cusp form, all the $\widetilde{q}_{\cu}$-expansions associated to $\X|_{k}A_{\cu}$ decay exponentially as the imaginary part of $A_\cu^{-1}z=\frac{1}{\cu-z} \to \infty.$ On the other hand, $|\log \widetilde{q}_{\cu}|$ grows polynomially, as $\text{Im}\left(\frac{1}{\cu-z}\right) \to \infty$ and $\text{Re}\left(\frac{1}{\cu-z}\right)$ can be taken to be bounded. The maximum power of $\log \widetilde{q}_{\cu}$ appearing in the logarithmic expansion is at most $\dim(\rho),$ and it is clear that an extra polynomial factor does not affect the exponential decay of the logarithmic expansion. In particular, $\X|_kA_{\cu}$ decays exponentially as $y\to \infty.$ Now arguing similarly as in Theorem~\ref{maintheorem}, that is, by comparing $\X$ with $\X|_kA_{\cu},$ we get $y^{\beta}\norm{\X(z)}=O(1)$ for any $z\in S(\cu, y_0, K)$ and real $\beta.$ We now get $\norm{\X(\tau)}\ll y^{-k/2-\alpha'}$ for any $\tau \in \mathbb{H}_1$ by taking $\beta=\alpha'+\frac{k}{2}$ in~(\ref{eqn:logcusp}).

Now we compare $\X(\tau)$ with $\X(\gamma_0\tau)$ to see the growth in $\mathbb{H}\setminus \mathbb{H}_1.$ Let $\tau=x+iy \in \mathbb{H}\setminus \mathbb{H}_1,$ then using the functional equation,
\begin{align*}
\|\X(\tau)\|&=|c_0\tau+d_0|^{-k}\|\rho(\gamma_0)^{-1}\X(\gamma_0\tau)\|\\
&\ll|c_0\tau+d_0|^{-k}(\mr{Im}\gamma_0\tau)^{-k/2-\alpha'}\\
&\ll|c_0\tau+d_0|^{-k}\left(\frac{y}{|c_0\tau+d_0|^2}\right)^{-k/2-\alpha'}\\
&\ll|c_0\tau+d_0|^{2\alpha'}y^{-k/2-\alpha'}\\
&\ll y^{-k/2-\alpha'},
\end{align*}
for any $0\leq x \leq h$ and $0<y<1.$ In particular, $|\X_i(\tau)| \ll y^{-k/2-\alpha'} $ for all $0\leq i\leq m-1.$ Then it follows inductively from~(\ref{eqn:matrix}) that $|\tilde{h}_{\lambda,j}(\tau)|\ll y^{-k/2-\alpha'},$ and in particular
\begin{equation}\label{eqn:lvaf bound}
    |h_{i,j,\lambda}(\tau)|, \norm{h_{\lambda,j}(\tau)}\ll y^{-k/2-\alpha'},
\end{equation}
for all $1\leq j\leq m(\lambda)-1,$ any eigenvalue $\lambda$ of $\rho(t_{\infty}),$ and $0\leq x\leq h.$ Now note that
\[\norm{\X_{[\lambda,j,n]}}\ll \frac 1 h\int_{0}^{h}\norm{h_{\lambda,j}(x+iy){\tq}^{(-n-\mu(\lambda))}} dx,~\forall ~0\leq j\leq m(\lambda)-1.\]
In particular, we then have
\[\norm{\X_{[\lambda,j,n]}} \ll y^{-k/2-\alpha'} e^{2\pi y(n+\mu(\lambda))/h}, ~\forall ~0\leq j\leq m(\lambda)-1.\]
Now, taking $y=\frac{1}{n+\mu(\lambda)}$ and setting $\alpha$ to be $\alpha'$ we get the desired result.

Let us now consider the holomorphic case. Similarly as in the previous case, we shall first show that $\X$ has polynomial-growth in $\mathbb{H}_1.$ Consider a fundamental domain that is covered by finitely many sets of type $\mathcal S(\cu,y_0,K),$ where $\cu$ is a finite cusp of $\rG$ not equivalent to $\infty,$ and a region of type $\mathcal S=\{z \in \mathbb{H}\mid 0\leq u \leq h, v> v_0\}.$ From~\eqref{bound_X} we have for any $\tau \in \mathbb{H}_1$ that
\begin{equation}\label{eq:log ineq}
y^{k+2\alpha'+m}\|\X(\tau)\|\ll|cz+d|^{-k-2\alpha'-2m}(1+4|z|^2)^{\alpha'} v^{k+m}\|\X(z)\|
\end{equation}
where $\tau=\gamma z,$ for some $\gamma \in \rG$ and $z=u+iv$ lying in one of the sets $\mathcal S(\cu,y_0,K).$ Following the arguments given in the previous case, we have $v^{k+m}\|\X(z)\|=O(1).$ This is because, all $\widetilde{q}_{\cu}$-expansions of $\X|_kA_{\cu}$ are bounded near $\infty$ and the extra $\log$ factor grows like $\text{Im}\left(\frac{1}{\cu-z}\right)\sim v^{-1}.$ It now follows immediately from Lemma~\ref{lowerboundjgammaz} that $y^{k+2\alpha}\norm{\X(\tau)}$ is bounded whenever $z$ is in one of the sets $\mathcal S(\cu,v_0,K),$ where $\alpha=\alpha' + m$. On the other hand if $z$ in $\mathcal S$, we have to be a little more careful because of the extra unbounded $\log$ factors coming in the Fourier expansion. From~\eqref{eq:log ineq} it follows that
\begin{align*}
y^{k+2\alpha'+m}\norm{\X(\tau)} &\ll v^{k+m}|c z+d|^{-k-2\alpha'-2m}(1+4|z|^2)^{\alpha'}\norm{\X(z)} \\
& \ll v^{k+2m+2\alpha'}|c z+d|^{-k-2\alpha'-2m}v^{-m}\norm{\X(z)},
\end{align*}
as $\frac{1+4|z|^2}{v^2}$ is bounded. Now note that $v^{-m}\norm{\X(z)}=O(1)$ in $\mathcal{S}$ because the $h_{\lambda,j}$'s from~(\ref{fourier_expansion_X_i}) are bounded in $\mathcal{S}, |\log z|=O(y)$ and the maximum power of $\log$ appearing in the expansion of $\X(\tau)$ goes up to at most $m.$ Thus, if $k+2\alpha'+2m\ge0$ and $y<v_0$, we have $y^{k+2\alpha'+m}\|\X(\tau)\|\ll1$ as in the admissible case.

We now need to relate $\mathbb{H}_1$ with $\mathbb{H}$ and for that we are again going to rely on the comparison of $\X(\tau)$ with $\X(\gamma_0\tau).$ Note that we have estimated $\X(\tau)$ only at the points of $\h_1$ with small imaginary part. So we need to ensure that $\mr{Im}\g_0\tau=\frac{y}{|c_0\tau+d_0|^2}$ is small. This happens when $0\le x\le h$ and $y$ is small, because then $|c_0\tau+d_0|\ge\mr{Re}(c_0\tau+d_0)\ge d_0$ by our choice of $\g_0$. Arguing similarly as in the previous case, we have that for any $\tau=x+iy\in \mathbb{H}\setminus \mathbb{H}_1$,
\[\norm{\X(\tau)}\ll y^{-k-2\alpha'-m},\]
provided $\tau$ is bounded. Performing the integration, we get the desired result by the same choice of $\alpha.$\\

For part $(c)$, we may assume that $k+2\alpha'+m<0$. If we have also $k+2\alpha'+2m\ge0$, the previous argument gives $\|\X(\tau)\|\ll y^{-k-2\alpha'-m}$, so $\X(\tau)\to0$ as $y\to 0$ with $0\le x\le h$. If $k+2\alpha'+2m<0$, let $\tilde\alpha$ solve the equation $k+2\tilde\alpha+2m=0$. Then we can apply the same argument with $\tilde\alpha$ instead of $\alpha'$. We get $\|\X(\tau)\|\ll y^{-k-2\tilde\alpha-m} = y^m$, which also approaches to $0$ as $y\to0$. Now we obtain $\X\equiv0$ as in the admissible case. \qed


\section{Growth of representations}\label{sec:rmks} In Theorem~\ref{maintheorem} we required a unitary condition on the eigenvalues in order to get a Fourier expansion. One of the consequence of this condition was that, the corresponding representation has polynomial-growth. Recall from Section~\ref{se:logvvaf} that, by polynomial-growth, we mean existence of a constant $\alpha$ such that $\norm{\rho(\gamma)}\ll \norm{\gamma}^{\alpha},$ for any $\gamma$ in the group $\rG.$ In this section, we want to see when a given representation has polynomial-growth, and what happens to the growth of vector-valued meromorphic functions which satisfies the functional equation $\X|_k\g = \rho(\g) \X, \forall \g \in \rG,$ with respect to the given representation $\rho$. 

\subsection{On polynomial-growth} We have the following criteria for polynomial-growth of $\rho,$ which basically says that it is enough to look over only the set of parabolic elements. \begin{prop}\label{prop:criteria} Let $\rG$ be a Fuchsian group of the first kind and $\rho:\rG \to \mr{GL}_m(\C)$ be a representation. Then $\rho$ has polynomial-growth if and only if all eigenvalues of image of each parabolic element are unitary. 
\end{prop}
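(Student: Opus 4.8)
The plan is to prove both directions of the equivalence separately, with the forward direction being essentially immediate and the reverse direction constituting the main content. For the forward direction, suppose $\rho$ has polynomial-growth, so $\norm{\rho(\g)}\ll\norm{\g}^{\alpha}$ for all $\g\in\rG$. Let $t_\cu$ be any parabolic element and suppose toward contradiction that $\rho(t_\cu)$ has an eigenvalue $\lambda$ with $|\lambda|\neq1$; by passing to $t_\cu^{-1}$ if necessary we may assume $|\lambda|>1$. Then $\norm{\rho(t_\cu^n)}\ge|\lambda|^n$ grows exponentially in $n$. On the other hand, since $t_\cu$ is parabolic, its entries — and hence $\norm{t_\cu^n}$ — grow only polynomially in $n$ (a parabolic element is conjugate to a unipotent $\tmt{1}{\ast}{0}{1}$, whose powers have linearly growing entries, and conjugation distorts norms by only a bounded factor). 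This contradicts polynomial-growth of $\rho$, establishing the forward implication.

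For the reverse direction, I would simply invoke the machinery already assembled in Section~\ref{se:logvvaf}. Assume every eigenvalue of the image of each parabolic element is unitary. This is exactly the hypothesis of Lemma~\ref{lem:polygrowth}, which was proved under precisely this assumption and yields
\[
\norm{\rho(\g)}\ll(a^2+b^2+c^2+d^2)^{\alpha'}=\norm{\g}^{2\alpha'}
\]
for $\g=\tmt{a}{b}{c}{d}\in\rG$ and some constant $\alpha'$ depending only on $\rG$. Taking $\alpha=2\alpha'$ gives polynomial-growth, so this direction reduces to citing the earlier lemma.

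The one point deserving care, and the main (modest) obstacle, is the forward direction: one must verify that unitarity of the eigenvalues is genuinely forced, i.e. that a single non-unitary eigenvalue of a \emph{single} parabolic image already destroys polynomial-growth. The subtlety is that $\norm{\rho(t_\cu^n)}$ measures the operator norm of the full conjugated power, not of an isolated Jordan block, so one should note that the Jordan block containing the offending eigenvalue $\lambda$ contributes a summand of size comparable to $|\lambda|^n$ to $\rho(t_\cu^n)$, and this summand cannot be cancelled since the blocks act on complementary invariant subspaces. Thus $\norm{\rho(t_\cu^n)}\gg|\lambda|^n$ exponentially, while $\norm{t_\cu^n}$ grows polynomially, contradicting $\norm{\rho(t_\cu^n)}\ll\norm{t_\cu^n}^{\alpha}$. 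With this observation the argument closes.
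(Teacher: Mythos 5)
Your proposal is correct and follows essentially the same route as the paper: the direction from unitary eigenvalues to polynomial-growth is obtained by citing Lemma~\ref{lem:polygrowth}, and the converse is obtained by comparing the polynomial growth of $\norm{t_\cu^n}$ (a parabolic being conjugate to a unipotent matrix) with the exponential growth of $\norm{\rho(t_\cu)^n}$ forced by a non-unitary eigenvalue. Your explicit reduction to $|\lambda|>1$ via $t_\cu^{-1}$ is a small point the paper glosses over, and your worry about cancellation between Jordan blocks is unnecessary since $\norm{\rho(t_\cu)^n}$ already dominates the spectral radius $|\lambda|^n$.
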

\begin{proof} Suppose that all eigenvalues of image of each parabolic element are unitary, then we get the polynomial-growth immediately from Lemma~\ref{lem:polygrowth}. Now for the other direction, take $\gamma\in\rG$ to be a parabolic element such that at least one eigenvalue of $\rho(\gamma)$ is non-unitary. Note that $\gamma$ is conjugate to an element of the form $\tmt{1}{mh}{0}{1}$ and in particular $\norm{\gamma^n}\ll n.$ It now follows from the computation with Jordan canonical form as in the proof of Lemma~\ref{lem:power} that $\norm{\rho(\gamma^n)}\geq r^n,$ for some $r>1.$ Here $r$ can be taken to be modulus of any non-unitary eigenvalue of $\rho(\gamma)$. This gives a contradiction.
\end{proof}

We now have an interesting consequence which says that the polynomial-growth is preserved under induction, restriction and isomorphism.
\begin{cor}\label{cor:more} Let $\rH\subseteq \rG$ be two Fuchsian groups of the first kind and $\rH$ has finite index in $\rG.$ Then $\rho$ has polynomial-growth if and only if the induced representation $\tilde \rho:=\mr{Ind}_{\rH}^{\rG}\rho$ has polynomial-growth.
\end{cor}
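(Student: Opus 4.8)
The plan is to reduce both sides of the equivalence to the eigenvalue criterion of Proposition~\ref{prop:criteria} and then to compare the parabolic elements of $\rG$ and $\rH$ through the explicit monomial form of the induced representation. First I would fix a complete set of coset representatives $g_1=e,g_2,\dots,g_n$ for $\rH$ in $\rG$, so that $\tilde\rho=\mr{Ind}_\rH^\rG\rho$ acts on $\bigoplus_i g_i\otimes V$, where $V$ is the space of $\rho$. For $\delta\in\rG$ one has $\delta g_i=g_{\sigma_\delta(i)}h_i$ for a permutation $\sigma_\delta$ of $\{1,\dots,n\}$ and elements $h_i\in\rH$, and $\tilde\rho(\delta)$ is the block-monomial matrix whose only nonzero blocks are $\rho(h_i)$ in position $(\sigma_\delta(i),i)$. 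By Proposition~\ref{prop:criteria}, it suffices to show that all eigenvalues of $\rho(\gamma)$ are unitary for every parabolic $\gamma\in\rH$ if and only if all eigenvalues of $\tilde\rho(\delta)$ are unitary for every parabolic $\delta\in\rG$. (That $\rH$ is again of the first kind follows from its finite index in $\rG$.)

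The converse direction is immediate. Since $g_1=e$ and $\gamma\in\rH$, we have $\gamma g_1=g_1\gamma$, so $\sigma_\gamma(1)=1$ and the corresponding diagonal block of $\tilde\rho(\gamma)$ is exactly $\rho(\gamma)$; equivalently, $\rho$ is a direct summand of $\mr{Res}_\rH\tilde\rho$. Hence the eigenvalues of $\rho(\gamma)$ form a sub-multiset of those of $\tilde\rho(\gamma)$. If $\tilde\rho$ has polynomial growth, Proposition~\ref{prop:criteria} makes these unitary, and therefore so are those of $\rho(\gamma)$; thus $\rho$ has polynomial growth.

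For the forward direction, assume $\rho$ has polynomial growth and let $\delta\in\rG$ be parabolic. Decompose $\sigma_\delta$ into disjoint cycles; the eigenvalues of $\tilde\rho(\delta)$ are the union, over cycles, of those attached to each cyclic block. For a cycle $(i_1\,i_2\,\cdots\,i_\ell)$ the block contributes precisely the $\ell$-th roots of the eigenvalues of the ordered product $P=\rho(h_{i_\ell}\cdots h_{i_1})$. Tracing the relations $\delta g_{i_j}=g_{i_{j+1}}h_{i_j}$ around the cycle gives $\delta^\ell g_{i_1}=g_{i_1}(h_{i_\ell}\cdots h_{i_1})$, so $h_{i_\ell}\cdots h_{i_1}=g_{i_1}^{-1}\delta^\ell g_{i_1}\in\rH$. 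Since $\delta$ is parabolic it has infinite order, so $\delta^\ell\ne e$ is again parabolic (its trace is still $\pm2$), and hence so is its $\rG$-conjugate $h_{i_\ell}\cdots h_{i_1}$, which lies in $\rH$. By hypothesis the eigenvalues of $P$ are unitary, so their $\ell$-th roots are unitary as well. As this holds for every cycle, all eigenvalues of $\tilde\rho(\delta)$ are unitary, and Proposition~\ref{prop:criteria} yields polynomial growth of $\tilde\rho$.

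The step requiring the most care, and the conceptual heart of the argument, is the identification of the eigenvalues of a cyclic block of the monomial matrix $\tilde\rho(\delta)$ with the $\ell$-th roots of the eigenvalues of the product $P$ taken around the cycle, together with the bookkeeping showing that this product equals $g_{i_1}^{-1}\delta^\ell g_{i_1}$ and therefore is a \emph{parabolic} element of $\rH$. Once this is established, the two properties that close the argument — parabolicity is preserved under $\rG$-conjugation, and unitarity is preserved under extracting $\ell$-th roots — are elementary, and the equivalence follows in both directions.
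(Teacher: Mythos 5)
Your proof is correct and follows essentially the same route as the paper: both reduce to the eigenvalue criterion of Proposition~\ref{prop:criteria}, exploit the block-monomial structure of $\tilde\rho$ to identify the relevant diagonal blocks with $\rho$ evaluated at $\rG$-conjugates of powers of the parabolic element (which lie in $\rH$ and are again parabolic), and finish by noting that unitarity is preserved under extracting roots. The only differences are cosmetic: you work cycle by cycle in $\sigma_\delta$ where the paper passes to a single power $N=\mathrm{lcm}\{n_i\}$ making $\tilde\rho(\delta^N)$ block diagonal, and for the converse you observe directly that $\rho$ is a summand of $\mathrm{Res}_\rH\tilde\rho$ rather than passing to an $N$-th power.
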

For the definition and the details on induced representation and their associated vector-valued automorphic forms see Sections 3 and 4 of~\cite{Bajpai2019}. Let us recall the definition of $\tilde\rho$. Write $\rG= \g_{1}\mr{H} \cup \g_{2}\mr{H} \cup \cdots \cup \g_{d}\mr{H},$ where $d$ is the index of $\rH$ in $\rG$. Without loss of generality we may assume that $\g_1=1$. The representation $\rho :\mr{H} \rightarrow \mr{GL}_{m}(\C)$ can be extended to a function on all of $\rG$, \ie $\rho: \rG \rightarrow \mr{M}_{m}(\C)$ by setting $\rho(x) =0,  \forall x \notin \mr{H}$ where  $\mr{M}_{m}(\C)$ is the set of all $m\times m $ matrices over $\C$. The induced representation $\tilde\rho: \rG \rightarrow \mr{GL}_{dm}(\C)$ is defined by \begin{equation}\label{eq:indrep} \tilde\rho(x) = \left( \begin{array}{cccc}
\rho(\g_{1}^{-1} x \g_{1}) & \rho(\g_{1}^{-1} x \g_{2}) & \ldots  &\rho(\g_{1}^{-1} x \g_{d})\\
\rho(\g_{2}^{-1} x \g_{1}) & \rho(\g_{2}^{-1} x \g_{2}) & \ldots & \rho(\g_{2}^{-1} x \g_{d}) \\
\vdots & \vdots & \ddots& \vdots \\
\rho(\g_{d}^{-1} x \g_{1}) & \rho(\g_{d}^{-1} x \g_{2}) & \ldots  &\rho(\g_{d}^{-1} x \g_{d})\\
\end{array} \right), \quad \forall x \in \rG. \end{equation} 
Now for any $x \in \rG$ and $\forall 1\leq i \leq m,$ there exists a unique $1\leq j \leq m$ such that $\rho(\g_{i}^{-1} x \g_{j}) \neq 0$. Therefore, exactly one nonzero $m \times m$ block appear in every row and every column of~\eqr{indrep}.
\begin{proof}[Proof of Corollary~\ref{cor:more}]  Due to Proposition~\ref{prop:criteria} we now know a representation has polynomial-growth if and only if, every eigenvalue of image of each parabolic element is unitary. We shall prove this result with respect to this unitary property.

Restriction invariant is an immediate consequence. Now if $\rho_1$ and $\rho_2$ are isomorphic representations, then $\rho_1(\gamma)$ is conjugate to $\rho_2(\gamma)$ for each element $\gamma \in \rG.$ In particular, all eigenvalues of $\rho_1(\gamma)$ are unitary if and only if, all eigenvalues of $\rho_2(\gamma)$ are unitary.

For the induction invariance, let $\tilde \rho$ be the induced representation of $\rho$ with respect to a choice of the coset representatives $\gamma_1,\ldots, \gamma_{d}$ of $\rH$ in $\rG$, where $d$ is the index of $\rH$ in $\rG.$ Let $\g\in \rG$ be a parabolic element. Then, for each $i$, some non-trivial power of $g_i=\gamma_i^{-1}\g\gamma_i$ is in $\rH$. This can be shown from the existence of $n_{i,1}$ and $n_{i,2}$ such that $g_i^{n_{i,1}}\rH=g_i^{n_{i,2}}\rH$, say the $n_i^{\text{th}}$-
power. Then $\gamma_i^{-1}\g^N\gamma_i\in \rG$ for each $i$, where $N=\text{lcm}\{n_i\}$. In particular, $\tilde \rho(\g^N)$ is a block diagonal matrix whose each blocks are of the form $\rho(\gamma_i^{-1}\g^N\gamma_i)$. If $\rho$ has polynomial-growth, then each such block has unitary eigenvalues, and in particular $\tilde \rho(\g^N)$ has unitary eigenvalues. Moreover, the eigenvalues of $\tilde\rho(\g)$ are $N^{\text{th}}$ roots of the eigenvalues of $\tilde\rho(\g^N)$, as one can see from the Jordan canonical form of $\tilde\rho(\g)$. Therefore, all the eigenvalues of $\tilde{\rho}(\g)$ are also unitary, and hence $\tilde \rho$ has polynomial-growth. 

On the other hand suppose that $\tilde \rho$ has polynomial-growth. Take an element $\gamma_0\in\rH,$ and it is enough to show that every eigenvalue of $\rho(\g_0)$ is unitary. It follows from the discussion in previous paragraph that, there exists $N$ such that $\tilde\rho(\g_0^N)$ is a block diagonal matrix. Moreover, one of the block is $\rho(\g_0^N)$ since one of the representative can be taken to be the identity element of $\rH.$ In particular, $\rho(\gamma_0^N)$ has only unitary eigenvalues, and so does $\rho(\gamma_0),$ as desired.
\end{proof}

\subsection{On a sharp polynomial-growth for finite index subgroups of $\text{PSL}_2(\Z)$} In Lemma~\ref{lem:polygrowth} we had a polynomial-growth on the representation  involving all the entries, while in Lemma~\ref{lem:alpha} the bound only involved the bottom row. The difference is that, in the admissible case, for any parabolic element $\gamma\in \rG,~\norm{\rho(\gamma^n)}$ is bounded irrespective to $n.$ To improve Lemma~\ref{lem:polygrowth} we need to control the number of times parabolic elements are appearing when we decompose an element of $\rG.$ For example, when we take $\rG$ to be a finite index subgroup of $\mathrm{PSL}_2(\Z),$ we have a better control. 
\begin{prop} Let $\rG$ be the finite index subgroup of $\mr{PSL}_2(\mathbb{\Z}),$ and $\rho$ be a representation of $\rG$ such that, image of each parabolic element has only unitary eigemvalues. Then we have, 
$$\norm{\rho(\gamma)}\ll (c^2+d^2)^{\alpha}\max\left\{[|a/c|]^{m-1},1\right\},~\forall \gamma \in \rG.$$
\end{prop}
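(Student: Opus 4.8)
The plan is to sharpen Lemma~\ref{lem:polygrowth} by isolating, inside any $\gamma=\tmt{a}{b}{c}{d}\in\rG$, a \emph{single} power of the parabolic generator $t_\infty$ fixing $\infty$, and to show that this power is controlled by $|a/c|$. This mirrors the admissible argument of Lemma~\ref{lem:alpha}, the only difference being that here $\norm{\rho(t_\infty^n)}$ is no longer bounded but grows like $|n|^{m-1}$ by Lemma~\ref{lem:parabolic}. Because $\rG$ is a finite index subgroup of $\mr{PSL}_2(\Z)$, the cusp width $h$ at $\infty$ is a positive integer and $t_\infty=\tmt{1}{h}{0}{1}$ has integer entries; moreover every $\gamma\in\rG$ has integer entries and determinant one. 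These integrality facts are exactly what is unavailable for a general Fuchsian group, and they are what drive the argument below.

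First I would dispose of the case $c=0$: then $\gamma=\tmt{1}{b}{0}{1}$ is a power of $t_\infty$ (as $h\mid b$), and Lemma~\ref{lem:parabolic} gives $\norm{\rho(\gamma)}\ll|b/h|^{m-1}$ directly, so this case is either excluded by the form of the right-hand side or handled at once. From now on assume $c\neq0$, so that $c^2\ge1$. I would choose $n$ to be the nearest integer to $a/(hc)$ and set $t_\infty^{-n}\gamma=\tmt{\tilde a}{\tilde b}{c}{d}$, so that $\tilde a=a-nhc$ and $\tilde b=b-nhd$. By construction $|\tilde a|\le h|c|/2$, whence $|n|\le|a/c|/h+\tfrac12\ll|a/c|+1$.

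Next I would control the reduced matrix using integrality. Since $t_\infty^{-n}\gamma\in\rG\subseteq\mr{PSL}_2(\Z)$ has determinant one, $\tilde a d-\tilde b c=1$, so $\tilde b=(\tilde a d-1)/c$ and hence $|\tilde b|\le h|d|/2+1/|c|\le h|d|/2+1$. Combining with $|\tilde a|\le h|c|/2$ gives $\tilde a^2+\tilde b^2+c^2+d^2\ll c^2+d^2$. Applying Lemma~\ref{lem:polygrowth} to $t_\infty^{-n}\gamma$ then yields $\norm{\rho(t_\infty^{-n}\gamma)}\ll(c^2+d^2)^{\alpha'}$, while Lemma~\ref{lem:parabolic} gives $\norm{\rho(t_\infty^{n})}\ll|n|^{m-1}$.

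Finally, submultiplicativity of the norm gives $\norm{\rho(\gamma)}\le\norm{\rho(t_\infty^{n})}\,\norm{\rho(t_\infty^{-n}\gamma)}\ll|n|^{m-1}(c^2+d^2)^{\alpha'}$. Since $|n|\ll|a/c|+1$, an elementary comparison (distinguishing $|a/c|<1$, where $[|a/c|]=0$, from $|a/c|\ge1$) shows $(|a/c|+1)^{m-1}\ll\max\{[|a/c|]^{m-1},1\}$, and taking $\alpha=\alpha'$ completes the proof. The main point — the only place requiring more than bookkeeping — is the estimate $|n|\ll|a/c|+1$ together with the fact that reducing $\tilde a$ modulo $hc$ does not blow up $\tilde b$; this is precisely where the determinant relation $\tilde a d-\tilde b c=1$ (i.e.\ finite index in $\mr{PSL}_2(\Z)$ rather than an arbitrary Fuchsian group) is essential, since it guarantees $t_\infty^{-n}\gamma$ still satisfies the hypothesis of Lemma~\ref{lem:polygrowth}.
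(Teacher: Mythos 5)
Your proof is correct, but it takes a genuinely different route from the paper's. The paper's proof induces $\rho$ to a representation $\tilde\rho$ of $\mr{PSL}_2(\Z)$ (using Corollary~\ref{cor:more} to transfer the unitary-eigenvalue hypothesis), invokes the word decomposition $\gamma=(st^{l_{v+1}})\cdots(st^{l_0})$ of \cite[Lemma 3.1]{KM2012} together with their Corollary 3.5 and their estimate on the leading exponent $l_v$ (which is what produces the $[|a/c|]^{m-1}$ factor), and then restricts back via $\norm{\rho(\gamma)}\le\norm{\tilde\rho(\gamma)}$. You instead stay entirely inside the paper's own toolkit: you peel off a single power $t_\infty^{n}$ with $n$ the nearest integer to $a/(hc)$, use integrality and the determinant relation to check that the remaining factor $\tmt{\tilde a}{\tilde b}{c}{d}$ has all four entries controlled by $c^2+d^2$ so that Lemma~\ref{lem:polygrowth} applies to it, and charge the entire $|a/c|$-dependence to $\norm{\rho(t_\infty^{n})}\ll|n|^{m-1}$ from Lemma~\ref{lem:parabolic}. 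In effect you prove the quantitative refinement of Lemma~\ref{lemma1} that the statement secretly requires --- namely that the exponent $n$ in that decomposition satisfies $|n|\ll|a/c|+1$ --- which the paper outsources to the Knopp--Mason continued-fraction machinery. Your version is self-contained, avoids the induced representation entirely, and makes the source of the exponent $m-1$ transparent; the paper's version is shorter on the page but imports external results. Both arguments yield the claimed bound with some constant $\alpha$ depending on $\rG$ and $\rho$, and both must (and do) set aside the degenerate case $c=0$, where the right-hand side is undefined and $\gamma$ is itself a power of $t_\infty$, handled directly by Lemma~\ref{lem:parabolic}.
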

\proof First consider the induced representation $\tilde{\rho}$ of $\rho$ to $\mr{PSL}_2(\mathbb{\Z}).$ It follows from the proof of Corollary~\ref{cor:more} that, image of each parabolic element under this induced representation have only unitary eigenvalues. Take an element $\gamma \in \rG\subseteq \mathrm{PSL}_2(\Z).$ We start with writing $\gamma=(st^{l_{v+1}})(st^{l_v})\cdots (st^{l_1})(st^{l_0})$ with $s=\tmt{0}{-1}{1}{0},t=\tmt{1}{1}{0}{1},$ as in~\cite[Lemma 3.1]{KM2012}. Using Corollary 3.5 and the estimate of $l_{v}$ from Lemma 3.1 of~\cite{KM2012}, we get $\norm{\tilde{\rho}(\gamma)} \ll (c^2+d^2)^{\alpha}\max\left\{[|a/c|]^{m-1},1\right\}.$ Since $\norm{\rho(\gamma)}\leq \norm{\tilde{\rho}(\gamma)}$, the result follows.\qed

Therefore we indeed have a better growth for finite index subgroups of $\mathrm{PSL}_2(\mathbb{Z})$ at least when $c\neq 0,$ in the sense that the bound does not involve one of the entries of $\gamma.$ When $c=0,$ the element $\gamma$ is parabolic. In that case, one can get a bound from Lemma~\ref{lem:parabolic}.

\subsubsection{Example of a representation with non polynomial growth}\label{ex:non-poly} There exists a $\rG$ and a representation $\rho$ of it such that $\rho$ does not have polynomial-growth. For instance, consider $\rG=\text{PSL}_2(\Z)$ and $\rho:\rG\to\text{GL}_3(\C)$ given by
\[\rho(s)=\begin{psmallmatrix}
    a & -(a+1) & 1 \\
    a-1  & -a & 1\\
    0 & 0 & 1 \\
  \end{psmallmatrix},~\rho(t)=\mr{diag}(\lambda_1,\lambda_2,\lambda_3),\]
  where $a,\lambda_1,\lambda_2,\lambda_3$ are yet to be chosen.
See section 2.1 of~\cite{CHMY} for a proof about why $\rho$ is a representation, provided that $\lambda_1\lambda_2=-\lambda_3^2, \frac{\lambda_1\lambda_2}{(\lambda_1-\lambda_2)^2}=-a^2$ and $\frac{1}{\lambda_1\lambda_2(\lambda_1-\lambda_2)}=a.$ We can make sure about these conditions hold by taking $\lambda_1,\lambda_2,\lambda_3$ as follows: Take $\lambda_3=1$ and $\lambda_1,\lambda_2$ in such a way, so that $\lambda_1\lambda_2=-1$ and $\lambda_1-\lambda_2=-\frac{1}{a}.$ We want to be able to make one of $\lambda_1$ or $\lambda_2$ non unitary, and we can now ensure that by taking any purely imaginary $a.$

\subsubsection{Example of a representation with polynomial growth} Consider $$ \X(\tau)= \frac{1}{\eta(\tau)} \begin{pmatrix} \theta_{2}(\tau)\\ \theta_{3}(\tau)\\ \theta_{4}(\tau)  \end{pmatrix},$$
where $\theta_2(\tau),\theta_3(\tau),\theta_4(\tau)$ and $\eta(\tau)$ are well known weight $1/2$ scalar-valued modular forms. For a complete description of these functions, the reader may refer to~\cite{Knopp}. It turns out that $\X(\tau)$ is a vector-valued modular function of $\mr{PSL}_2(\Z)$ and representation  $\rho$ where $\rho: \mr{PSL}_2(\Z) \rightarrow \mr{GL}_3(\C)$ is a rank 3 representation of $\G(1)$ given by
\begin{equation} \rho(s)= \begin{pmatrix}  0 &0&1 \\ 0&1&0 \\ 1&0&0 \end{pmatrix} \ \mr{and}\,\ \rho(t)=  \begin{pmatrix}  \exp(\frac{\pi \Ii}{6}) &0& 0 \\ 0& 0 &\exp(-\frac{\pi \Ii}{12}) \\ 0&\exp(-\frac{\pi \Ii}{12})&0 \end{pmatrix} \nonumber\,.\end{equation} 
To see whether $\rho$ has polynomial-growth, it is enough to check the eigenvalues of $\rho(t).$ In this case, they are given by $\exp(\frac{\pi \Ii}{6}), \exp(-\frac{\pi \Ii}{12})$ and $-\exp(-\frac{\pi \Ii}{12}).$ In particular, $\rho$ has polynomial-growth in this case.

\subsection{A consequence of polynomial-growth}
In Theorem~\ref{maintheorem} we achieved polynomial-growth of the Fourier coefficients, by showing a bound of the form $\norm{\X(\tau)}\ll y^{-k-2\alpha}$ when $\X(\tau)$ is a holomorphic vvaf. In this process, polynomial-growth of the associated representation $\rho$ played a crucial role. We obtained such a growth of $\rho$ assuming that, all the images of the parabolic elements have only unitary eigenvalues. However, even if we do not have this assumption, we could still consider a meromorphic function $\X:\mathbb{H}\to \C$ which satisfies the functional behavior. In other words, suppose that $\X|_k\gamma=\rho(\gamma)\X,~\forall \gamma\in \rG.$ In this more general situation, one may naturally ask whether we still have a polynomial-growth on $\X(\tau).$  To answer this question, we shall prove Theorem~\ref{thm:converse}
\begin{proof}[Proof of Theorem~\ref{thm:converse}]
Let us consider the space generated by the values given by $\X(\tau).$ More precisely, let us first consider $\mathbb{H}’$ to be $\mathbb{H} \setminus \{\text{poles~of}~\X(\tau)\}$ and $\mathcal{W}=\text{Span}_{\mathbb{C}}\left\{\X(\tau)\mid \tau \in \mathbb{H}’\right\}.$ Of course, here in $\mathcal{W},$ we are taking finite linear combination of $\X(\tau)$ over $\mathbb{C}.$ 

It is clear that $\mathcal{W}$ is a vector subspace of $\mathbb{C}^m,$ and hence having finite dimension. We now want to show that $\mathcal{W}$ is a representation of $\rG.$ For this, we use the functional equation $j(\gamma,\tau)^{-k}\X(\gamma \tau)=\rho(\gamma)\X(\tau),$ and note that $j(\gamma,\tau)\neq 0$ for any $\gamma \in \rG, \tau\in \mathbb{H}’.$ In particular, we have an action of $\rG$ on $\mathcal{W}$ given by $\rho.$ Therefore, $\mathcal{W}$ can be considered as a sub-representation of $\mathbb{C}^m.$ Let us first prove $(a)$. In this case $\rho$ is irreducible, and $\X(\tau)$ is a non-zero function, therefore $\mathcal{W}$ is isomorphic to $\mathbb{C}^m$. Let us now fix a basis $\left\{\X(\tau_1),\X(\tau_2),\cdots, \X(\tau_m)\right\}$ of $\mathcal{W}.$ We then have the following estimate for part $(a).$
\begin{align*}\norm{\rho(\gamma)}\ll \text{sup}\left\{\frac{\norm{\rho(\gamma)\X(\tau_i)}}{\norm{\X(\tau_i)}}\right\}_{1\leq i\leq m}&=\text{sup}\left\{\frac{|c\tau_i+d|^{-k}\norm{\X(\gamma \tau_i)}}{\norm{\X(\tau_i)}}\right\}_{1\leq i\leq m}\\
&\ll \text{sup}\left\{\frac{|c\tau_i+d|^{-k} |\text{im}(\gamma \tau_i)|^{-\zeta}}{\norm{\X(\tau_i)}}\right\}_{1\leq i\leq m}\\
&\ll \text{sup} \left\{\frac{|c\tau_i+d|^{2\zeta-k} |\text{im}(\tau_i)|^{-\zeta}}{\norm{\X(\tau_i)}}\right\}_{1\leq i\leq m}\\
&\ll |c^2+d^2|^{\zeta-k/2}\ll \norm{\gamma}^{2\zeta-k},
\end{align*}
where we are writing $\gamma=\tmt{a}{b}{c}{d}.$ On the other hand for part $(b),$ using the bound $\norm{X(x+iy)}\ll \max_{0\leq j\leq m-1}\{|x+iy|^{j}y^{-\zeta}\},$ we have the following estimate
\begin{align*}\norm{\rho(\gamma)}\ll \text{sup}\left\{\frac{\norm{\rho(\gamma)\X(\tau_i)}}{\norm{\X(\tau_i)}}\right\}_{1\leq i\leq m}&=\text{sup}\left\{\frac{|c\tau_i+d|^{-k}\norm{\X(\gamma \tau_i)}}{\norm{\X(\tau_i)}}\right\}_{1\leq i\leq m}\\
&\ll \text{sup}\left\{\frac{|c\tau_i+d|^{-k} |\gamma \tau_i|^{j}|\text{im}(\gamma \tau_i)|^{-\zeta}}{\norm{\X(\tau_i)}}\right\}_{\substack{0\leq j\leq m-1\\1\leq i\leq m}}\\
&\ll \text{sup} \left\{\frac{|c\tau_i+d|^{2\zeta-k} \norm{\gamma}^{j}}{\norm{\X(\tau_i)}}\right\}_{\substack{0\leq j\leq m-1\\1\leq i\leq m}}\\
&\ll \max\{\norm{\gamma}^{j+2\zeta-k}\}_{0\leq j\leq m-1},
\end{align*}

Now for part $(c)$ if $\rho$ is not necessarily irreducible, $\mathcal{W}$ may be a proper subspace of $\mathbb{C}^m.$ If this is whole $\mathbb{C}^m,$ then the previous argument gives the desired growth of $\rho.$ If $\mathcal{W}$ is a proper subspace of $\mathbb{C}^m$ of dimension $m',$ say. Then we can consider a basis of $\mathbb{C}^m,$ of the form $\left\{\X(\tau_1),\X(\tau_2),\cdots, \X(\tau_{m'}),v_{m'+1},\cdots v_m\right\}.$ Then the block of $\rho$ corresponding to $\left\{\X(\tau_1),\X(\tau_2),\cdots, \X(\tau_{m'})\right\}$ has a similar polynomial-growth, following the same argument as in the irreducible case.
\end{proof}

\subsection{Remarks}
\subsubsection{} Part $(a)$ of Theorem~\ref{thm:converse} could be considered as a converse statement to the admissible case of Theorem~\ref{maintheorem}, and part $(b)$ to the more general logarithmic case.

\subsubsection{} Theorem~\ref{thm:converse} is meaningful if there exists at least one case where the associated representation does not have polynomial-growth and the corresponding meromorphic function satisfies the functional property. We could then say, such a meromorphic function does not have polynomial-growth. In fact, given any representation it is possible to construct a meromorphic function which satisfies the functional-property. For instance, one could consider $\rG=\text{PSL}_2(\Z)$ and $\rho$ be the representation as in Example~\ref{ex:non-poly}. Fix a fundamental domain $\mr{F}_{\rG}$ of $\rG,$ and consider a meromorphic function $\X:\mr{F}_{\rG}\to \mathbb{C}^3$ which is $0$ on the boundary of $\widehat{\mr{F}_{\rG}},$ and then extend $\X$ to the whole upper half-plane by the functional property.

\section{$L$-functions and analytic continuations}\label{L-funct} 
Knopp and Mason remarked~\cite{Knopp4} that it is possible to attach an $L$-function to admissible vvmf for modular group with analytic continuation. In this section, we generalize this notion to both admissible and logarithmic vvaf for Fuchsian groups of the first kind. The proof of functional equation is classical, however we will briefly discuss the proof to remain self contained. In this connection, reader will find the article~\cite{KL2016} useful, where the authors discussed about the special values of $L$-function attached to vvmf for the modular group. In this section, we rather focus on the analytic continuation of these $L$-functions attached in general to  vector-valued automorphic forms for Fuchsian groups of the first kind.

\subsection{Admissible case} We first assume that $0,\infty$ are cusps of $\rG$ and $\X$ be an admissible cuspform. We start with defining
\[L(\X,\mathfrak{s})=\sum_{n\geq 0}\frac{\X_{[n]}}{(n+\Lambda)^{\mathfrak{s}}}\]
in $\text{Re}(\mathfrak{s})>k/2+\alpha+1,$ where $\alpha$ is the constant as in Theorem~\ref{maintheorem}, $\X_{[n]}$ is the $n^{\text{th}}$-Fourier coefficient of $\X$ at $\infty,$ and $(n+\Lambda)$ is the matrix 
$$\mathrm{diag}\left(n+\mu(\lambda_1),n+\mu(\lambda_2),\cdots,n+\mu(\lambda_m)\right),$$ 
where each $\lambda_i$ is an eigenvalue of $\rho(t_{\infty}).$ We also define the completed $L$-function $\widetilde{\Lambda}(\X,\mathfrak{s})$ to be $(2\pi)^{-\mathfrak{s}}\Gamma(\mathfrak{s})L(\X,\mathfrak{s}).$ Recalling that $\Gamma(\mathfrak{s})=\int_{0}^{\infty}e^{-y}y^{\mathfrak{s}-1}dy,$ we have
\begin{align*}
\int_{0}^{\infty}\X(ihy)y^{\mathfrak{s}-1}dy &=\int_{0}^{\infty}\sum_{n\geq 0} \X_{[n]}e^{-2\pi y(n+\Lambda)}y^{\mathfrak{s}-1}dy\\
&= (2\pi)^{-\mathfrak{s}}\Gamma(s)L(\X,\mathfrak{s})=\widetilde{\Lambda}(\X,\mathfrak{s}).\
\end{align*}
Theorem~\ref{maintheorem} implies that $\widetilde{\Lambda}(\X,\mathfrak{s})$ is analytic in the region $\mathrm{Re}(\mathfrak{s})> k/2+\alpha.$ This is because, for each component $\X_j$ of $\X$ we have
\begin{align*}
\Big|\int_{0}^{\infty}\X_j(ihy)y^{\mathfrak{s}-1}dy\Big| &\leq \int_{0}^{Y}\Big|\X_j(ihy)\Big|y^{\mathfrak{s}-1}dy+\int_{Y}^{\infty}\Big|\X_j(ihy)\Big|y^{\mathfrak{s}-1}dy\\
&\leq \int_{0}^{Y}|hy|^{-k/2-\alpha}y^{\mathfrak{s}-1}dy+ \int_{Y}^{\infty}\mathrm{exp}(-cy)y^{\mathfrak{s}-1}dy,
\end{align*}
where $c\, (>0)$ is coming from the exponential decay of $\X$ in a neighborhood of $\infty.$ In particular, the second integral is bounded for any $\mathfrak{s}.$ To bound the first integral it is enough to consider the range only from $0$ to $1$ and after performing a change of variable we are left with the integral $\int_{1}^{\infty}y^{k/2+\alpha-\mathfrak{s}-1}dy,$ which converges for $\mathrm{Re}(\mathfrak{s}) >k/2+\alpha.$

Consider $S=\tmt{0}{-1}{1}{0}$ and note that $\Y=\X|_KS,$ is a vvaf for the group $S^{-1}\rG S.$ We now have
\begin{align*}
\int_{0}^{\infty}\Y(ihy)y^{\mathfrak{s}-1}ds &=(hi)^{-k}\int_{0}^{\infty}y^{\mathfrak{s}-k-1}\X(i/hy)dy\\
&=(hi)^{-k}\int_{0}^{\infty}\sum_{n \geq 0}\X_{[n]}e^{-2\pi (n+\Lambda)/h^2y}y^{\mathfrak{s}-k-1}dy\\
&=-(hi)^{-k}\int_{0}^{\infty}\sum_{n \geq 0}\X_{[n]}e^{-2\pi y(n+\Lambda)/h^2}y^{k-\mathfrak{s}-1}dy\\
&=-(hi)^{-k}h^{2k-2s}\widetilde{\Lambda}(\X,k-\mathfrak{s}).
\end{align*}
Moreover, since $\infty$ is a cusp of $S^{-1}\rG S,$ arguing similarly as before, we get an analytic continuation for $\int_{0}^{\infty}\Y(ihy)y^{\mathfrak{s}-1}ds,$ to $\mathrm{Re}(\mathfrak{s})> k/2+\alpha.$ In particular, the completed $L$-function $\widetilde{\Lambda}(\X,\mathfrak{s})$ has analytic continuation to $\mathrm{Re}(\mathfrak{s})< k/2-\alpha$ as well.

\subsection{Logarithmic case} In this more general settings, we define the corresponding $L$-function as
\[L(\X,\mathfrak{s})=\sum_{\lambda \in \Sp}\sum_{j=0}^{m(\lambda)-1}\sum_{n\geq 0}\frac{\X_{[\lambda,j,n]}}{(n+\mu(\lambda))^{\mathfrak{s}+j}}\]
in $\text{Re}(\mathfrak{s})>k/2+\alpha+1,$ and the completed $L$-function
\[\widetilde{\Lambda}(\X,\mathfrak{s})=(2\pi)^{-\mathfrak{s}}\sum_{\lambda\in \Sp}\sum_{j=0}^{m(\lambda)-1}(-1)^{j}\Gamma(s+j)\sum_{n\geq 0}\frac{\X_{[\lambda,j,n]}}{(n+\mu(\lambda))^{\mathfrak{s}+j}}.\]
Following the arguments as in the previous case, we have
\begin{align*}
\int_{0}^{\infty}\X(ihy)y^{\mathfrak{s}-1}dy &=\int_{0}^{\infty}\sum_{\lambda\in \Sp}\sum_{j=0}^{m(\lambda)-1}(-2\pi)^{j}\sum_{n\geq 0} \X_{[\lambda,j,n]}e^{-2\pi y(n+\mu(\lambda))}y^{\mathfrak{s}+j-1}dy\\
&= (2\pi)^{-\mathfrak{s}}\sum_{\lambda\in \Sp}\sum_{j=0}^{m(\lambda)-1}(-1)^{j}\Gamma(\mathfrak{s}+j)\frac{\X_{[\lambda,j,n]}}{(n+\mu(\lambda))^{\mathfrak{s}+j}}\\
&=\widetilde{\Lambda}(\X,\mathfrak{s}),~\text{as~defined~earlier.}
\end{align*}
The logarithmic case of Theorem~\ref{maintheorem} shows that $L(\X,\mathfrak{s})$ converges absolutely in the region $\mathrm{Re}(\mathfrak{s})> k/2+\alpha.$ To show the convergence of $\int_{0}^{\infty}\X(ihy)y^{\mathfrak{s}-1}dy$ in $\text{Re}(\mathfrak{s})>k/2+\alpha$, we basically need to know the growth of $\X(ihy).$ From the proof of logarithmic case of Theorem~\ref{maintheorem} we know that $\norm{\X(iy)}\ll y^{-k/2-\alpha}$ for $y<1,$ and from the logarithmic expansion of $\X,$ we have exponential decay of $\X(iy)$ as $y\to \infty.$ We now obtain the desired convergence following similar arguments as in the admissible case. Now, considering $\Y=\X|_S,$ we have
\begin{align*}
& \int_{0}^{\infty}\Y(ihy)y^{\mathfrak{s}-1}ds =(hi)^{-k}\int_{0}^{\infty}y^{\mathfrak{s}-k-1}\X(i/hy)dy\\
&=(hi)^{-k}\int_{0}^{\infty}\sum_{\lambda\in \Sp}\sum_{j = 0}^{m(\lambda)-1}(-2\pi/h^2)^{j}\sum_{n\geq 0}\X_{[\lambda,j,n]}e^{-2\pi (n+\mu(\lambda))/yh^2}y^{\mathfrak{s}-k-j-1}dy\\
&=-(hi)^{-k}\int_{0}^{\infty}\sum_{\lambda\in \Sp}\sum_{j = 0}^{m(\lambda)-1}(-2\pi/h^2)^{j}\sum_{n\geq 0}\X_{[\lambda,j,n]}e^{-2\pi y(n+\mu(\lambda))/h^2}y^{k+j-\mathfrak{s}-1}dy\\
&=-(hi)^{-k}h^{2k-2\mathfrak{s}}\widetilde{\Lambda}(\X,k-\mathfrak{s}).
\end{align*}
In particular, the completed $L$-function $\widetilde{\Lambda}(\X,\mathfrak{s})$ has analytic continuation to $\text{Re}(\mathfrak{s})\leq k/2-\alpha.$

\begin{note}\rm$\;$ In both cases we are assuming that  $0$ and $\infty$ are cusps of $\rG.$ In general, since $\rG$ is a Fuchsian group of the first kind, $\rG$ has at least two distinct cusps and for each pair of cusps $\cu_1,\cu_2$ of $\rG,$ there exists some $\gamma \in \mathrm{PSL}_2(\mathbb{R})$ such that $\gamma \cu_1=\infty, \gamma \cu_2=0.$ For each such $\gamma,$ let us consider $L_{\cu_1,\cu_2,\gamma}(\X,\mathfrak{s})=L(\X|_k\gamma,\mathfrak{s}).$ From the construction it follows that $\gamma\rG\gamma^{-1}$ contains the cusps $0$ and $\infty.$ Now if $\gamma'$ is any other element sending $\cu_1$ to $\infty$ and $\cu_2$ to $0,$ then $\gamma'\gamma^{-1}$ fixes both $0$ and $\infty.$ Therefore, $\gamma'\gamma^{-1}$ must be of form $\tmt{a}{0}{0}{{1/a}},$ and hence  $\X|_k\gamma'=\X|_k\gamma|_k\tmt{a}{0}{0}{{1/a}}.$ In particular, the Fourier coefficients of $\X|_k\gamma$ and $\X|_k\gamma'$ differs by a scalar factor. We can now fix $\X_{\cu_1,\cu_2}$ to be a vvaf in this family having the first non zero Fourier coefficient of norm $1.$ With this set up we can now finally define $L_{\cu_1,\cu_2}(\X,\mathfrak{s})=L(\X_{\cu_1,\cu_2},\mathfrak{s}),$ and this gives us a (non-empty) family of $L$-functions indexed by pairwise distinct cusps.
\end{note}

\section{Exponential sums and growth on average}\label{sec:app} 
Studying exponential sums associated to arithmetic functions is of great interest in number theory.  When $f$ is a cusp form of weight $k$ and level $N$, by the standard bound on the Fourier coefficients, one can show that
\begin{equation}\label{eq:Sf}S_f(\alpha,X)=\sum_{1\leq n\leq X} f_{[n]} e(n\theta)\ll X^{k/2}\log X,\end{equation}
where we make use of the standard notation $e(z):=e^{2i\pi z}$ and stick to it throughout the section.
The extra $\log$ factor in~\eqr{Sf} was later removed by Jutila~\cite{Jutila}. In this section, we shall first consider the analogous exponential sums for holomorphic vvaf and show that  how our growth result gives a bound of order $X^{\sigma(k/2+\alpha)}\log X.$ Here $\sigma=2$ for the holomorphic forms and $1$ for the cusp forms. In this section, our aim is to study the analogous exponential sums associated to Fourier coefficients of holomorphic vvaf associated to Fuchsian groups of the first kind.  
\subsection{Admissible case} Since we are in the admissible case, write 
\[ \X(z)=\Big(\sum_{n=0}^{\infty} \X_{[i,n]}\tq^{n+\mu(\lambda_i)}\Big)_{0 \leq i\leq m-1}.\]
Let us first consider the exponential sums associated to the components of the Fourier coefficients as
\[S_i(\X,\theta,X)=\sum_{0\leq n< X} \X_{[i,n]}e(n\theta),~0\leq i\leq m-1.\]
For any $y>0,$ we have $\X_{[i,n]}=\frac 1 h\int_{0}^{h}\X_i(\tau)e\left(-\frac{\tau}{h}(n+\mu(\lambda_i))\right)~dx.$ Therefore
\[S_i(\X,\theta,X)=\frac 1 h\int_{0}^{h}\X_i(\tau)e\left(-\frac{\tau}{h}\lambda_i\right)\sum_{0\leq n< X} e\left(n\left(-\frac{\tau}{h}+\theta\right)\right)~dx.\]
The sum in the right hand side is a geometric progression, and this gives us
\[\left|\sum_{0\leq n< X} e\left(n\left(-\frac{\tau}{h}+\theta\right)\right)\right|\ll \left|\frac{1-e\left(X\left(-\frac{\tau}{h}+\theta\right)\right)}{1-e\left(-\frac{\tau}{h}+\theta\right)}\right|.\] 

Note that $|e\left(X\left(-\frac{\tau}{h}+\theta\right)\right)|=e^{2\pi Xy/h}$ and also from Theorem~\ref{maintheorem} we have $|\X_i(\tau)|\ll y^{-\sigma(k/2+\alpha)}$. In particular,
$$|S_i(\X,\theta,X)|\ll y^{-\sigma(k/2+\alpha)}e^{2\pi Xy/h}\frac 1 h\int_0^h\frac{1}{\left|1-e\left(-\frac{\tau}{h}+\theta\right)\right|} ~dx.$$
Replacing $x$ by $x+h\theta$ and using the periodicity of $e\left(-\frac\tau h\right)$,
\begin{align*}
   \int_{0}^{h} \frac{1}{\left|1-e\left(-\frac\tau h+\theta\right)\right|}~dx &= \int_{-h\theta}^{h-h\theta}\frac{1}{\left|1-e\left(-\frac\tau h\right)\right|}~dx
    =\int_{-h/2}^{h/2}\frac{1}{\left|1-e\left(-\frac\tau h\right)\right|}~dx\\
   & \ll\int_{0}^{h/2}\frac{1}{|\frac{\tau}{h}|}~dx
    = h\left(\int_0^y\frac{1}{|\tau|}~dx+\int_y^{h/2}\frac{1}{|\tau|}~dx\right)\\
   & \ll h\left(1 + \log \frac{h}{y}\right).
\end{align*}
Thus,
$$|S_i(\X,\theta,X)| \ll  y^{-\sigma(k/2+\alpha)} e^{2\pi Xy/h}\left(1+ \log\frac{h}{y}\right).$$
One now gets $|S_i(\X,\theta,X)|\ll X^{\sigma(k/2+\alpha)} \log X$ by taking $y=h/X.$

Doing a little more delicate analysis we can obtain a better bound of the Fourier coefficients on average. To be more precise, we shall now give a stronger bound on $\sum_{n\leq X}\norm{\sum \X_{[n]}}^2.$ Before that, let us first discuss the known results for scalar case. When $f$ is a (scalar-valued) cusp form of weight $k$ (and of level $N,$ say) then Rankin~\cite[Theorem 1]{Rankin39} showed that
\[\sum_{1\leq n\leq X}|f_{[n]}|^2=cX^{k}+O(x^{k-2/5}),\]
where $c>0$ is a computable constant. Writing $z=x+iy,$ we have
\begin{align*}
\sum_{0\leq n\leq X} \norm{\X_{[n]}}^2e^{-4\pi ny} &\leq \sum_{0\leq i\leq m-1}  \int_{0}^{h}|\X_i(x+iy)|^2~dx\\
& = \int_{0}^{h}\norm{\X(x+iy)}^2~dx \\
& \ll y^{-2k-4\alpha},
\end{align*}
for any $y>0.$ So in particular, taking $y=\frac{1}{X}$ we obtain for holomorphic forms that $\sum_{0 \leq n\leq X}\norm{\X_{[n]}}^2\ll X^{2k+4\alpha}.$ Similarly for cusp forms we get
\begin{equation}\label{eq:avg2}
    \sum_{0 \leq n\leq X}\norm{\X_{[n]}}^2\ll X^{k+2\alpha},
\end{equation}

\subsection{Logarithmic case} Similarly, we can now consider the exponential sum 
\[S(\X,\theta,X)=\sum_{\lambda\in \Sp}\sum_{j=0}^{m(\lambda)-1}\sum_{0\leq n\leq X}\X_{[\lambda,j,n]}e(n\theta).\]
We can recover $\X_{[\lambda,j,n]}$'s by integration as in the previous case. This again brings us to studying the integrals of form
\[S_{\lambda,j,i}(\X,\theta,X)=\frac{1}{h}\int_{0}^{h}h_{i,j,\lambda}(\tau)e\left(-\frac{\tau}{h}\mu(\lambda)\right)\sum_{0\leq n\leq X}e\left(n\left(-\frac{\tau}{h}+\theta\right)\right)~dx,\] where $h_{i,j,\lambda}$’s are defined in the proof of Lemma~\ref{lem:LC3}. Now to bound this exponential sum, we only need to bound $|h_{i,j,\lambda}(\tau)|$'s. This we can do by~(\ref{eqn:lvaf bound}) in the proof of Theorem~\ref{maintheorem}, and then following the same argument as in the previous case, we have
\begin{equation}\label{eqn:exp}
|S(\X,\theta,X)| \ll X^{\sigma(k/2+\alpha)} \log X,
\end{equation}
where $\sigma=2$ for the holomorphic case and $1$ for the cusp forms. Note that~(\ref{eqn:exp}) also gives us a bound on sum of squares of Fourier coefficients. However, we should aim for a bound like~(\ref{eq:avg2}). That is also possible in this case, because $h_{i,j,\lambda}$'s are $\widetilde{q}$-expansions and we know their growth, due to the logarithmic case of Theorem~\ref{maintheorem}.  


\section*{Acknowledgements}
The authors would like to thank the Georg-August Universit\"at G\"ottingen,  Technische Universit\"at Dresden, Max Planck Institue f\"ur Mathematics, Bonn, and IMPA (Instituto Nacional de Matemática Pura e Aplicada), where much of the work on this article was accomplished, for their hospitality. 

The authors wish to thank Harald Helfgott and Valentin Blomer for their support and encouragement at the initial stage of this work, and would like to extend their thanks to Samuel Patterson for several discussions on the subject during the writing of this article.

The work of the first author is supported by ERC consolidator grants  648329 and 681207, and the second author is supported by ERC consolidator grant 648329.


\nocite{}
 \bibliographystyle{abbrv}
 \bibliography{vvaf}
\end{document}